\theoremstyle{plain}
  \newtheorem{thm}{Theorem}[section]
  \newtheorem{cor}[thm]{Corollary}
  \newtheorem{lem}[thm]{Lemma}
  \newtheorem{prop}[thm]{Proposition}
\theoremstyle{definition}
  \newtheorem{exmp}{Example}
\theoremstyle{remark}
\def\R{\mathbb{R}}
\def\D{\mathcal{D}}
\def\card{\textrm{card}}
\def\I{{\rm 1 \hskip-2.9truept l}}   
\newcommand{\ba}{\mathbf{a}}
\newcommand{\proj}{\mathrm{proj}}
\newcommand{\diag}{\mathrm{diag}}
\newcommand{\dimh}{\dim_{\rm H}}
\newcommand{\dimb}{\dim_{\rm B}}
\newcommand{\bu}{\mathbf{u}}
\newcommand{\bv}{\mathbf{v}}
\newcommand{\w}{\omega} 
\newcommand{\be}{\begin{equation}} 
\newcommand{\ee}{\end{equation}} 
\newcommand{\f}{\infty}
\begin{document}
\title[Dimension theory of Non-Autonomous iterated function systems]{Dimension theory of Non-Autonomous iterated function systems}
\author{Yifei Gu}
\address{Department of Mathematics, East China Normal University, No. 500, Dongchuan Road, Shanghai 200241, P. R. China}

\email{52275500012@stu.ecnu.edu.cn}

\author{Jun Jie Miao}
\address{Department of Mathematics, East China Normal University, No. 500, Dongchuan Road, Shanghai 200241, P. R. China}

\email{jjmiao@math.ecnu.edu.cn}

\begin{abstract}
In the paper, we define a class of  new fractals named ``non-autonomous attractors", which are the generalization of classic Moran sets and attractors of iterated function systems. Simply to say, we replace the similarity mappings by contractive mappings and remove the separation assumption in Moran structure. We give the dimension estimate for non-autonomous attractors.

Furthermore, we study a class of non-autonomous  attractors, named `` non-autonomous affine sets or affine sets'', where the contractions are restricted to affine mappings. To study the dimension theory of such fractals,  we define two critical values $s^*$ and $s_A$, and the upper box-counting dimensions and Hausdorff dimensions of non-autonomous affine sets are bounded above by $s^*$ and $s_A$, respectively. Unlike self-affine fractals where $s^*=s_A$, we always have that $s^*\geq s_A$, and the inequality may strictly hold.

Under certain conditions, we obtain that the upper box-counting dimensions and Hausdorff dimensions of non-autonomous affine sets may equal to $s^*$ and $s_A$, respectively. In particular, we  study non-autonomous affine sets with random translations, and  the Hausdorff dimensions of such sets equal to $s_A$ almost surely.

\end{abstract}

\maketitle

\section{Introduction}

\subsection{Self-affine sets}
Let $\{\Psi_{i}\}_{1\leq i\leq M}$ be a finite set of affine contractions on
$\R^{d}$ with $M\geq 2$  and
\begin{equation}
    \Psi_{i}(x) = T_{i}(x) +a_{i} \qquad (1\leq i\leq M), \label{ifs}
\end{equation}
where $a_{i}$ is a translation vector, and $T_i$ is a linear transformation on $\R^{d}$. The set $\{\Psi_{i}\}_{1\leq i\leq M}$ is called a {\em self-affine  iterated function system}.  By the well-known theorem of
Hutchinson, see~\cite{Bk_KJF2,Hutch81}, the IFS has a unique
{\em attractor}, that is a unique non-empty compact $E \subset \R^{d}$
such that
\begin{equation}\label{saattractor}
E = \bigcup_{1\leq i\leq M} \Psi_{i}(E),
\end{equation}
which is called  a {\em self-affine set}. If the affine transformations $\Psi_i$ are similarity mappings, we call $E$ a {\em self- similar set}, see~\cite{Bk_KJF2} for details.

Formulae giving the Hausdorff and box-counting dimensions of self-similar sets satisfying the open set condition are well-known, see~\cite{Bk_KJF2}. However, calculation of the dimensions
of self-affine sets is more awkward, see~\cite{Bedfo84,Falco13, FH08,HYH94,McMul84,PerSo00}. For
each $k=1,2,\cdots$, we write
$\mathcal{I}^{k}=\{i_{1}i_{2}\cdots i_{k}:1\leq i_{j}\leq
M,\ j\leq k\}$
for the set of words of length $k$. Let $\phi^s$ be the singular value function defined by ~\eqref{def svf}.
Falconer in  \cite{Falco88} defined the criticla value $d(T_1,\ldots,T_M)$ which is  the unique solution to
$$
\lim_{k\to\infty}\left(\sum_{\mathbf{i}\in\mathcal{I}^{k}} \phi^s(T_\mathbf{i})\right)^{\frac{1}{k}}=1,
$$
and it is often called  \textit{affine dimension} or \textit{Falconer dimension}.
Given  $\|T_i\|<\frac{1}{2}$ for $i=1,2,\ldots M,$ it turns out that
\begin{equation}\label{sadimf}
\dimh F = \dimb F =\min \{d, d(T_1, \ldots, T_M)\},
\end{equation}
for almost all $\ba=(a_1,\ldots,a_M)\in\R^{Md}$(in the sense of $Md$-dimensional Lebesgue measure), we refer the readers to~\cite{Falco88, Solom98} for details. Note that $d(T_1, \ldots, T_M)$ is always the upper bound for the box-counting dimension of self-affine sets,  Falconer in \cite{Falco92} proved that the box-counting dimension and affine dimension coincide by applying projection condition with other assumptions.  From then on, a considerable amount of literature has been published on the validation of formula~\eqref{sadimf} under various conditions. In particular,  in~\cite{JPS07}, Jordan, Pollicott and Simon studied perturbed self-affine sets where they changed translations into independently identically distributed random variables, and they showed that for  $\|T_i\|<1$, $i=1,2,\ldots M,$ the dimension formula~\eqref{sadimf} holds almost surely.  In~\cite{FK18}, Falconer and Kempton showed the dimension formula~\eqref{sadimf} holds for all translations in $R^2$ under various assumptions. We refer readers to~\cite{BHR19,Falco92,Falco05,Falco10, Feng23, FJJ, HocRap, HYH98, HueLal95, MorShm} for various related studies.

\subsection{Moran sets}
Moran sets were first studied by Moran in~\cite{Moran}, and we recall the definition for the readers’ convenience.

Let $\{n_{k}\}_{k\geq 1}$ be a sequence of integers greater than or equal to $2$. For
each $k=1,2,\cdots$, we write
\begin{equation}\label{defsigmak}
\Sigma^{k}=\{u_{1}u_{2}\cdots u_{k}:1\leq u_{j}\leq
n_{j},\ j\leq k\}
\end{equation}
for the set of words of length $k$, with $
\Sigma^{0}=\{\emptyset \}$ containing only the empty word $\emptyset$, and write
\begin{equation}\label{defsigma*}
\Sigma^{*}=\bigcup _{k=0}^{\infty }\Sigma^{k}
\end{equation}
for the set of all finite words.

Suppose that $
J\subset \mathbb{R}^{d}$ is a compact set with $\mbox{int}(J)\neq
\varnothing $ (we always write int($\cdot )$ for the interior of a set). Let $\{\Xi_{k}\}_{k\geq 1}$ be a sequence of positive real vectors where $\Xi_{k}=(c_{k,1},c_{k,2},\cdots, c_{k,n_{k}})$ for each $k\in \mathbb{N}$. We say the
collection $\mathcal{J}=\{J_{\mathbf{u}}:\mathbf{u}\in \Sigma^*\}$ of closed
subsets of $J$ fulfils the \textit{Moran structure} if it satisfies the
following Moran structure conditions (MSC):
\begin{itemize}
  \item[(1).] For each $\mathbf{u}\in \Sigma^*$, $J_{\mathbf{u}}$ is geometrically similar to
$J$, i.e., there exists a similarity $\Psi_{\mathbf{u}}:\mathbb{R}%
^{d}\rightarrow \mathbb{R}^{d}$ such that $J_{\mathbf{u}}=\Psi_{\mathbf{u}}(J)$%
. We write $J_{\emptyset }=J$ for the empty word $\emptyset $.
  \item[(2).] For all $k\in \mathbb{N}$ and $\mathbf{u}\in \Sigma^{k-1}$, the elements $J_{%
\mathbf{u}1}, J_{\mathbf{u}2},\cdots, J_{\mathbf{u}n_{k}}$ of $\mathcal{J}$
are the subsets of $J_{\mathbf{u}}$ with disjoint interiors, ie., $\mbox{int}%
(J_{\mathbf{u}i})\cap \mbox{int}(J_{\mathbf{u}i^{\prime }})=\varnothing $
for $i\neq i^{\prime }$. Moreover, for all $1\leq i\leq n_{k} $,
\begin{equation*}
\frac{|J_{\mathbf{u}i}|}{|J_{\mathbf{u}}|}=c_{k,i},
\end{equation*}%
where $|\cdot |$ denotes the diameter.
\end{itemize}
The non-empty compact set
$$
E=E(\mathcal{J})=\bigcap\nolimits_{k=1}^{\infty }\bigcup\nolimits_{%
\mathbf{u}\in \Sigma^{k}}J_{\mathbf{u}}
$$
is called a \textit{Moran set} determined by $%
\mathcal{J}$. For each $\mathbf{u}\in \Sigma^{k}$, the element $J_{\mathbf{u}}$ is  called a \textit{\ $k$th-level basic set} of $E$.
For each integer $k> 0,$ let $d_k$ be the unique real solution
of the equation
\begin{equation}  \label{eqns}
\prod\nolimits_{i=1}^{k}\left(
\sum\nolimits_{j=1}^{n_{i}}(c_{i,j})^{d_k}\right)=1.
\end{equation}
Let $d_{\ast }$ and $d^{\ast }$ be the real numbers given respectively  by
\begin{equation}\label{defd}
d_{\ast }=\liminf_{k\rightarrow \infty }d_k, \quad
d^{\ast }=\limsup_{k\rightarrow \infty }d_k.
\end{equation}
It was shown in \cite{HL96, Wen00,Wen01} that if $c_*=\inf \{c_{k,j} :  k\in \mathbb{N}, 1\leq j\leq n_k  \}>0,$ the following dimension formulae hold
\begin{equation*}
\dimh E=d_{\ast }, \quad \dim _{\rm P}E={\overline{\dim}}_{\rm B} E=d^{\ast }.
\end{equation*}

The dimension theory of Moran sets has been studied extensively, and we refer the readers to~\cite{HL96, Wen00, Wen01} for detail and references therein. Note that, in the definition of  Moran sets, the position of $J_{\mathbf{u}i}$ in $J_{\mathbf{u}}$ is very flexible, and the contraction ratios may also vary at each level. Therefore the structures of Moran sets are more complex than self-similar sets,  and in general, the inequality
$$
\dimh E\leq \underline{\dim} _{\rm B}E \leq {\overline{\dim}}_{\rm B} E
$$
holds strictly for Moran fractals.
The general lower box dimension formula for Moran sets is still an open question. Except providing various examples, Moran sets are also useful tools for analysing properties of fractal sets in various studies, for example, see~\cite{Wen00} and references therein for applications.

Note that similarities and separation assumption are required in the Moran structure,  Inspired by the structure of iterated function systems,  we generalize the Moran structure to non-autonomous structure where we replace similarities by contractions or  affine mappings, and remove the separation assumption.  The geometric properties of non-autonomous sets are more complex than Moran sets since mappings may have different contraction ratios in different directions. Therefore, non-autonomous sets provide not only interesting phenomena but also useful tools for fractal analysis. Different to the attractors of iterated function systems, non-autonomous sets do not have dynamical properties any more, and the tools in ergodic theory cannot be invoked. As a result, it is more difficult to explore their dimension formulae and other fractal properties.

In this paper, we investigate the dimension theory of non-autonomous attractors, and particularly, we are interested in a classs of attractors, named ``non-autonomous affine sets or affine sets''.  In section \ref{sec 2}, we first give the definition of  non-autonomous attractors and estimate the dimensions of such sets, then we study the non-autonomous affine sets and state the main conclusions in subsection \ref{subMC}.  The dimension estimates for non-autonomous sets are provided in section~\ref{sec_NAS}. Upper and lower box dimensions of non-autonomous affine sets are explored in section~\ref{secPB}. We discuss the upper bounds of Hausdorff dimensions of non-autonomous affine sets in section~\ref{secH}, and we give the  Hausdorff dimension formula for a spectial class of non-autonomous affine sets with finitely many translations. In section~\ref{secASA}, we study the Hausdorff dimension of non-autonomous affine sets with random translations, and we prove that with probability one, the Hausdorff dimensions of such sets equal to a critical value. Finally, in section \ref{sec_CV}, we compare  critical values of non-autonomous affine sets, and we also provide some examples to illustrate our conclusions.

\section{Non-autonomous iterated function systems}\label{sec 2}

\subsection{Definition of non-autonomous iterated function systems}
Let $\{n_{k}\}_{k=1}^\infty$ be a sequence of integers such that $n_k\geq 2$ for all $k\geq 1$. Let $\Sigma^{k}$ and $\Sigma^{*}$ be given by \eqref{defsigmak} and \eqref{defsigma*}, respectively. Suppose that $J\subset \R^d$ is a compact set with non-empty interior.
Let $\{\Xi_k\}_{k=1}^\infty$ be a sequence of collections of contractive mappings, that is
\begin{equation}\label{Xi}
\Xi_k=\{S_{k,1},S_{k,2},\ldots,S_{k,n_k}\},
\end{equation}
where each $S_{k,j}$ satisfies that
$|S_{k,j}(x)-S_{k,j}(y)| \leq c_{k,j} |x-y| $ for some $0<c_{k,j}<1$.
We say the collection $\mathcal{J}=\{J_{\mathbf{u}}:\mathbf{u}\in \Sigma^*\}$ of closed subsets of $J$ fulfils the \textit{non-autonomous structure with respect to $\{\Xi_k\}_{k=1}^\infty$} if it satisfies the following conditions:
\begin{itemize}
\item[(1).] For all integers $k>0$ and all $\mathbf{u}\in \Sigma^{k-1}$, the elements $J_{\mathbf{u}1}, J_{\mathbf{u}2},\cdots, J_{\mathbf{u}n_{k}}$ of $\mathcal{J}$ are the subsets of $J_{\mathbf{u}}$. We write $J_{\emptyset }=J$ for the empty word $\emptyset $.
\item[(2).] For each $\mathbf{u}=u_1\ldots u_k \in \Sigma^*$,  there exists an  transformation $\Psi_\mathbf{u}: \mathbb{R}^{d}\rightarrow \mathbb{R}^{d}$  such that
    $$
    J_{\mathbf{u}}=\Psi_{\mathbf{u}}(J)=\Psi_{u_1}\circ \ldots \circ \Psi_{u_j} \ldots \circ \Psi_{u_k} (J),
    $$
    where $\Psi_{u_j}(x)=S_{j,u_j}x+ \w_{u_1\ldots u_j}$, for some $\w_{u_1\ldots u_j} \in \R^d$, and $S_{j,u_j}\in \Xi_j$, $j=1,2,\ldots k$.
\item[(3).]The maximum of the diameters of $J_\bu$ tends to 0 as $|\bu|$ tends to $\infty$, that is,
    $$
    \lim_{k\to \infty} \max_{\bu\in \Sigma^k} |J_\bu|=0.
    $$

\end{itemize}
We call $\{\Xi_k\}_{k=1}^\infty$ \textit{ a non-autonomous iterated function systems(NIFS)} determined by $\mathcal{J}$, and the non-empty compact set
\begin{equation}\label{attractor}
E=E(\mathcal{J})=\bigcap\nolimits_{k=1}^{\infty }\bigcup\nolimits_{\mathbf{u}\in \Sigma^{k}}J_{\mathbf{u}}
\end{equation}
is called a \textit{non-autonomous attractor} determined by $\mathcal{J}$.  For all $\mathbf{u}\in \Sigma^{k}$, the elements $J_{\mathbf{u}}$ are called \textit{\ $k$th-level basic sets} of $E$.  If the non-autonomous attractor $E$ satisfies  that for all integers $k\geq 1$ and $\mathbf{u}\in \Sigma^{k-1}$,
$$\mathrm{int} (J_{\bu i})\cap \mathrm{int} (J_{\bu i'}) = \emptyset  \quad \textit{ for } i\neq i'\in\{1,2,\ldots, n_k\},
$$
we say $E$ satisfies \textit{Moran separation condition} (MSC).

\vspace{0.2cm}
\noindent\textit{Remark} (1) Overlap is allowed in the  structure, and Moran sets are special non-autonomous attractors satisfying Moran separation condition. Note that  Moran sets are always uncountable, but the cardinality of non-autonomous attractors sets may be finite, countable or uncountable even if  Moran separation condition is satisfied, see Example \ref{SMfnt} and Example \ref{SMcntb} in section \ref{sec_CV}.

(2) In the definition, the transformation $\Psi_\bu$ may be determined in the following way.
    For each $\bu=u_1\in \Sigma^1$ and $J_\bu\in \mathcal{J}$, there exists an mapping $\Psi_{\bu}:J\to J_\bu$ such that
      $$
      J_{\bu}=\Psi_{\bu}(J)=S_{1,u_1}(J)+\w_{u_1} \qquad\qquad \textit{ for some } \w_{u_1} \in \R^d.
      $$
      Suppose that for each $\bu\in \Sigma^{k-1}$  and $J_\bu\in \mathcal{J}$, the affine mapping $\Psi_{\bu}:J\to J_\bu$ is defined. For each $1\leq j\leq n_k$ and $J_{\bu j}\in \mathcal{J}$, there exists an affine mapping $\Psi_{\bu j}:J\to J_{\bu j}$ such that
      $$
      J_{\bu j}=\Psi_{\bu j}(J)=\Psi_{\mathbf{u}}\circ \Big(S_{k,j}(J)+\w_{\bu j} \Big) \qquad \textit{ for some } \w_{\bu j} \in \R^d.
      $$

(3) The assumption $\lim_{k\to \infty} \max_{\bu\in \Sigma^k} |J_\bu| = 0$ in the definition is necessary, otherwise the set $E$ may have positive finite Lebesgue measure, and we do not consider such case in this paper, see Example ~\ref{exp1} in section \ref{sec_CV}.

(4) Non-autonomous iterated function system is  also a generalization of the iterated function system. However, for $\bu\neq\bv\in\Sigma^{k-1}$,  the transformations $S_{k,j}(x)+\w_{\bu j} $ and $S_{k,j}(x)+\w_{\bv j} $ in $\Psi_{\bu j}$ and $\Psi_{\bv j}$ have the same  contractive part $S_{k,j}\in \Xi_k$ but with different translations.  Therefore  the non-autonomous attractor $E$ may  be different to the attractor of classic iterated function system even if the sequence $\{\Xi_k\}$ is identical, that is, $n_k =M$ and $\Xi_k=\{S_1, \ldots,S_M\}$ for all $k>0$.

For general non-autonomous sets, it is difficult to find their fractal dimensions, but  we are still able to  provide some rough estimates if contraction ratios are known.
\begin{thm}\label{dimub}
Let $E$ be the non-autonomous attractor given by~\eqref{attractor}. For each integer $ k\geq 1$, we assume that
$$
|S_{k,i}(x)-S_{k,i}(y)| \leq c_{k,i}|x-y|, \qquad (x,y)\in J
$$
where $c_{k,i}<1$, for all $i=1,2,\ldots, n_k$.  Let $d_*$ and $d^*$ be given by ~\eqref{defd}. Then
$$\dimh E \leq \min\{d_*, d\}.$$
Furthermore, if $c_*=\inf \{c_{k,j} :  k\in \mathbb{N}, 1\leq j\leq n_k  \}>0$, we have that
$$
\overline{\dim}_{\rm B} E \leq \min\{ d^*, d\}.
$$
\end{thm}

We next obtain a lower bound for Hausdorff and box-counting dimensions in the case where  the basic sets of $E$ satisfy the following condition.
We say the non-autonomous set $E$ satisfies {\it gap separation condition(GSC)} if there exists a constant $C$ such that for all $\bu\in\Sigma^*$, $i\neq j$, we have that
$$
\inf\{|x-y|: x\in J_{\bu i}, y\in J_{\bu j }\}\geq C|J_\bu|
$$

\begin{thm}\label{dimlb}
Let $E$ be the non-autonomous attractor given by~\eqref{attractor} with gap separation condition satisfied. For each integer $ k\geq 1$, we assume that
$$
|S_{k,i}(x)-S_{k,i}(y)| \geq c_{k,i}|x-y|, \qquad (x,y)\in J
$$
where $c_{k,i}<1$,  for all $i=1,2,\ldots, n_k$. Let $d_*$ and $d^*$ be given by ~\eqref{defd}.Then
$$
\dimh E \geq d_* ,\qquad  \qquad\overline{\dim}_{\rm B} E \geq d^*.
$$
\end{thm}
Note that if the contractions are all similarities in the NIFS $\{\Xi_k\}_{k=1}^\infty$, then $c_*>0$ is frequently used to  guarranttee the lower bounds in Theorem~\ref{dimlb}. For general NIFSs,  $c_*>0$ is not sufficient for the theorem, see Example~\ref{SMcntb} for a counterexample.

To study the dimension properties of non-autonomous sets, the following notations are frequently used in our context.
Let $\Sigma^{k}$ and $\Sigma^{*}$ be given by \eqref{defsigmak} and \eqref{defsigma*}, respectively. Let $\Sigma^\infty= \{(u_1u_2 \ldots u_k\ldots) : 1 \leq  u_k \leq n_k\}$ be the corresponding set of infinite words, where $\{n_{k}\geq 2\}_{k\geq 1}$ is the sequence of integers.

For $\bu=u_1\cdots u_k\in\Sigma^k$, we write $\bu^-=u_1\ldots u_{k-1}$ and  write $|\bu|=k$ for the length of $\bu$. For each $\bu = u_1 u_2\cdots u_k\in
\Sigma^{*}$, and $\bv = v_1 v_2\cdots \in \Sigma^{\infty}$,   we say $\mathbf{u}$ is a \textit{curtailment} of $\bv$, denote by  $\mathbf{u} \preceq \bv$, if $\mathbf{u} = v_1\cdots v_k =\bv|k$. We call the set $\mathcal{C}_\bu =
\{\bv\in\Sigma^{\infty} : \bu \preceq\bv\}$ the \textit{cylinder} of $\mathbf{u}$, where $\bu\in \Sigma^*$. If $\mathbf{u}=\emptyset$, its cylinder is $\mathcal{C}_\bu=\Sigma^{\infty}$.

For $\bu, \bv\in \Sigma^\infty$, let $\bu\wedge \bv\in \Sigma^*$ denote the maximal common initial finite word of both $\bu$ and $\bv$.
We topologise $\Sigma^\infty$  using the metric
$d(\bu,\bv) = 2^{-|\bu \wedge \bv |}$ for distinct $\bu,\bv \in \Sigma^\infty$ to make $\Sigma^\infty$  a compact metric space.
The cylinders
$\mathcal{C}_\bu = \{\bv \in \Sigma^\infty : \bu \preceq \bv \}$ for  $\bu \in \Sigma^*$ form a base
of open and closed neighbourhoods for $\Sigma^\infty$.
We call a set of finite words $A \subset \Sigma^*$ a \textit{covering set} for $\Sigma^\infty$ if $\Sigma^\infty \subset \bigcup_{\mathbf{u}\in A}\mathcal{C}_\bu$.

For $\bv=v_1\ldots v_k\in \Sigma^k$, we denote compositions of mappings by $S_\mathbf{v}= S_{1,v_1}\ldots S_{k,v_k}$. Let $\Pi: \Sigma^\infty \rightarrow \mathbb{R}^d$ be the projection given by
\begin{eqnarray}
\Pi(\bu) & =& \bigcap_{k=0}^\infty (S_{1,u_1} + \w_{u_1})  (S_{2,u_2} + \w_{u_1u_2}) \cdots (S_{k,u_k} + \w_{\bu|k})(J)  \nonumber\\
& =& \lim_{k \to \infty} (S_{1,u_1} + \w_{u_1})  (S_{2,u_2} + \w_{u_1u_2}) \cdots (S_{k,u_k} + \w_{\bu|k})(x)\label{points2}\\
& =&  \w_{u_1} +S_{1,u_1}\w_{u_1u_2}+ \cdots +  S_{\bu|k}\w_{\bu|k+1}  + \cdots.  \nonumber
\end{eqnarray}
It is clear that the attractor $E$ is the image of $\Pi$, i.e. $E=\Pi(\Sigma^\infty)$.  Note that the projection $\Pi$ is surjective. To emphasize the dependence on translations, we sometimes write $\Pi^\w(\bu)$ and $E^\w$ instead of $\Pi(\bu)$ and $E$.

Let $\mu$  be a finite Borel regular measure on $\Sigma^\infty$. We define  $\mu^\w$, the projection of the  measure $\mu$ onto $\R^d$, by
\begin{equation}
 \mu^\w(A)=\mu\{\bu: \Pi^\w(\bu)\in A\},                    \label{def_ua}
\end{equation}
for $A\subseteq\mathbb{R}^d$, or equivalently by
\be \int
f(x)d\mu^\w(x)=\int f(\Pi^\w(\bu))d\mu(\bu),  \label{def_ua2} \ee
for every continuous  $f:\R^d\to \mathbb{R}$. Then $\mu^\w$ is a Borel
measure supported by $E^{\w}$.

\subsection{Non-autonomous affine iterated function systems}
Let $\{\Xi_k\}_{k=1}^\infty$ be a sequence of collections of contractive matrices, that is
\begin{equation}\label{Xi}
\Xi_k=\{T_{k,1},T_{k,2},\ldots,T_{k,n_k}\},
\end{equation}
where
$T_{k,j}$ are $d\times d$ matrices with $\|T_{k,j}\|<1$ for $j=1,2,\ldots n_k$. The collection $\mathcal{J}=\{J_{\mathbf{u}}:\mathbf{u}\in \Sigma^*\}$ of closed subsets of $J$ fulfils the non-autonomous structure with respect to the sequence $\{\Xi_k\}_{k=1}^\infty$ of contractive matrices. We call $\{\Xi_k\}_{k=1}^\infty$  the \textit{non-autonomous affine iterated function system (NAIFS)} determined by $\mathcal{J}$, and we  call the attractor $E$ given by \eqref{attractor}
a \textit{non-autonomous affine set(NAS) or affine set} determined by $\mathcal{J}$.

Note that Moran sets may be regarded as a special case of non-autonomous affine sets satisfying Moran separation condition, where $T_{k,i}$ is the multiplication of contraction ratio $c_{k,i}$ and  a rotation matrix $A_{k,i}$.


Let $T\in\mathcal{L}(\R^d,\R^d)$ be a contracting and non-singular linear mapping. The {\em singular values} $\alpha_1(T),\alpha_2(T),\ldots$, $\alpha_d(T)$ of $T$ are the lengths of the (mutually perpendicular) principle semi-axes of $T(B)$, where $B$ is the unit ball in $\R^d$. Equivalently they are the positive square roots of the eigenvalues of $T^\ast T$, where
$T^\ast$ is the transpose of $T$. Conventionally, we write that  $1>\alpha_1(T)\geq \ldots\geq \alpha_d(T)>0$.

For $0\leq s \leq d$, the \textit{singular value function} of $T$ is defined by
\begin{equation}\label{def svf}
\phi^s(T)=\alpha_1(T)\alpha_2(T)\ldots \alpha_{m-1}(T) \alpha_m^{s-m+1}(T),
\end{equation}
where $m$ is the integer such that $m-1<s\leq m$. For technical convenience, we set $\phi^s(T)= (\alpha_1(T)\alpha_2(T)\ldots \alpha_d(T))^{s/d}=(\det T)^{s/d}$ for $s > d$. It is clear that $\phi^s(T)$ is continuous and strictly decreasing in $s$. The singular value function is submultiplicative, that is, for all $s\geq 0$,
\begin{equation}\label{subphi}
\phi^s(TU)\leq \phi^s(T)\phi^s(U),
\end{equation}
for all $T,U\in \mathcal{L}(\R^d,\R^d)$, see ~\cite{Falco88} for details.

Let $\{\Xi_k\}_{k=1}^\infty$ be given by \eqref{Xi}. We write
\begin{eqnarray}\label{alpha}
\alpha_+&=&\sup\{\alpha_1(T_{k,i}) : 1\leq i\leq n_k  \textit{,  } k \in \mathbb{N}\}, \\
\alpha_-&=&\inf\{\alpha_d(T_{k,i}) : 1\leq i\leq n_k  \textit{,  } k \in \mathbb{N}\}. \nonumber
\end{eqnarray}
Immediately, for each $\bu\in \Sigma^k$, the singular value function $\phi^s$ of $T_\bu$ is bounded by
\begin{equation}\label{alhpha+-}
\alpha_-^{sk}\leq \phi^s(T_\bu) \leq \alpha_+^{sk}.
\end{equation}

Note that for  a self-affine set $E$, the sequence $\{\sum_{\bu\in\mathcal{I}^{k}}\phi^s(T_\bu)\}_{k=1}^\infty$ is submultiplicative, and this implies that the function
$$
p(s)=\lim_{k\to\infty}\left(\sum_{\bu\in\mathcal{I}^{k}} \phi^s(T_\bu)\right)^{\frac{1}{k}}
$$
is continuous and strictly decreasing in $s$.
This property plays an important role in finding the affine dimensions of self-affine fractals. However, in general, the submultiplicative property does not hold for non-autonomous affine sets. The lack of submultiplicativity causes one of the main difficulties to determine the dimensions of non-autonomous affine sets.

\subsection{Main conclusions for non-autonomous affine sets}\label{subMC}
Given $\{n_{k}\geq 2\}_{k=1}^\infty$. Let $\{\Xi_k\}_{k=1}^\infty$ be the NAIFS~\eqref{Xi} and  $E$ be the corresponding non-autonomous affine set defined by\eqref{attractor}.             From now on, we always assume that the matrices in $\Xi_k$ for all $ k \in \mathbb{N}$ are nonsingular, and
\begin{equation} \label{aspalpha}
0< \alpha_- \leq   \alpha_+<1.
\end{equation}

For each $s>0$ and $0<\epsilon <1$, let $m$ be the integer such that $m-1<s\leq m$ and define
$$
\Sigma^*(s, \epsilon)=\{\mathbf{u}=u_1\ldots u_k \in \Sigma^*: \alpha_m(T_\mathbf{u})  \leq\epsilon < \alpha_m(T_{\bu^-}) \}.
$$
The set $\Sigma^*(s,\epsilon)$ is a \textit{cut-set} or \textit{stopping} in the sense  that for every $\mathbf{u}\in \Sigma^\infty$, there is a unique integer $k$ such that $ \mathbf{u}|k\in \Sigma^*(s, \epsilon)$. For each $\mathbf{u}\in \Sigma^*(s,\epsilon)$, by \eqref{subphi} and \eqref{alpha},  we have that
$$
\alpha_-\epsilon < \alpha_m(T_\mathbf{u}) < \epsilon.
$$
We define
\begin{equation}\label{defs*}
s^*=\inf\Big\{s: \limsup_{\epsilon\to 0}\sum_{\bu\in\Sigma^*(s, \epsilon)}\phi^s(T_\bu)<\infty\Big\}.
\end{equation}

The critical value $s^*$ plays a key role in the box dimensions of non-autonomous affine sets. The following theorem shows that $s^*$ is an upper bound for the upper box dimension of $E$.

\begin{thm}\label{dimBU}
Let $E$ be the non-autonomous affine set given by ~\eqref{attractor}. Then
$$
\overline{\dim}_{\rm B} E \leq \min\{s^*,d\}.
$$
\end{thm}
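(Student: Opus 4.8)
The plan is to bound the upper box dimension by constructing, for each small $\epsilon>0$, an efficient cover of $E$ by sets of diameter comparable to $\epsilon$, using the cut-set $\Sigma^*(s,\epsilon)$. Fix $s>s^*$ with $s\le d$ (the case $s^*\ge d$ being trivial since box dimension is always at most $d$), and let $m$ be the integer with $m-1<s\le m$. For each $\bu\in\Sigma^*(s,\epsilon)$ the basic set $J_\bu=\Psi_\bu(J)$ is contained in an image $T_\bu(J)+(\text{translation})$, which is (roughly) an ellipsoid with semi-axes $\alpha_1(T_\bu)\ge\cdots\ge\alpha_d(T_\bu)$; by the defining property of the cut-set we have $\alpha_m(T_\bu)\le\epsilon$, while $\alpha_1(T_\bu)\le\alpha_+^{|\bu|}$ need not be small, so the ellipsoid is long in the first $m-1$ directions and has width $\le\epsilon$ in the remaining $d-m+1$ directions. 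The standard device is to slice $T_\bu(J)$ into cubes of side $\epsilon$: since $|J|$ is fixed, $T_\bu(J)$ can be covered by at most a constant multiple of
$$
\prod_{i=1}^{m-1}\Bigl(\frac{\alpha_i(T_\bu)}{\epsilon}+1\Bigr)\ \le\ C\,\epsilon^{-(m-1)}\,\alpha_1(T_\bu)\cdots\alpha_{m-1}(T_\bu)
$$
sets of diameter $\le C'\epsilon$, where we used $\alpha_i(T_\bu)\ge\alpha_m(T_\bu)>\alpha_-\epsilon$ so that $\alpha_i/\epsilon+1\le (1+\alpha_-^{-1})\alpha_i/\epsilon$. (Here the constants depend only on $d$, $|J|$, and $\alpha_-$, not on $\epsilon$ or $\bu$.) Since $\alpha_m(T_\bu)\le\epsilon$, we have $\alpha_1(T_\bu)\cdots\alpha_{m-1}(T_\bu)\le \epsilon^{m-1-s}\,\alpha_1(T_\bu)\cdots\alpha_{m-1}(T_\bu)\alpha_m(T_\bu)^{s-m+1}\epsilon^{-(s-m+1)}\cdot\epsilon^{s-m+1}$; more cleanly, $\alpha_1\cdots\alpha_{m-1}=\phi^s(T_\bu)\,\alpha_m(T_\bu)^{-(s-m+1)}\le \phi^s(T_\bu)\,(\alpha_-\epsilon)^{-(s-m+1)}$. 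Therefore the number of $\epsilon$-mesh cubes needed to cover $J_\bu$ is at most $C''\,\epsilon^{-s}\,\phi^s(T_\bu)$.

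Summing over the cut-set, and using $E\subset\bigcup_{\bu\in\Sigma^*(s,\epsilon)}J_\bu$ (valid because $\Sigma^*(s,\epsilon)$ is a covering set for $\Sigma^\infty$ and $E=\Pi(\Sigma^\infty)$), we obtain a cover of $E$ by at most
$$
N_{C''\epsilon}(E)\ \le\ C''\,\epsilon^{-s}\sum_{\bu\in\Sigma^*(s,\epsilon)}\phi^s(T_\bu)
$$
sets of diameter $\le C'\epsilon$. By the definition \eqref{defs*} of $s^*$ and the choice $s>s^*$, the sum $\sum_{\bu\in\Sigma^*(s,\epsilon)}\phi^s(T_\bu)$ stays bounded as $\epsilon\to0$ (along the $\limsup$; one takes a sequence $\epsilon_j\to0$ realising the finite $\limsup$, or simply notes that by monotonicity in $s$ one may enlarge $s$ slightly so that the $\limsup$ is finite and hence the quantity is bounded for all small $\epsilon$). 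Hence $\log N_{C'\epsilon}(E)\le -s\log\epsilon+O(1)$, which gives $\overline{\dim}_{\rm B}E\le s$. Letting $s\downarrow s^*$ yields $\overline{\dim}_{\rm B}E\le s^*$, and combining with the trivial bound $\overline{\dim}_{\rm B}E\le d$ finishes the proof.

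The main point requiring care is the covering-count for a single affine image $J_\bu$: one must justify that an ellipsoidal (or parallelepiped) image of the fixed compact set $J$ with semi-axes $\alpha_1(T_\bu)\ge\cdots\ge\alpha_d(T_\bu)$, all of which are at least $\alpha_-\epsilon$ while the smallest $d-m+1$ of them are at most $\epsilon$, can be covered by $\lesssim \prod_{i<m}(\alpha_i(T_\bu)/\epsilon)$ cubes of side $\epsilon$ — this is the usual "box counting of an ellipsoid" estimate, and the only subtlety is bookkeeping the universal constants so they do not depend on $\bu$ or $\epsilon$; the hypothesis $\alpha_->0$ in \eqref{aspalpha} and the fixed diameter $|J|<\infty$ are exactly what make the constants uniform. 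A secondary technical point is the passage from the $\limsup$ in \eqref{defs*} to a genuine uniform-in-$\epsilon$ bound, handled by exploiting continuity/monotonicity of $s\mapsto\phi^s$ to replace $s^*$ by $s^*+\delta$.
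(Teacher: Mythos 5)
Your proposal is correct and follows essentially the same route as the paper: cover $E$ by the basic sets indexed by the cut-set $\Sigma^*(s,\epsilon)$, bound the number of $\epsilon$-cubes needed for each affine image by a constant times $\epsilon^{-s}\phi^s(T_\bu)$ using $\alpha_-\epsilon<\alpha_m(T_\bu)\le\epsilon$, and sum. The only divergence is at the end: where the paper runs a two-case analysis (limsup finite or infinite at $s^*$, exploiting that $\Sigma^*(s,\epsilon)$ is constant on $(m-1,m]$), you simply take $s$ slightly above $s^*$ lying in the set where the limsup is finite --- such $s$ exist arbitrarily close to $s^*$ by the infimum definition, so your shortcut is legitimate (though your first parenthetical about choosing a subsequence $\epsilon_j$ is unnecessary, since a finite limsup already gives a bound for all small $\epsilon$).
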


Suppose that the matrices in $\Xi_k$ for all $k>0$ are scalar matrices, that is $T_{k,j}=\diag\{\alpha_{k,j},\ldots,\alpha_{k,j}\}$ is a $d\times d$ scalar matrix for each $1\leq j\leq n_k$ and each $k>0$.  Suppose that $E$ satisfies the MSC. Then $E$ is a Moran set, and $s^*$ gives the upper box dimension.
\begin{cor}\label{cor1}
Suppose that the matrices $T\in \Xi_k$ are scalar matrices for all $k>0$.
Let $E$ be the non-autonomous affine set given by ~\eqref{attractor} and satisfying MSC.  Then
$$
\overline{\dim}_{\rm B} E = s^*=d^*,
$$
where $d^*$ is given by \eqref{defd}.
\end{cor}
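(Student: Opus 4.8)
The plan is to combine Theorem~\ref{dimBU} with the box dimension formula for Moran sets recalled in the introduction, together with a direct comparison of the two critical values $s^*$ (see \eqref{defs*}) and $d^*$ (see \eqref{defd}): Theorem~\ref{dimBU} gives the upper bound $\overline{\dim}_{\rm B}E\le s^*$, the Moran formula identifies $\overline{\dim}_{\rm B}E$ with $d^*$, and an elementary estimate closes the loop by showing $s^*\le d^*$.

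\textbf{Step 1: reduce to the Moran setting.} I would first exploit the scalar hypothesis. Writing $T_{k,j}=\alpha_{k,j}I$ with $\alpha_{k,j}\in(0,1)$, every $\Psi_\bu$ is a similarity of ratio $c_\bu:=\prod_{i=1}^{|\bu|}\alpha_{i,u_i}$, so $J_\bu=\Psi_\bu(J)$ satisfies $|J_{\bu i}|/|J_\bu|=\alpha_{k,i}$, a quantity depending on the level $k$ and index $i$ only; together with the MOSC this exhibits $E$ as a Moran set with ratio vectors $(\alpha_{k,1},\dots,\alpha_{k,n_k})$. Since $\inf_{k,j}\alpha_{k,j}\ge\alpha_->0$ by \eqref{aspalpha}, the quoted Moran dimension formula gives $\overline{\dim}_{\rm B}E=d^*$. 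A scalar matrix has all its singular values equal, so for every word $\bu$ we get $\phi^s(T_\bu)=c_\bu^{\,s}$ (for $0\le s\le d$) and $\alpha_m(T_\bu)=c_\bu$ for every admissible $m$; hence the cut-set $\Sigma^*(s,\epsilon)$ does not depend on $s$ and equals $\Sigma^*(\epsilon):=\{\bu:\ c_{\bu^-}>\epsilon\ge c_\bu\}$, while $\sum_{\bu\in\Sigma^k}c_\bu^{\,s}=\prod_{i=1}^{k}\big(\sum_{j=1}^{n_i}\alpha_{i,j}^{\,s}\big)=:f_k(s)$, the function whose unique root is $d_k$.

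\textbf{Step 2: identify the critical values.} By Theorem~\ref{dimBU} and Step 1, $d^*=\overline{\dim}_{\rm B}E\le\min\{s^*,d\}\le s^*$, so it remains to prove $s^*\le d^*$. I would fix $s>d^*$, set $\eta=(s-d^*)/2>0$, and use $d^*=\limsup_k d_k$ to find $k_0$ with $d_k<s-\eta$ for all $k\ge k_0$; strict monotonicity of $f_k$ then gives $f_k(s-\eta)<f_k(d_k)=1$, and since $\alpha_{i,j}^{\,s}\le\alpha_+^{\,\eta}\alpha_{i,j}^{\,s-\eta}$ we obtain $f_k(s)\le\alpha_+^{\,\eta k}f_k(s-\eta)\le\alpha_+^{\,\eta k}$ for $k\ge k_0$. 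Each $\bu\in\Sigma^*(\epsilon)$ has $c_\bu\le\epsilon$ and $c_\bu\ge\alpha_-^{|\bu|}$, so $|\bu|\ge L(\epsilon):=\log\epsilon/\log\alpha_-\to\infty$ as $\epsilon\to0$, and the words of $\Sigma^*(\epsilon)$ of a fixed length $\ell$ form a subset of $\Sigma^\ell$; hence, once $L(\epsilon)\ge k_0$,
$$
\sum_{\bu\in\Sigma^*(\epsilon)}\phi^s(T_\bu)=\sum_{\bu\in\Sigma^*(\epsilon)}c_\bu^{\,s}\ \le\ \sum_{\ell\ge L(\epsilon)}f_\ell(s)\ \le\ \sum_{\ell\ge L(\epsilon)}\alpha_+^{\,\eta\ell}=\frac{\alpha_+^{\,\eta L(\epsilon)}}{1-\alpha_+^{\,\eta}},
$$
which tends to $0$ as $\epsilon\to0$. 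Thus $\limsup_{\epsilon\to0}\sum_{\bu\in\Sigma^*(\epsilon)}\phi^s(T_\bu)=0<\infty$, whence $s\ge s^*$; letting $s\downarrow d^*$ gives $s^*\le d^*$, and therefore $\overline{\dim}_{\rm B}E=d^*=s^*$.

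\textbf{Main obstacle.} The estimate proving $s^*\le d^*$ is just the geometric-series computation above, and the reverse inequality is free from Theorem~\ref{dimBU}; the only point that really needs care is Step~1 — checking that the scalar hypothesis together with the MOSC genuinely places $E$ in the classical Moran framework (the diameter ratios $|J_{\bu i}|/|J_\bu|$ being level-and-index dependent only, the separation hypothesis $\inf c_{k,j}>0$ being satisfied) and that $d_k\le d$, which follows from the volume bound $\sum_j\alpha_{k,j}^d\le1$ forced by the MOSC, so that the values of $s$ we work with stay in the range $[0,d]$ where the cut-set $\Sigma^*(s,\epsilon)$ and $\phi^s$ behave as described.
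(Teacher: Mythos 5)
Your proposal is correct, but it takes a genuinely different route from the paper's. The paper does not invoke the classical Moran dimension formula at all for the lower bound: after observing (as you do) that in the scalar case $\phi^s(T_\bu)=\alpha_\bu^s$ and that $\Sigma^*(s,\epsilon)$ is independent of $s$, it argues geometrically that under the MOSC every ball of radius $\epsilon$ meets at most a bounded number $c_1$ of the basic sets $J_\bu$ with $\bu\in\Sigma^*(s,\epsilon)$ (each such $J_\bu$ having diameter comparable to $\epsilon$ since $\alpha_-\epsilon<\alpha_\bu\le\epsilon$), whence $N_\epsilon(E)\ge c_1^{-1}\,\card\,\Sigma^*(s,\epsilon)\ge c\,\epsilon^{-s}\sum_{\bu\in\Sigma^*(s,\epsilon)}\phi^s(T_\bu)$; it then feeds this into Lemma~\ref{dimBL} to get $\overline{\dim}_{\rm B}E\ge s^*$ and combines with Theorem~\ref{dimBU}, leaving the identification with $d^*$ essentially to the cited classical formula. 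You instead outsource the geometric lower bound entirely to the classical Moran formula $\overline{\dim}_{\rm B}E=d^*$ and close the circle analytically: $d^*\le s^*$ from Theorem~\ref{dimBU}, and $s^*\le d^*$ from the geometric-series estimate $\sum_{\bu\in\Sigma^*(\epsilon)}c_\bu^s\le\sum_{\ell\ge L(\epsilon)}f_\ell(s)\le\sum_{\ell\ge L(\epsilon)}\alpha_+^{\eta\ell}\to0$ for $s>d^*$. Your version has the merit of making the equality $s^*=d^*$ fully explicit (the paper never actually proves it), at the cost of relying on the external Moran formula; the paper's version is self-contained on the geometric side and yields $\overline{\dim}_{\rm B}E=s^*$ without that citation.

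Two small remarks on your ``main obstacle'' paragraph. First, the volume bound $\sum_j\alpha_{k,j}^d\le1$ does not quite follow from the MOSC alone for a general compact $J$ with nonempty interior: disjointness of interiors only gives $\sum_j\alpha_{k,j}^d\,\mathcal{L}^d(\mathrm{int}\,J)\le\mathcal{L}^d(J)$, and $\partial J$ could have positive measure. But this point is dispensable: you do not need $d_k\le d$ for each $k$, only $d^*\le d$, and that is immediate from $d^*=\overline{\dim}_{\rm B}E\le d$ once Step~1 is in place; moreover, in the scalar case $\phi^s(T_\bu)=c_\bu^s$ and the cut-set $\{\bu:c_{\bu^-}>\epsilon\ge c_\bu\}$ make sense for every $s\ge0$ (including $s>d$, via the convention $\phi^s(T)=(\det T)^{s/d}$), so your estimate for $s>d^*$ goes through regardless. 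Second, your Step~1 reduction to the classical Moran framework is exactly the point the paper's hypotheses are designed to enable, and your verification of it (ratios depending only on level and index, $\inf\alpha_{k,j}\ge\alpha_->0$) is the right one.
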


Next we show that under certain strong restrictions, the critical value $s^*$ gives the upper box dimension of $E$.
We say the non-autonomous affine set $E$ satisfies the \textit{open projection condition}(OPC) if  there exists an open set $U$ such that $J\subset \overline{U}$ and for each $k>0$ and $\bu \in \Sigma^{k-1}$,
$$
U\supset \bigcup_{i=1,2,\ldots, n_k} (T_{k,i}+\w_{\bu,i})(U),
$$
with the union disjoint, and
 $$\mathcal{L}^{d-1}\{\proj_\Theta U\}=\mathcal{L}^{d-1}\{\proj_\Theta \overline{U}\},
 $$
 for all $(d-1)$-dimensional subspaces $\Theta$.

\begin{thm}\label{projc}
Let  $E$ be the non-autonomous affine set  given by ~\eqref{attractor} and satisfying OPC. Suppose that  there exists  $c>0$ such that
$$
\mathcal{L}^{d-1}\{\proj_\Theta (J\cap \Psi_\bu^{-1}(E))\}\geq c,
$$
for all $(d-1)$-dimensional subspaces $\Theta$ and all $\bu\in \Sigma^*$. Then
$$
\overline{\dim}_{\rm B} E = s^*.
$$

\end{thm}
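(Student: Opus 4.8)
The bound $\overline{\dim}_{\rm B} E \le s^*$ is already supplied by Theorem~\ref{dimBU}, because OPC forces $s^*\le d$: the cut-set $\Sigma^*(d,\epsilon)$ is an antichain, so the sets $\{\Psi_\bu(U):\bu\in\Sigma^*(d,\epsilon)\}$ are pairwise disjoint and contained in the bounded set $U$, giving $\sum_{\bu\in\Sigma^*(d,\epsilon)}\phi^d(T_\bu)=\vol(U)^{-1}\sum_{\bu\in\Sigma^*(d,\epsilon)}\vol(\Psi_\bu(U))\le 1$; hence $d$ lies in the set defining $s^*$ in~\eqref{defs*}. So everything rests on proving $\overline{\dim}_{\rm B}E\ge s^*$, and for this it suffices to show that $\limsup_{\epsilon\to 0}\epsilon^{s}N_\epsilon(E)=\infty$ for every $s<s^*$, where $N_\epsilon$ is the $\epsilon$-mesh covering number: indeed this forces $\overline{\dim}_{\rm B}E\ge s$, and letting $s\uparrow s^*$ finishes it.

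First I would make a reduction. The projection hypothesis says every set $J\cap\Psi_\bu^{-1}(E)$ has all its orthogonal projections onto hyperplanes of $\mathcal{L}^{d-1}$-measure at least $c>0$; by the projection theorem for Hausdorff dimension (a set all of whose hyperplane projections have positive $\mathcal{L}^{d-1}$-measure has Hausdorff dimension $\ge d-1$), we get $\dimh(J\cap\Psi_\bu^{-1}(E))\ge d-1$, hence $\dimh E\ge d-1$ and so $\overline{\dim}_{\rm B}E\ge d-1$. If $s^*\le d-1$ we are done; otherwise it suffices to treat $s\in(d-1,s^*)$, and since $s^*\le d$ every such $s$ satisfies $\lceil s\rceil=d$. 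This is the decisive case: for $\bu\in\Sigma^*(s,\epsilon)$ with $m=d$, all singular values of $T_\bu$ lie in $(\alpha_-\epsilon,1)$ (as $\alpha_i(T_\bu)\ge\alpha_d(T_\bu)>\alpha_-\epsilon$ by the remark after~\eqref{defs*}), while $\alpha_d(T_\bu)\le\epsilon$; so each $J_\bu$ is an affine image of $J$ which is ``flat'' in at most the single direction $e_d(T_\bu)$, and $\det T_\bu>(\alpha_-\epsilon)^d$.

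Next, the core per-piece estimate. Fix $s\in(d-1,s^*)$ and $\epsilon\in(0,1)$ and use $\Sigma^*(s,\epsilon)$, so $E=\bigcup_{\bu\in\Sigma^*(s,\epsilon)}(J_\bu\cap E)$. Writing $\Psi_\bu(x)=T_\bu x+\w_\bu$, we have $J_\bu\cap E=\Psi_\bu(F_\bu)$ with $F_\bu=J\cap\Psi_\bu^{-1}(E)$. Let $V_\bu$ be the span of the top $d-1$ left singular directions of $T_\bu$, a hyperplane. Using the singular value decomposition $T_\bu=O_\bu D_\bu O_\bu'$ (with $O_\bu,O_\bu'$ orthogonal, $D_\bu=\diag(\alpha_1(T_\bu),\dots,\alpha_d(T_\bu))$ and $V_\bu=O_\bu(\R^{d-1}\times\{0\})$), the map $\proj_{V_\bu}\circ T_\bu$ is, up to isometries on source and target, multiplication by $\diag(\alpha_1(T_\bu),\dots,\alpha_{d-1}(T_\bu))$ precomposed with the orthogonal projection of $\R^d$ onto a hyperplane $\Theta_\bu$; hence
\[
\mathcal{L}^{d-1}\bigl(\proj_{V_\bu}(J_\bu\cap E)\bigr)=\Bigl(\prod_{i=1}^{d-1}\alpha_i(T_\bu)\Bigr)\,\mathcal{L}^{d-1}\bigl(\proj_{\Theta_\bu}F_\bu\bigr)\ \ge\ c\prod_{i=1}^{d-1}\alpha_i(T_\bu).
\]
Since $\proj_{V_\bu}$ is $1$-Lipschitz and any set of diameter $\epsilon$ in $\R^{d-1}$ has $\mathcal{L}^{d-1}$-measure $\le\omega_{d-1}(\epsilon/2)^{d-1}$, this gives $N_\epsilon(J_\bu\cap E)\ge c'\epsilon^{-(d-1)}\prod_{i=1}^{d-1}\alpha_i(T_\bu)$. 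Finally $\prod_{i=1}^{d-1}\alpha_i(T_\bu)=\phi^{s}(T_\bu)\,\alpha_d(T_\bu)^{-(s-d+1)}\ge\phi^s(T_\bu)\,\epsilon^{-(s-d+1)}$ since $s-d+1\in(0,1]$ and $\alpha_d(T_\bu)\le\epsilon$, so
\[
N_\epsilon(J_\bu\cap E)\ \ge\ c''\,\epsilon^{-s}\,\phi^s(T_\bu),\qquad c''=c''(c,d,\alpha_-)>0 .
\]

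The remaining step, summing these bounds over the cut-set, is where OPC enters and is the main obstacle. Because each $J_\bu$ may be long in its $d-1$ ``fat'' directions, the family $\{J_\bu\cap E:\bu\in\Sigma^*(s,\epsilon)\}$ need not have bounded overlap when cut by $\epsilon$-cubes, so the desired inequality $N_\epsilon(E)\ge K^{-1}\sum_\bu N_\epsilon(J_\bu\cap E)$ does not follow from a one-line packing argument. The plan is to use OPC twice: the disjoint open sets $\Psi_\bu(U)$, $\bu\in\Sigma^*(s,\epsilon)$, lie in the bounded set $U$ and each has volume $\vol(U)\det T_\bu>\vol(U)(\alpha_-\epsilon)^d$, which controls their number and their $\epsilon$-neighbourhoods; and the clause $\mathcal{L}^{d-1}\{\proj_\Theta U\}=\mathcal{L}^{d-1}\{\proj_\Theta\overline U\}$ makes the images of these disjoint pieces under a fixed hyperplane projection overlap only in an $\mathcal{L}^{d-1}$-null set, permitting a Fubini estimate carried out across the (disjoint) pieces rather than across $\epsilon$-cubes. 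This yields $N_\epsilon(E)\ge c'''\,\epsilon^{-s}\sum_{\bu\in\Sigma^*(s,\epsilon)}\phi^s(T_\bu)$ with $c'''>0$ independent of $\epsilon$. Since $s<s^*$, \eqref{defs*} provides $\epsilon_n\to0$ with $\sum_{\bu\in\Sigma^*(s,\epsilon_n)}\phi^s(T_\bu)\to\infty$, so $\epsilon_n^{s}N_{\epsilon_n}(E)\to\infty$, whence $\overline{\dim}_{\rm B}E\ge s$; letting $s\uparrow s^*$ and combining with the first paragraph gives $\overline{\dim}_{\rm B}E=s^*$. I expect the overlap control in this last paragraph to be the hard part; the reduction to $m=d$ and the singular-value bookkeeping of the middle paragraphs are routine.
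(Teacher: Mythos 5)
Your upper bound, your reduction to $s\in(d-1,s^*)$ (so that $m=d$), and your per-piece estimate $N_\epsilon(J_\bu\cap E)\ge c''\epsilon^{-s}\phi^s(T_\bu)$ are all correct and closely parallel the paper's bookkeeping. But the step you yourself flag as ``the hard part'' --- aggregating the per-piece bounds over the cut-set --- is exactly where your argument stops being a proof, and the mechanism you sketch for it would not work. The clause $\mathcal{L}^{d-1}\{\proj_\Theta U\}=\mathcal{L}^{d-1}\{\proj_\Theta \overline{U}\}$ says nothing about the projections of the \emph{distinct} disjoint pieces $\Psi_\bu(U)$ being almost disjoint in a fixed hyperplane $\Theta$: disjoint open sets in $\R^d$ can have identical hyperplane projections (stacked slabs), so a ``Fubini estimate across the disjoint pieces'' under one projection cannot recover the sum $\sum_\bu$ from the disjointness of the $\Psi_\bu(U)$. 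In the paper that clause plays an entirely different role: via Dini's theorem it produces a single $\delta>0$ such that, with $U_{-\delta}=\{x\in U:B(x,\delta)\subset U\}$, one has $\mathcal{L}^{d-1}(\proj_\Theta U_{-\delta})\ge\mathcal{L}^{d-1}(\proj_\Theta \overline{U})-c/2$ uniformly in $\Theta$; intersecting with your set $\proj_\Theta(J\cap\Psi_\bu^{-1}(E))$ leaves a set of measure $\ge c/2$ over which the fibres of $U$ in the contracting direction have length $\ge\delta$.

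The paper's aggregation avoids covering numbers of the individual pieces altogether and instead bounds the Lebesgue measure of a neighbourhood of $E$ (the Minkowski-content characterisation of box dimension). Taking $\Theta$ perpendicular to the shortest semi-axis of $\Psi_\bu(B)$, the fibre estimate gives
$$
\mathcal{L}^d\{x\in\Psi_\bu(U):|x-z|\le|U|\alpha_d(T_\bu)\text{ for some }z\in E\}\ \ge\ \tfrac{1}{2}c\delta\,\alpha_1(T_\bu)\cdots\alpha_d(T_\bu)\ \ge\ \tfrac{1}{2}c\delta\,\alpha_-^{d-s}\epsilon^{d-s}\phi^s(T_\bu),
$$
and since OPC makes the sets $\Psi_\bu(U)$, $\bu\in\Sigma^*(s,\epsilon)$, pairwise disjoint, these volumes simply add, giving $\mathcal{L}^d\{x:\mathrm{dist}(x,E)\le|U|\epsilon\}\ge\tfrac{1}{2}c\delta\alpha_-^{d-s}\epsilon^{d-s}\sum_{\bu}\phi^s(T_\bu)$. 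The elementary comparison $\mathcal{L}^d(\text{$\epsilon$-neighbourhood of }E)\le N_\epsilon(E)\,c_1(2\epsilon)^d$ then yields $N_{|U|\epsilon}(E)\ge c'''\epsilon^{-s}\sum_{\bu\in\Sigma^*(s,\epsilon)}\phi^s(T_\bu)$, and Lemma~\ref{dimBL} finishes. To repair your write-up, replace your final paragraph with this volume argument; the per-piece covering-number bounds you derived, while true, are not the quantities that get summed.
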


For self-affine fractals, the box-counting dimensions and Hausdorff dimensions are bounded by the same value $d(T_1,\ldots, T_M)$. Unfortunately, $s^*$ does not provide much information for the Hausdorff dimensions of non-autonomous affine sets, and we have to find a different candidate for Hausdorff dimensions.

Fix $s\geq 0$. By applying ``Method II'' in Rogers~\cite{Bk_Rogers},
we define a Hausdorff type measure on $\Sigma^\infty$ as follows. For each integer $k>0$, let
$$
\mathcal{M}_{(k)}^s(G)=\inf \Big\{\sum_{\bu} \phi^s(T_\mathbf{u}): G\subset \bigcup_\mathbf{u} \mathcal{C}_\bu, |\mathbf{u}|\geq k \Big\}.
$$
We obtain a net measure of Hausdorff type by letting
\begin{equation}\label{measure}
\mathcal{M}^s(G)=\lim_{k\to \infty} \mathcal{M}_{(k)}^s(G),
\end{equation}
for all $G\subset \Sigma^\infty$.
Note that $\mathcal{M}^s$ is an outer measure which restricts to a measure on the Borel subsets of $\Sigma^\infty$.

We define
\begin{equation}\label{defsA}
s_A=\inf\{s:\mathcal{M}^s(\Sigma^\infty)=0\}= \sup\{s:\mathcal{M}^s(\Sigma^\infty)=\infty\}.
\end{equation}
The critical value $s_A$ is important in studying the Hausdorff dimensions for non-autonomous affine sets.  The following theorem shows that $s_A$ is an upper bound for the Hausdorff dimension of $E$.
\begin{thm}\label{dimHU}
Let $E$ be the non-autonomous affine set given by ~\eqref{attractor}. Then
$$
\dimh E \leq \min\{ s_A, d\}.
$$
\end{thm}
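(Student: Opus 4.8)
The plan is to run the standard transfer argument: a cylinder cover of $\Sigma^\infty$ that is cheap for the net measure $\mathcal M^s$ is pushed forward through $\Pi$ to an economical cover of $E$ by small sets, forcing $\mathcal H^s(E)=0$. Fix $s>s_A$. By \eqref{defsA} we have $\mathcal M^s(\Sigma^\infty)=0$; since $k\mapsto\mathcal M^s_{(k)}$ is non-decreasing and, by \eqref{measure}, $\sup_k\mathcal M^s_{(k)}(\Sigma^\infty)=\mathcal M^s(\Sigma^\infty)=0$, in fact $\mathcal M^s_{(k)}(\Sigma^\infty)=0$ for every $k$. Let $m$ be the integer with $m-1<s\le m$, as in \eqref{def svf}.

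The geometric core is a covering estimate for the basic sets, namely Falconer's counting lemma applied to a single affine map. For $\bu\in\Sigma^*$ the basic set $J_\bu=\Psi_\bu(J)=T_\bu(J)+c_\bu$ is an affine image of the fixed compact set $J$ with linear part $T_\bu$. Writing a singular value decomposition $T_\bu=U\Sigma V$ with $U,V$ orthogonal and $\Sigma=\diag(\alpha_1(T_\bu),\dots,\alpha_d(T_\bu))$, the set $J_\bu$ lies in the image under an isometry of an axis-parallel box with side-lengths at most $|J|\,\alpha_i(T_\bu)$, $i=1,\dots,d$. Such a box is covered by at most $C_1\prod_{i=1}^{m-1}\alpha_i(T_\bu)/\alpha_m(T_\bu)$ cubes of side $\alpha_m(T_\bu)$, with $C_1=C_1(d,|J|)$: along the $m-1$ ``long'' axes (where $\alpha_i(T_\bu)\ge\alpha_m(T_\bu)$) one needs $\lesssim\alpha_i(T_\bu)/\alpha_m(T_\bu)$ cubes, along the remaining axes $O(1)$. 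Transporting these cubes back by the isometry gives a cover of $J_\bu$ by $N_\bu\le C_1\prod_{i=1}^{m-1}\alpha_i(T_\bu)/\alpha_m(T_\bu)$ sets of diameter $\sqrt d\,\alpha_m(T_\bu)$, so
\[
N_\bu\bigl(\sqrt d\,\alpha_m(T_\bu)\bigr)^{s}\le C_1 d^{s/2}\,\alpha_1(T_\bu)\cdots\alpha_{m-1}(T_\bu)\,\alpha_m(T_\bu)^{\,s-m+1}=C_1 d^{s/2}\,\phi^s(T_\bu).
\]
This identity is the whole point: the scale $\alpha_m(T_\bu)$, forced by the value of $s$, is exactly the one that makes ``number of pieces'' times ``diameter to the power $s$'' collapse to the singular value function.

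Now assemble. Given $\delta>0$, choose $k$ so large that $\sqrt d\,\alpha_+^{\,k}<\delta$ (possible by \eqref{aspalpha}). Since $\mathcal M^s_{(k)}(\Sigma^\infty)=0$, for any $\eta>0$ there is a covering set $A\subset\Sigma^*$ of $\Sigma^\infty$ with $|\bu|\ge k$ for all $\bu\in A$ and $\sum_{\bu\in A}\phi^s(T_\bu)<\eta$; by compactness of $\Sigma^\infty$ and openness of the cylinders we may take $A$ finite. Because $\Pi(\mathcal C_\bu)\subseteq J_\bu$ and $\Pi$ is surjective,
\[
E=\Pi(\Sigma^\infty)\subseteq\bigcup_{\bu\in A}\Pi(\mathcal C_\bu)\subseteq\bigcup_{\bu\in A}J_\bu,
\]
so replacing each $J_\bu$ by the cover from the previous step yields a cover of $E$ by sets of diameter $\sqrt d\,\alpha_m(T_\bu)\le\sqrt d\,\alpha_1(T_\bu)\le\sqrt d\,\alpha_+^{\,|\bu|}<\delta$, whose $s$-sum is at most $C_1 d^{s/2}\sum_{\bu\in A}\phi^s(T_\bu)<C_1 d^{s/2}\eta$. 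Hence $\mathcal H^s_\delta(E)\le C_1 d^{s/2}\eta$ for every $\eta>0$, so $\mathcal H^s_\delta(E)=0$, and letting $\delta\to0$ gives $\mathcal H^s(E)=0$; therefore $\dimh E\le s$. As $s>s_A$ was arbitrary and $\dimh E\le d$ trivially, $\dimh E\le\min\{s_A,d\}$.

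I expect the only real work to be the covering estimate of the second paragraph — keeping track of the constant $C_1(d,|J|)$ and verifying that the (rotated) parallelepiped $T_\bu(J)+c_\bu$ is genuinely covered by $\lesssim\phi^s(T_\bu)/\alpha_m(T_\bu)^s$ cubes of side $\alpha_m(T_\bu)$. Since this is a purely geometric statement about one affine map, it carries over from the self-affine setting without change; the remainder is bookkeeping with the net measure $\mathcal M^s$, together with the elementary observations that $\mathcal M^s_{(k)}(\Sigma^\infty)=0$ for all $k$ and that the covering set may be taken finite.
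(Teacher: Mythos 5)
Your proposal is correct and follows essentially the same route as the paper: push a cheap cylinder cover (with $|\bu|\ge k$) through $\Pi$, enclose each basic set in a rotated box with sides comparable to the singular values of $T_\bu$, and chop it into cubes of side $\alpha_m(T_\bu)$ so that the count times the $s$-th power of the diameter collapses to $\phi^s(T_\bu)$, yielding $\mathcal H^s_\delta(E)\lesssim \sum_{\bu\in A}\phi^s(T_\bu)$. The only cosmetic difference is that the paper concludes $\mathcal H^s(E)\le C\,\mathcal M^s(\Sigma^\infty)<\infty$ directly, while you first upgrade $\mathcal M^s(\Sigma^\infty)=0$ to $\mathcal M^s_{(k)}(\Sigma^\infty)=0$ for every $k$ and get $\mathcal H^s(E)=0$; both give $\dimh E\le s$ for all $s>s_A$.
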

In section \ref{sec_CV}, we discuss the relations of $s^*$, $s_A$ and $d(T_1,\ldots, T_M)$, and we also give some examples to show that $s_A$ and $s^*$ are sharp bounds for Hausdorff dimensions and upper box dimensions of non-autonomous affine sets, respectively, see Example~\ref{exCVie} in section \ref{sec_CV}.

Given the discontinuity of dimensions of $E^\w$ in the translations $\w$, we can only expect to show that $s_A$ is also a lower bound for Hausdorff dimensions for almost all constructions, in some sense.

First, we consider a special case where the translations of affine mappings in the non-autonomous structure are selected only from a finite set.
Let $\Gamma=\{a_1,\ldots, a_\tau\}$ be a finite collection of translations, where $a_1,\ldots, a_\tau$ are regarded later as variables in $\R^d$. For each $\mathbf{u}=u_1\ldots u_k \in \Sigma^*$,  we have that $J_{\mathbf{u}}=\Psi_{\mathbf{u}}(J)=\Psi_{u_1}\circ  \ldots \circ \Psi_{u_k} (J)$. Suppose that the translation of $\Psi_{u_j}$ is an element of $\Gamma$, that is,
 $$
\Psi_{u_j}(x)=T_{j,u_j}x+ \w_{u_1\ldots u_j}, \qquad \w_{u_1\ldots u_j}\in \Gamma,
  $$
 for $j=1,2,\ldots, k$.
  We write $\ba=(a_1,\ldots, a_\tau)$ as a variable in $\R^{\tau d}$. To emphasize the dependence on these special translations in $\Gamma$,  we denote the non-autonomous affine set by $E^\ba$.  The following conclusion shows that the Hausdorff dimension of $E^\ba$ equals $s_A$ almost surely.
\begin{thm}\label{dimHL}
Given $\Gamma=\{a_1,\ldots, a_\tau\}$. Let $E^\ba$ be the non-autonomous affine set given by ~\eqref{attractor} where the translations of affine mappings are chosen from $\Gamma$.  Suppose that
$$
\sup\{\|T_{k,j}\| : 0<j\leq n_k,  k>0 \}< \frac{1}{2}.
$$
Then for $\mathcal{L}^{\tau d}$-almost all $\ba\in \R^{\tau d}$,

$(1)$ $\dimh E^\ba = s_A$ if $s_A \leq d$,

$(2)$ $\mathcal{L}^d(E^\ba)>0$ if $s_A>d$.
\end{thm}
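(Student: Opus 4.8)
The plan is to establish the matching lower bounds by the transversality method of Falconer~\cite{Falco88} and Solomyak~\cite{Solom98}, since Theorem~\ref{dimHU} already gives $\dimh E^\ba\le\min\{s_A,d\}$ for \emph{every} $\ba$. First I fix $s<s_A$; by the definition of $s_A$ through the net measure $\mathcal M^s$ we have $\mathcal M^s(\Sigma^\infty)=\infty$, so by Frostman's lemma on the tree $\Sigma^*$ there is a Borel probability measure $\mu=\mu_s$ on $\Sigma^\infty$ and a constant $c_0>0$ with $\mu(\mathcal C_\bu)\le c_0\,\phi^s(T_\bu)$ for all $\bu\in\Sigma^*$. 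Its push-forward $\mu^\ba=\Pi^\ba_{*}\mu$ is a probability measure carried by $E^\ba$, and the whole problem is reduced to controlling $\mu^\ba$ for $\mathcal L^{\tau d}$-a.e.\ $\ba$.

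The heart of the proof is a uniform transversality estimate: for every bounded open $\mathcal B\subset\R^{\tau d}$ there is a constant $C=C(\mathcal B,\alpha_-,d,t)<\infty$ such that, for every non-integer $t\in(0,d)$ and all distinct $\bu,\bv\in\Sigma^\infty$,
\[
\int_{\mathcal B}\frac{d\ba}{|\Pi^\ba(\bu)-\Pi^\ba(\bv)|^{t}}\le\frac{C}{\phi^{t}(T_{\bu\wedge\bv})},\qquad \mathcal L^{\tau d}\bigl\{\ba\in\mathcal B:\ |\Pi^\ba(\bu)-\Pi^\ba(\bv)|<r\bigr\}\le C\min\Bigl\{1,\tfrac{r^{d}}{\det T_{\bu\wedge\bv}}\Bigr\}.
\]
To prove this I write $k=|\bu\wedge\bv|$ and, using the explicit series~\eqref{points2} for $\Pi^\ba$, factor $\Pi^\ba(\bu)-\Pi^\ba(\bv)=T_{\bu\wedge\bv}\,\Delta_{\bu,\bv}(\ba)$, where $\ba\mapsto\Delta_{\bu,\bv}(\ba)$ is affine in $\ba$ with leading term $\w_{\bu|(k+1)}-\w_{\bv|(k+1)}$ and with all remaining terms forming a series whose operator norm is dominated, because $\sup_{k,j}\|T_{k,j}\|<\tfrac12$, by a geometric series of ratio $<1$; consequently the linear part of $\Delta_{\bu,\bv}$ is onto $\R^{d}$ with least singular value on its row space bounded below by a fixed positive constant. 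A linear change of variables then reduces the two integrals to $\int_{|z|\le R}|T_{\bu\wedge\bv}z|^{-t}\,dz$ and $\mathcal L^{d}\{|z|\le R:|T_{\bu\wedge\bv}z|<r\}$, which are handled by the classical singular-value-function inequalities of~\cite{Falco88}. The delicate point, and the main obstacle, is precisely this non-degeneracy of $\Delta_{\bu,\bv}$: when the same element of $\Gamma$ is assigned to $\mathbf w u_{k+1}$ and to $\mathbf w v_{k+1}$ the leading difference vanishes, and one must descend through the levels to the first place where the parameters separate, controlling the intervening matrices only through the fixed bound $\alpha_->0$; it is here that the hypothesis $\sup\|T_{k,j}\|<\tfrac12$ is used essentially.

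Granting the estimate, case~(1) follows quickly. Fix a non-integer $t$ with $t<s<s_A\,(\le d)$ and write $I_t(\nu)=\iint|x-y|^{-t}\,d\nu(x)\,d\nu(y)$ for the $t$-energy. By Tonelli and the transversality bound,
\[
\int_{\mathcal B}I_t(\mu^\ba)\,d\ba=\iint_{\Sigma^\infty\times\Sigma^\infty}\Bigl(\int_{\mathcal B}\frac{d\ba}{|\Pi^\ba(\bu)-\Pi^\ba(\bv)|^{t}}\Bigr)d\mu(\bu)\,d\mu(\bv)\le C\sum_{k\ge0}\sum_{\mathbf w\in\Sigma^{k}}\frac{\mu(\mathcal C_{\mathbf w})^{2}}{\phi^{t}(T_{\mathbf w})}.
\]
Applying $\mu(\mathcal C_{\mathbf w})\le c_0\phi^{s}(T_{\mathbf w})$ to only one of the two factors, and the elementary inequality $\phi^{s}(T_{\mathbf w})/\phi^{t}(T_{\mathbf w})\le\alpha_+^{(s-t)k}$ for $\mathbf w\in\Sigma^{k}$ (which holds because every singular value of $T_{\mathbf w}$ is at most $\alpha_+^{k}<1$), the right-hand side is $\le Cc_0\sum_{k}\alpha_+^{(s-t)k}\sum_{\mathbf w\in\Sigma^{k}}\mu(\mathcal C_{\mathbf w})=Cc_0\sum_{k}\alpha_+^{(s-t)k}<\infty$. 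Hence $I_t(\mu^\ba)<\infty$, so $\dimh\operatorname{supp}\mu^\ba\ge t$, for $\mathcal L^{\tau d}$-a.e.\ $\ba\in\mathcal B$; exhausting $\R^{\tau d}$ by balls and intersecting over a sequence of non-integer $t\uparrow s_A$ yields $\dimh E^\ba\ge s_A$ a.e., which together with Theorem~\ref{dimHU} gives $\dimh E^\ba=s_A$ a.e.

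For case~(2) ($s_A>d$) I fix $s\in(d,s_A)$, take the corresponding $\mu$, and run the $L^{2}$-density version of the argument: writing $\mu^\ba_r=\mu^\ba\ast\frac{1}{(2r)^{d}}\mathbf{1}_{B(0,r)}$, one has $\|\mu^\ba_r\|_{2}^{2}\le c_d(2r)^{-d}\int\mu^\ba(B(y,2r))\,d\mu^\ba(y)$, so by the second transversality bound
\[
\int_{\mathcal B}\|\mu^\ba_r\|_{2}^{2}\,d\ba\le\frac{c_d}{(2r)^{d}}\iint\mathcal L^{\tau d}\{\ba\in\mathcal B:|\Pi^\ba(\bu)-\Pi^\ba(\bv)|<2r\}\,d\mu\,d\mu\le Cc_0\sum_{k\ge0}\alpha_+^{(s-d)k}<\infty
\]
uniformly in $r$, using $\min\{1,(2r)^{d}/\det T_{\mathbf w}\}\,\mu(\mathcal C_{\mathbf w})^{2}\le c_0(2r)^{d}\mu(\mathcal C_{\mathbf w})\,\phi^{s}(T_{\mathbf w})/\det T_{\mathbf w}\le c_0(2r)^{d}\alpha_+^{(s-d)k}\mu(\mathcal C_{\mathbf w})$ and $\sum_{\mathbf w\in\Sigma^k}\mu(\mathcal C_{\mathbf w})=1$. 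By Fatou's lemma $\liminf_{r\to0}\|\mu^\ba_r\|_{2}<\infty$ for a.e.\ $\ba$; for such $\ba$ a weak-$L^{2}$ limit of a subsequence of $\mu^\ba_r$ must be a constant multiple of $\mu^\ba$, so $\mu^\ba\ll\mathcal L^{d}$ with a nonzero $L^{2}$ density, whence $\mathcal L^{d}(E^\ba)\ge\mathcal L^{d}(\operatorname{supp}\mu^\ba)>0$ for $\mathcal L^{\tau d}$-a.e.\ $\ba$.
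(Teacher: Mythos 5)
Your proposal follows essentially the same route as the paper: the Frostman-type measure on $\Sigma^\infty$ supplied by Lemma~\ref{measure_ineq}, the factorization $\Pi^\ba(\bu)-\Pi^\ba(\bv)=T_{\bu\wedge\bv}\,\Delta_{\bu,\bv}(\ba)$ made non-degenerate via the hypothesis $\sup\|T_{k,j}\|<\tfrac12$ (exactly the paper's bound $\|H_1\|<1$ obtained by summing the geometric series down to the first level where the assigned translations separate), Falconer's singular-value integral estimate (Lemma~\ref{lemtphi}), and the energy method of Lemma~\ref{lemHD} for part (1). The only difference is in the last step of part (2): you deduce $\mu^\ba\ll\mathcal{L}^d$ from boundedness of $L^2$ norms of mollifications, whereas the paper feeds the identical bound $\mathcal{L}^{\tau d}\{\ba:|\Pi^\ba(\bu)-\Pi^\ba(\bv)|\le\rho\}\le c\,\rho^d/\phi^d(T_{\bu\wedge\bv})$ into Mattila's lower-density criterion (Theorem~\ref{abcon}); these are interchangeable.
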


As you can see the set $E^\ba$ is special and unnatural, the reason is that there is no obvious candidate which would take the place of the Lebesgue measure in infinite dimensional spaces. Inspired by~\cite{Falco10,JPS07}, we study the Hausdorff dimensions of non-autonomous affine sets in probabilistic language.

Let $\mathcal{D}$ be a bounded region in $\R^d$. For each $\bu\in \Sigma^* $, let $\omega_\bu\in  \mathcal{D} $ be a random vector distributed
according to some Borel probability measure $P_\bu$ that is absolutely continuous with respect to $d$-dimensional Lebesgue measure. We assume that the $\omega_\bu$ are independent identically
distributed random vectors. Let $\mathbf{P} $ denote the product probability measure $\mathbf{P} = \prod_{\bu\in \Sigma^*} P_\bu$ on
the family $\omega =\{\omega_\bu : \bu \in  \Sigma^* \}.$
In this context, for each $\mathbf{u}=u_1\ldots u_k \in \Sigma^*$,  we assume that the translation of $\Psi_{u_j}$ is an element of $\w$, that is,
 $$
\Psi_{u_j}(x)=T_{j,u_j}x+ \w_{u_1\ldots u_j}, \qquad \w_{u_1\ldots u_j}\in \omega =\{\omega_\bu : \bu \in  \Sigma^* \},
  $$
 for $j=1,2,\ldots, k$. We also assume that the collection of  $J_{\mathbf{u}}=\Psi_{\mathbf{u}}(J)=\Psi_{u_1}\circ  \ldots \circ \Psi_{u_k} (J)$ fulfils the non-autonomous structure, and we call $E^\omega$ the {\it non-autonomous affine set with random translations}.

Next theorem states that, in this probabilistic setting, the Hausdorff dimension of $E^\omega$ equals $s_A$ with probability one.

\begin{thm}\label{asams}
Let $E^\w$ be the non-autonomous affine set with random translation. Then for $\mathbf{P}$-almost all $\omega$,

$(1)$ $\dimh E^\w = s_A$ if $s_A \leq d$,

$(2)$ $\mathcal{L}^d(E^\omega)>0$ if $s_A>d$.

\end{thm}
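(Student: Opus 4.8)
The plan is to combine the deterministic upper bound $\dimh E^\w\le\min\{s_A,d\}$ of Theorem~\ref{dimHU} with an almost sure lower bound obtained by the potential-theoretic method on the code space $\Sigma^\infty$. For every $s<s_A$ one has $\mathcal{M}^s(\Sigma^\infty)=\infty$, so a Frostman-type lemma for the net measure $\mathcal{M}^s$ --- the tree analogue of Frostman's lemma, provable by a max-flow/min-cut argument on $\Sigma^*$ using that $\phi^s(T_{\bu i})\le\phi^s(T_\bu)$ --- produces a Borel probability measure $\mu$ on $\Sigma^\infty$ with $\mu(\mathcal{C}_\bu)\le c\,\phi^s(T_\bu)$ for all $\bu\in\Sigma^*$. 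Pushing such a measure forward by $\Pi^\w$ (and writing $\mu^\w=\Pi^\w_\ast\mu$), I would show that for a suitable pair of exponents $s<t$ the projection of a $\phi^t$-Frostman measure has, $\mathbf{P}$-almost surely, finite $s$-energy $I_s(\mu^\w)=\iint|x-y|^{-s}\,d\mu^\w(x)\,d\mu^\w(y)$, whence $\dimh E^\w\ge\dimh\mu^\w\ge s$; letting $s,t\uparrow s_A$ then yields $(1)$, and the parallel computation at the critical exponent $d$ yields $(2)$.

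The computational core is a single probabilistic estimate. I would first record two elementary facts: the inequality $\phi^t(T_\bv)/\phi^s(T_\bv)\le\alpha_+^{(t-s)|\bv|}$ for $0\le s\le t$ (immediate from the definition of $\phi^s$ together with $\alpha_1(T_\bv)\le\alpha_+^{|\bv|}$), and the standard integral bound $\int_{B(0,\rho)}|Ty|^{-s}\,dy\le C(d,s,\rho)\,\phi^s(T)^{-1}$ for nonsingular contractions $T$ and $0\le s<d$. Now fix distinct $\bu,\bv\in\Sigma^\infty$, put $\mathbf{w}=\bu\wedge\bv$ and $k=|\mathbf{w}|$. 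By \eqref{points2} the coordinate $\w_{\mathbf{w}u_{k+1}}$ of $\w$ enters $\Pi^\w(\bu)$ only through the term $T_{\mathbf{w}}\w_{\mathbf{w}u_{k+1}}$ and enters no term of $\Pi^\w(\bv)$; conditioning on all the remaining coordinates and using $\Pi^\w(\bu),\Pi^\w(\bv)\in J_{\mathbf{w}}=T_{\mathbf{w}}(J)+(\text{const})$ gives $\Pi^\w(\bu)-\Pi^\w(\bv)=T_{\mathbf{w}}\big(\w_{\mathbf{w}u_{k+1}}+b\big)$ with $b$ in a fixed bounded set. Since $\w_{\mathbf{w}u_{k+1}}$ is independent of the conditioning and absolutely continuous, the integral bound yields
\[
\mathbf{E}\big[\,|\Pi^\w(\bu)-\Pi^\w(\bv)|^{-s}\,\big]\le C\,\phi^s(T_{\bu\wedge\bv})^{-1}\qquad(0\le s<d),
\]
and the parallel computation with the event $\{|\cdot|\le r\}$ in place of $|\cdot|^{-s}$ gives $\mathbf{P}\{|\Pi^\w(\bu)-\Pi^\w(\bv)|\le r\}\le C\,r^{d}(\det T_{\bu\wedge\bv})^{-1}$.

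For $(1)$: given $s<s_A$ (hence $s<d$), choose $t$ with $s<t<s_A$ and let $\mu_t$ be a $\phi^t$-Frostman measure as above, with projection $\mu_t^\w$. Tonelli together with the partition of $\{\bu\ne\bv\}$ according to the common prefix gives
\[
\mathbf{E}\,I_s(\mu_t^\w)\le C\sum_{k\ge0}\sum_{\mathbf{w}\in\Sigma^k}\phi^s(T_{\mathbf{w}})^{-1}\,\mu_t(\mathcal{C}_{\mathbf{w}})^2;
\]
bounding one factor by $\mu_t(\mathcal{C}_{\mathbf{w}})\le c\,\phi^t(T_{\mathbf{w}})$, then using $\phi^t(T_{\mathbf{w}})/\phi^s(T_{\mathbf{w}})\le\alpha_+^{(t-s)k}$ and $\sum_{\mathbf{w}\in\Sigma^k}\mu_t(\mathcal{C}_{\mathbf{w}})=1$, gives $\mathbf{E}\,I_s(\mu_t^\w)\le Cc\sum_{k\ge0}\alpha_+^{(t-s)k}<\infty$ because $\alpha_+<1$. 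Hence $I_s(\mu_t^\w)<\infty$ $\mathbf{P}$-a.s., so $\dimh E^\w\ge s$ $\mathbf{P}$-a.s.; intersecting over a sequence $s_n\uparrow s_A$ and invoking Theorem~\ref{dimHU} gives $\dimh E^\w=s_A$ $\mathbf{P}$-a.s. For $(2)$: when $s_A>d$, choose $t$ with $d<t<s_A$, take $\mu_t$ as above, and from $\mathbf{P}\{|\Pi^\w(\bu)-\Pi^\w(\bv)|\le r\}\le Cr^d(\det T_{\bu\wedge\bv})^{-1}$, together with $\det T=\phi^d(T)$ and the ratio bound, obtain $\mathbf{E}\big[(\mu_t^\w\times\mu_t^\w)\{|x-y|\le r\}\big]\le C'r^d\sum_{k\ge0}\alpha_+^{(t-d)k}$, a bound uniform in $r$. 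By Fatou in $\w$ this forces $\liminf_{r\to0}r^{-d}(\mu_t^\w\times\mu_t^\w)\{|x-y|\le r\}<\infty$ $\mathbf{P}$-a.s., which via the Besicovitch covering theorem yields $\mu_t^\w\ll\mathcal{L}^d$, hence $\mathcal{L}^d(E^\w)\ge\mathcal{L}^d(\mathrm{supp}\,\mu_t^\w)>0$ $\mathbf{P}$-a.s.

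The main obstacle is the failure of submultiplicativity: in contrast to the self-affine case, $\sum_{\mathbf{w}\in\Sigma^k}\phi^s(T_{\mathbf{w}})$ need not decay geometrically above any threshold, so using a Frostman measure \emph{at the exponent $s$ itself} would give only $\mathbf{E}\,I_s\le C\sum_k\sum_{\mathbf{w}\in\Sigma^k}\phi^s(T_{\mathbf{w}})$, which may diverge. The device that makes the argument go through --- and which should be emphasised --- is to take the Frostman measure at a \emph{strictly larger} exponent $t$ and to spend the surplus through $\phi^t(T_{\mathbf{w}})/\phi^s(T_{\mathbf{w}})\le\alpha_+^{(t-s)|\mathbf{w}|}$, which reintroduces geometric decay in the level $k$; here the standing hypothesis $\alpha_+<1$ is essential. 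Two further points require care: setting up the Frostman lemma for the net measure $\mathcal{M}^s$ on the (locally finite but possibly infinitely branching) tree $\Sigma^\infty$, and controlling the integrals against the densities of the $P_\bu$ --- if a density is only $L^1$ rather than bounded, the bound $\mathbf{E}[|\Pi^\w(\bu)-\Pi^\w(\bv)|^{-s}]\le C\phi^s(T_{\bu\wedge\bv})^{-1}$ should be recovered by a Hölder or truncation argument, and one must check that the conditioning/independence step is correctly aligned with the indexing $\w=\{\w_\bu:\bu\in\Sigma^*\}$.
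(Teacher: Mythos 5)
Your proposal is correct and follows essentially the same route as the paper: the deterministic upper bound from Theorem~\ref{dimHU}, a Frostman measure on $\Sigma^\infty$ at a strictly larger exponent $t$ (the paper's Lemma~\ref{measure_ineq} combined with Lemma~\ref{thmHD}), the conditional-independence estimate $\mathbb{E}\big(|\Pi^\w(\bu)-\Pi^\w(\bv)|^{-s}\mid\mathcal{F}\big)\le c/\phi^s(T_{\bu\wedge\bv})$ obtained by integrating out the single translation $\w_{(\bu\wedge\bv)u_{k+1}}$ against Falconer's singular-value integral bound, and for part (2) the bound $\mathbf{P}\{|\Pi^\w(\bu)-\Pi^\w(\bv)|\le\rho\}\le c\rho^d/\phi^d(T_{\bu\wedge\bv})$ followed by Fatou and the differentiation criterion for absolute continuity. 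The two technical caveats you flag are both handled in the paper exactly as you anticipate: the net-measure Frostman lemma is Lemma~\ref{measure_ineq} (via \cite{Book_KJF1} and \cite{Bk_Rogers}), and the paper's proof does in fact assume the densities of the $P_\bu$ are bounded.
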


Finally, we explore the lower box-counting dimension of non-autonomous affine sets. Under open projection condition, we show that $s_A$ is a lower bound for the lower box dimension.
\begin{thm}\label{dimLBL}
Let $E$ be the non-autonomous affine set given by ~\eqref{attractor} with the open projection condition satisfied. Suppose that  there exists  $c>0$ such that
$$\mathcal{L}^{d-1}\{\proj_\Theta (J\cap \Psi_\bu^{-1}(E))\}\geq c,
$$ for all $(d-1)$-dimensional subspaces $\Theta$ and all $\bu\in \Sigma^*$.
Then
$$\underline{\dim}_{\rm B} E \geq s_A.$$
\end{thm}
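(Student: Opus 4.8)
The plan is to establish the lower bound $\underline{\dim}_{\rm B} E \geq s_A$ by constructing, for each $\epsilon>0$ and each $s<s_A$, an efficient lower bound on the number of $\delta$-mesh cubes meeting $E$, where $\delta$ is comparable to $\epsilon$. Fix $s<s_A$ so that $\mathcal{M}^s(\Sigma^\infty)=\infty$. For a scale parameter $\epsilon\in(0,1)$, consider the cut-set $\Sigma^*(s,\epsilon)$ from \eqref{defs*}; recall that for $\bu\in\Sigma^*(s,\epsilon)$ we have $\alpha_-\epsilon < \alpha_m(T_\bu) < \epsilon$ where $m-1<s\le m$. The first step is to use the net measure property: because $\mathcal{M}^s(\Sigma^\infty)=\infty$, for each $k$ the mass $\mathcal{M}^s_{(k)}(\Sigma^\infty)$ is at least some large constant $M_0$, hence any covering of $\Sigma^\infty$ by cylinders $\mathcal{C}_\bu$ with $|\bu|\ge k$ satisfies $\sum_\bu \phi^s(T_\bu)\ge M_0$. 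Applied to $A=\Sigma^*(s,\epsilon)$ (taking $\epsilon$ small enough that all words in it are long), we get $\sum_{\bu\in\Sigma^*(s,\epsilon)}\phi^s(T_\bu)\ge M_0$, and since each term is at most $\alpha_m(T_\bu)^{s-m+1}\prod_{i<m}\alpha_i(T_\bu)\le \epsilon^{s-m+1}$, crude bounds give at least roughly $M_0\,\epsilon^{-(s-m+1)}$ words — but this is not yet enough; we need the number of \emph{well-separated} images $\Psi_\bu(J)$.

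The core geometric argument follows the Falconer/self-affine template as adapted for lower box dimension, using the open projection condition exactly as in Theorems~\ref{projc} and \ref{dimLBL}. For $\bu\in\Sigma^*(s,\epsilon)$ the set $\Psi_\bu(J)$ is contained in an image of $J$ under a map whose singular values are $\alpha_1(T_\bu)\ge\cdots\ge\alpha_{m-1}(T_\bu)\ge\epsilon\alpha_- $ and $\alpha_m(T_\bu),\ldots,\alpha_d(T_\bu)<\epsilon$, so $\Psi_\bu(J)$ roughly fills an $(m-1)$-dimensional plank of side lengths $\alpha_1(T_\bu),\ldots,\alpha_{m-1}(T_\bu)$ and thickness $<\epsilon$ in the remaining $d-m+1$ directions. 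Projecting onto a suitable $(d-1)$-plane $\Theta$, the hypothesis $\mathcal{L}^{d-1}\{\proj_\Theta(J\cap\Psi_\bu^{-1}(E))\}\ge c$ guarantees that $\proj_\Theta(\Psi_\bu(E))$ has $(d-1)$-measure at least $c\cdot\alpha_1(T_\bu)\cdots\alpha_{m-1}(T_\bu)\cdot(\text{something}\gtrsim\epsilon^{d-m})$; more precisely the relevant lower-dimensional volume is $\gtrsim \phi^s(T_\bu)\cdot\epsilon^{-(s-m+1)}\cdot\epsilon^{\,d-1-(m-1)}$ up to constants. Covering $E$ by $\epsilon$-mesh cubes, each such cube projects into a single $\epsilon$-cell of $\Theta$, and the OPC (the disjointness of the $(T_{k,i}+\w_{\bu,i})(U)$ together with the equal-projection-measure condition $\mathcal{L}^{d-1}\{\proj_\Theta U\}=\mathcal{L}^{d-1}\{\proj_\Theta\overline U\}$) lets us control the multiplicity with which the sets $\Psi_\bu(J)$, $\bu\in\Sigma^*(s,\epsilon)$, overlap after projection — they overlap boundedly, by a constant depending only on $J$ and $U$. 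Summing the projected volumes over $\bu\in\Sigma^*(s,\epsilon)$ and dividing by $\epsilon^{d-1}$ (the volume of an $\epsilon$-cell in $\Theta$) and by the bounded overlap multiplicity yields
$$
N_\epsilon(E)\;\ge\; \frac{c'}{\epsilon^{d-1}}\sum_{\bu\in\Sigma^*(s,\epsilon)}\alpha_1(T_\bu)\cdots\alpha_{m-1}(T_\bu)\,\epsilon^{d-m}
\;\ge\; \frac{c''}{\epsilon^{s}}\sum_{\bu\in\Sigma^*(s,\epsilon)}\phi^s(T_\bu)\,\ge\,\frac{c''M_0}{\epsilon^{s}},
$$
where the middle inequality uses $\alpha_m(T_\bu)>\alpha_-\epsilon$ to rewrite $\alpha_1\cdots\alpha_{m-1}\,\epsilon^{-1}\gtrsim \alpha_1\cdots\alpha_{m-1}\alpha_m^{s-m+1}\cdot\epsilon^{-(s-m+1)}\cdot\alpha_-^{-(s-m+1)}= c\,\phi^s(T_\bu)\epsilon^{-(s-m+1)}$ and then collects powers of $\epsilon$. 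Taking logarithms, $\liminf_{\epsilon\to0}\frac{\log N_\epsilon(E)}{-\log\epsilon}\ge s$, and letting $s\uparrow s_A$ gives $\underline{\dim}_{\rm B}E\ge s_A$.

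The step I expect to be the main obstacle is the bounded-overlap / projection-counting estimate: making rigorous that after projecting onto $\Theta$ the planks $\proj_\Theta(\Psi_\bu(J))$ for $\bu\in\Sigma^*(s,\epsilon)$ cover $\proj_\Theta(E)$ with multiplicity bounded independently of $\epsilon$. This is where the full strength of the OPC is needed — the disjointness of the open sets $(T_{k,i}+\w_{\bu,i})(U)$ gives disjointness at each single level, but a cut-set $\Sigma^*(s,\epsilon)$ mixes words of many different lengths, so one must argue that a point of $\Sigma^\infty$ lies in exactly one cylinder of the cut-set and then transfer this to bounded overlap of the (slightly fattened) images in $\R^d$, and then to their projections, using that projection can only decrease separation. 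A secondary technical point is handling the non-scalar, genuinely affine geometry: $\Psi_\bu(J)$ is not an axis-parallel box but an affine image of $J$, so one works with the singular-value decomposition of $T_\bu$ and chooses $\Theta$ orthogonal to the shortest semi-axis direction of $T_\bu$ — but $\Theta$ then depends on $\bu$, so one must either argue uniformly over a net of directions or invoke a compactness/covering argument over the Grassmannian, exactly as in the proof of Theorem~\ref{projc}, whose machinery I would reuse wholesale here. Finally one should note that when $s_A>d$ the statement is vacuous since box dimension is at most $d$, so we may assume $s_A\le d$ throughout.
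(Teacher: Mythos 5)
Your overall strategy --- use $\mathcal{M}^s(\Sigma^\infty)=\infty$ to force $\sum_{\bu\in\Sigma^*(s,\epsilon)}\phi^s(T_\bu)\geq c_1$ on the cut-set, then convert this into a scale-$\epsilon$ covering bound via the OPC and the projection hypothesis --- begins exactly as the paper's, but the counting step you rest everything on is the one that fails. You propose to bound $N_\epsilon(E)$ below by summing the $(d-1)$-dimensional measures of the projections $\proj_\Theta(\Psi_\bu(J))$, $\bu\in\Sigma^*(s,\epsilon)$, and dividing by $\epsilon^{d-1}$, using a claimed bounded overlap of these projections. That claim is false in general: the OPC gives disjointness of the sets $\Psi_\bu(U)$ in $\R^d$, but disjoint sets can project onto a hyperplane with arbitrarily large multiplicity (many thin disjoint planks stacked in the direction normal to $\Theta$ project onto essentially the same $(d-1)$-dimensional set). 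Worse, the claim is incompatible with your own conclusion: if the projections overlapped with multiplicity at most $C$, then $\sum_\bu\mathcal{L}^{d-1}\{\proj_\Theta\Psi_\bu(E)\}\leq C\,\mathcal{L}^{d-1}\{\proj_\Theta J\}$ would be bounded independently of $\epsilon$, so your displayed estimate could never exceed $N_\epsilon(E)\gtrsim\epsilon^{-(d-1)}$, i.e.\ it would only prove $\underline{\dim}_{\rm B}E\geq d-1$. The unbounded multiplicity of the stacked projections is precisely where the extra $s-(d-1)$ worth of dimension lives, so it cannot be assumed away. (There is also the problem you flag yourself: $\Theta$ must be chosen normal to the shortest semi-axis of $T_\bu$ and hence varies with $\bu$, so there is no single plane onto which all pieces are projected.)

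The paper's proof avoids projecting the pieces against one another entirely. The hypothesis $\mathcal{L}^{d-1}\{\proj_\Theta(J\cap\Psi_\bu^{-1}(E))\}\geq c$ is used only \emph{inside} each piece: by a Fubini argument along fibers normal to the ($\bu$-dependent) plane $\Theta$, each $\Psi_\bu(U)$ contains a set of points lying within distance $|U|\alpha_d(T_\bu)\leq|U|\epsilon$ of $E$ whose $d$-dimensional Lebesgue measure is at least $\tfrac12 c\delta\,\alpha_1(T_\bu)\cdots\alpha_d(T_\bu)\geq\tfrac12 c\delta\,\alpha_-^{d-s}\epsilon^{d-s}\phi^s(T_\bu)$. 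Since the sets $\Psi_\bu(U)$, $\bu\in\Sigma^*(s,\epsilon)$, are pairwise disjoint in $\R^d$, these $d$-dimensional volumes simply add, giving that the $|U|\epsilon$-neighbourhood of $E$ has Lebesgue measure at least $c_2\epsilon^{d-s}$; the Minkowski-content characterization
$$
\underline{\dim}_{\rm B}E=d-\limsup_{r\to0}\frac{\log\mathcal{L}^d\{x:|x-w|\leq r\text{ for some }w\in E\}}{\log r}
$$
then yields $\underline{\dim}_{\rm B}E\geq s$ with no multiplicity issue. If you prefer a cube count, the conversion runs in the opposite direction from the one you wrote: the $\epsilon$-neighbourhood has volume at most $N_\epsilon(E)\cdot c_1(2\epsilon)^d$, so the volume lower bound gives $N_\epsilon(E)\gtrsim\epsilon^{-s}\sum_{\bu\in\Sigma^*(s,\epsilon)}\phi^s(T_\bu)$. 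I recommend you replace your projection-counting step by this volume argument, which is exactly the mechanism already set up in the proof of Theorem~\ref{projc}.
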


\begin{cor}\label{cor2}
Let $E$ be  the non-autonomous affine set  in $\R^2$ given by ~\eqref{attractor} with  the open projection condition satisfied. Suppose that $E$ has a connected component which is not contained in any straight line, and  $J\cap \Psi_\bu^{-1}(E)=E$ for each $\bu\in\Sigma$.
Then
$$\overline{\dim}_{\rm B} E=s^*,\qquad \underline{\dim}_{\rm B} E \geq s_A.$$
\end{cor}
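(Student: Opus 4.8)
The plan is to obtain both assertions as instances of Theorem~\ref{projc} and Theorem~\ref{dimLBL}: those two results already give $\overline{\dim}_{\rm B}E=s^*$ and $\underline{\dim}_{\rm B}E\geq s_A$ under the joint hypothesis that $E$ satisfies the open projection condition and that there is a constant $c>0$ with $\mathcal{L}^{d-1}\{\proj_\Theta(J\cap\Psi_\bu^{-1}(E))\}\geq c$ for every $(d-1)$-dimensional subspace $\Theta$ and every $\bu\in\Sigma^*$. The OPC is assumed, and since here $d=2$ the subspaces $\Theta$ are lines through the origin and $\mathcal{L}^{d-1}=\mathcal{L}^1$; using the assumption $J\cap\Psi_\bu^{-1}(E)=E$, the whole corollary reduces to producing a single $c>0$ with $\mathcal{L}^1(\proj_\Theta E)\geq c$ for every line $\Theta$.

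For this I would use the hypothesized connected component $K\subset E$ which is not contained in any straight line. As $E$ is compact, $K$ is closed and hence compact. Fix a line $\Theta$; then $\proj_\Theta K$, being the continuous image of a connected compact set, is a compact interval $[a_\Theta,b_\Theta]$ inside $\Theta\cong\R$. If $a_\Theta=b_\Theta$, then all points of $K$ share the same $\Theta$-coordinate, so $K$ lies in the line through that point orthogonal to $\Theta$, contradicting the choice of $K$. Hence $\mathcal{L}^1(\proj_\Theta K)=b_\Theta-a_\Theta>0$ for every $\Theta$.

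To make this lower bound independent of $\Theta$, I would parametrise directions by $\theta\in[0,\pi]$ with $\proj_\theta(x)=\langle x,(\cos\theta,\sin\theta)\rangle$, so that $\mathcal{L}^1(\proj_\theta K)=\max_{x,y\in K}\langle x-y,(\cos\theta,\sin\theta)\rangle$. The right-hand side is the maximum over the compact set $K\times K$ of a function that is jointly continuous in $(\theta,x,y)$, hence a continuous function of $\theta$; being strictly positive on the compact interval $[0,\pi]$, it attains a positive minimum $c>0$. Since $K\subset E=J\cap\Psi_\bu^{-1}(E)$, we get $\mathcal{L}^1(\proj_\Theta(J\cap\Psi_\bu^{-1}(E)))\geq\mathcal{L}^1(\proj_\Theta K)\geq c$ for all $\Theta$ and all $\bu$, which is exactly the hypothesis required, and Theorems~\ref{projc} and~\ref{dimLBL} then yield the two stated conclusions.

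I expect no genuine obstacle here: the heart of the matter is the elementary fact that in the plane a connected set avoiding every line has a non-degenerate projection in each direction, together with a compactness argument upgrading this to a bound uniform in the direction. The only points needing a little care are that a connected component of the compact set $E$ is closed (so that the projection interval is closed and the maximum above is attained) and the bookkeeping required to transfer the projection estimate from $E$ through $J\cap\Psi_\bu^{-1}(E)$ using the stated identity; once these are settled the two dimension statements are immediate invocations of the earlier theorems.
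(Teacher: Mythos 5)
Your proposal is correct and follows essentially the same route as the paper: both reduce the corollary to the hypotheses of Theorem~\ref{projc} and Theorem~\ref{dimLBL} by using the connected component not contained in a line, together with $J\cap\Psi_\bu^{-1}(E)=E$, to produce a uniform lower bound $c>0$ for $\mathcal{L}^1(\proj_\Theta E)$ over all directions $\Theta$. The only difference is the mechanism for uniformity --- the paper picks three non-collinear points and takes $c$ to be their minimum pairwise distance, while you minimize the continuous function $\theta\mapsto\mathcal{L}^1(\proj_\theta K)$ over the compact set of directions --- and your version is in fact the more careful one, since the paper's stated constant is not literally a lower bound for every projection (e.g.\ an equilateral triangle projected perpendicular to a side), though a positive constant of course still exists.
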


\section{Dimension estimates of non-autonomous sets}\label{sec_NAS}

In this section, we estimate the dimensions of the attractor $E$ of an NIFS consisting of contractions which are no similarities.

First, we show that the Hausdorff dimension and upper  box-counting dimension  are upper bounded by $d_*$ and $d^*$ given by ~\eqref{defd}, respectively, if
for all $ k\geq 1$,
$$
 |S_{k,i}(x)-S_{k,i}(y)| \leq c_{k,i}|x-y|, \qquad (x,y)\in J
$$
where $c_{k,i}<1$,  for all $i=1,2,\ldots, n_k$.

\begin{proof}[Proof of Theorem~\ref{dimub}]
For $\bu=u_1\ldots u_k \in \Sigma^*$, we write $u^*=u_1\ldots u_{k-1}$ and  $c_\bu=c_{1,u_1}\ldots c_{k,u_k}$. Since for all $ k\geq 1$,
$$
|S_{k,i}(x)-S_{k,i}(y)| \leq c_{k,i}|x-y|, \qquad (x,y)\in J
$$
for every $i=1,2,\ldots, n_k$, it is clear that $|J_\bu| \leq c_\bu |J|$.

First, we prove that $\dimh E \leq d_*$.
For $t>d_*$, there exits a sequence $\{k_i\}$ such that $t>d_{k_i}$, and it follows that $\prod_{l=1}^{k_i}\left(\sum_{j=1}^{n_l} (c_{l,j})^t\right)<1$. For $\delta>0$, there exists $i$ such that $\max_{\bu \in \Sigma^{k_i}} |J_\bu| < \delta$, and the set $\{J_\bu: \bu \in \Sigma^{k_i}\}$ is a $\delta$-cover of $E$. Hence
\begin{eqnarray*}
\mathcal{H}_\delta^t(E) &\leq& \sum_{\bu\in \Sigma^{k_i}}(c_{1,u_1}\ldots c_{k_i,u_{k_i}} |J|)^t \\
&=& \prod_{l=1}^{k_i}\left(\sum_{j=1}^{n_l} (c_{l,j})^t\right) |J|^t \\
&<& |J|^t.
\end{eqnarray*}
By taking $\delta\to 0$, we have $\mathcal{H}^t(E)  \leq |J|^t < \infty$. Since $t$ is arbitrarily chosen, it follows that $\dimh E \leq d_*$.

Next, suppose that  $c_* = \inf_{k,i} c_{k,i} > 0$, and we prove that $\overline{\dim}_{\rm B} E \leq d^*$.

For each given $t>d^*$, there exists $K>0$ such that for $k>K$, $d_k < t$. It follows that $\sum_{\bu\in \Sigma^k} c_\bu^t < 1$. Recall that  $\bu^-=u_1\ldots u_{k-1}$ for $\bu=u_1\cdots u_k\in\Sigma^k$.  For sufficiently small $r>0$, let
$$
\Sigma^*(r)=\{\bu \in \Sigma^*: c_\bu \leq r < c_{\bu^-}\}.
$$
Then $\{J_\bu: \bu\in \Sigma^*(r)\}$ is a $(r|J|)$-cover of $E$, and $c_* r < c_\bu$. Let $\sharp$ denote the cardinality of a set. Thus $N_{r|J|}(E)\leq \sharp \Sigma^*(r)$, and
$$
\sum_{\bu\in \Sigma^*(r)} c_\bu^t > (c_* r)^t \sharp \Sigma^*(r).
$$

Fix sufficiently small $r>0$. Let
$$
K_1=\max\{|\bu|: \bu\in \Sigma^*(r)\},\qquad K_2=\min\{|\bu|: \bu\in \Sigma^*(r)\},
$$
where $K_1 \geq K_2 > K$.

For each $\bu=u_1u_1\ldots u_{K_1}\in \Sigma^*(r)$, it is clear that $\bu^{-}j=u_1u_1\ldots u_{K_1-1}j\in \Sigma^*(r)$ for all $j\in\{1,\ldots,n_{K_1}\}$. If $\sum_j c_{K_1,j}^t \geq 1$, we have that
$$
\sum_j c_{\bu^* j}^t  \geq c_{\bu^*}^t \left(\sum_j c_{K_1,j}^t \right) \geq c_{\bu^*}^t .
$$
otherwise for $\sum_j c_{K_1,j}^t < 1$, we write that  $A_1=\{\bu\in \Sigma^*(r):|\bu|=K_1-1\}\cup \{\bu^-:\bu\in \Sigma^*(r), |\bu|=K_1\}$,  and obtain that
\begin{eqnarray*}
\sum_{\bu\in \Sigma^*(r) \atop |\bu|=K_1-1} c_{\bu}^t + \sum_{\bu\in \Sigma^*(r) \atop |\bu|=K_1} c_{\bu}^t &\geq& \sum_{\bu\in \Sigma^*(r) \atop |\bu|=K_1-1} c_{\bu}^t \left(\sum_j c_{K_1,j}^t\right)+\sum_{\bu\in \Sigma^*(r) \atop |\bu|=K_1} c_{\bu}^t \\
&\geq& \sum_{\bu\in A_1} c_{\bu}^t \left(\sum_j c_{K_1,j}^t t\right).
\end{eqnarray*}
By repeating this process, we have
$$
\sum_{\bu\in \Sigma^*(r)} c_\bu^t\leq \sum_{\bu\in \Sigma^k} c_\bu^t<1,
$$
for some $K_1 \leq k \leq K_2$. Hence $N_{r|J|}(E)\leq \sharp \Sigma^*(r) < (c_* r)^{-t}$, and it implies
$\overline{\dim}_{\rm B} E \leq d^*$.
\end{proof}

Next, we show that  $d_*$ and $d^*$ given by ~\eqref{defd} are the lower bounds of  Hausdorff dimension and upper  box-counting dimension of non-autonomous set $E$, respectively, if $E$  satisfies GSC, and for all $ k\geq 1$,
$$
 |S_{k,i}(x)-S_{k,i}(y)| \geq c_{k,i}|x-y|, \qquad (x,y)\in J
$$
where $c_{k,i}<1$,  for all $i=1,2,\ldots, n_k$.

\begin{proof}[Proof of Theorem~\ref{dimlb}]
First, we prove that $\dimh E \geq d_*$. For each $t<d_*$, there exists $K>0$ such that for $k>K$, $t<d_k$, which implies that $\prod_{l=1}^k (\sum_{j=1}^{n_l} c_{l,j}^t) > 1$.

Let $p_{k,j}=\frac{c_{k,j}^t}{\sum_{i=1}^{n_k} c_{k,i}^t}$,  and $\mu$ be a probability Borel measure on $\Sigma^\infty$ defined by
$$
\mu([\mathcal{C}_\bu])=p_{1,u_1}\ldots p_{k,u_k}.
$$
where $\bu=u_1 \ldots u_k$. Then $\widetilde{\mu}=\mu\circ \Pi^{-1}$ is a probability Borel measure on $E$.

For each  $x\in E$, there is a unique $\bu \in \Sigma^k$ such that $x\in J_\bu$ for each $k$. Fix $r>0$, we write
$$
\Sigma^*(r)=\{\bu\in \Sigma^*: C c_\bu |J| \leq r < C c_{\bu^-}|J|\} ,
$$
where $C$ is the constant in the definition of gap separation condition.
Given  $x\in E$, there exists $\bu\in \Sigma^*(r)$ such that $x\in J_\bu$ . Since $E $ satisfies gap separation condition, we have that $d(J_\bu, J_\bv) \geq C|J_{\bu^-}| \geq
C c_{\bu^-} |J|>r$, and it implies  that $E\cap B(x,r)\subset J_\bu$. Hence
\begin{eqnarray*}
\widetilde{\mu}(E\cap B(x,r)) &\leq& \widetilde{\mu}(J_\bu)\\
&=& \mu([\mathcal{C}_\bu]) \\
&=& \frac{c_{1,u_1}^t \ldots c_{k,u_k}^t}{\prod_{l=1}^k (\sum_{j=1}^{n_l} c_{l,j}^t)} \\
&\leq& c_{1,u_1}^t \ldots c_{k,u_k}^t \\
&\leq& C^{-t} |J|^{-t} r^t.
\end{eqnarray*}
For each $U\subset  \R^{d} $ such that $U\cap E\neq \emptyset$ and $|U|\leq Cc_*|J|$, we have that $U\subset B(x,|U|)$ for some $x\in E$, and  it implies that $\widetilde{\mu}(U) \leq C^{-t} |J|^{-t} |U|^t$. By the Mass distribution principle, see~\cite[Theorem 4.2]{Bk_KJF2}, we have that $\dimh E \geq t$. Since $t$ is arbitrarily chosen, it follows that $\dimh E \geq d_*$.

Next, we prove that  $\overline{\dim}_{\rm B} E \geq d^*$.
Fix $s<t<d^*$. There exists a sequence of $\{k_i\}$ such that $d_{k_i}>t$, and it implies that
$$
\sum_{\bu\in \Sigma^{k_i}} c_\bu^{t} > 1.
$$
For each large $i$, it is clear that
$$
\Sigma^{k_i}=\bigcup_{l=0}^\infty \{\bu\in \Sigma^{k_i}: 2^{-l-1}<c_\bu\leq 2^{-l}\}.
$$
Let $n_l$ be cardinality of the set $\{\bu\in \Sigma^{k_i}: 2^{-l-1}<c_\bu\leq 2^{-l}\}$.  Then there exists  $l_i$ such that
$$
n_{l_i} > 2^{l_i s}(1-2^{s-t}),
$$
and for all $\bu\neq\bv\in\Sigma^{k_i}$, by the gap separation condition, the gap between $J_\bu$ and $J_\bv$ is at least
$$
\inf\{|x -y |: x\in J_{\bu }, y\in J_{\bv}) \geq C |J_{\bu\wedge\bv}|\geq C |J_{\bu^-}|>2^{-l_i-1} C |J|.
$$
This implies that for every $x\in E$, the ball $B(x,2^{-l_i-1} C |J|)$ intersects only one basic set $J_\bu$ at $k_i$th level with $c_\bu > 2^{-l_i-1}$. Hence $N_{2^{-l_i-1} C |J|}(E) > n_{l_i}$, and immediately, we have that
\begin{eqnarray*}
  \limsup_{\delta\to 0} N_\delta(E) \delta^s &\geq & \limsup_{i\to \infty} N_{2^{-l_i-1} C |J|}(E)\cdot (2^{-l_i-1} C |J|)^s \\
  &\geq& \limsup_{i\to\infty} n_{l_i} 2^{-l_is}2^{-s} C^s |J|^s\\
 &\geq& 2^{-s} C^s |J|^s (1-2^{s-t})
\end{eqnarray*}
It follows that $\overline{\dim}_{\rm B} \geq s$ for all $s<d^*$, and the conclusion holds.
\end{proof}

\section{Box-counting  dimensions of non-autonomous affine sets}\label{sec 3}\label{secPB}
Recall that for each $s>0$ and $0<\epsilon <1$, let $m$ be the integer that $m-1<s\leq m$ and
$$
\Sigma^*(s, \epsilon)=\{\mathbf{u}=u_1\ldots u_k \in \Sigma^*: \alpha_m(T_\mathbf{u})  \leq \epsilon < \alpha_m(T_{\bu^-})  \}.
$$
For all real $s'$ such that  $m-1<s'\leq m$, we always have that
\begin{equation}\label{sigmaeq}
\Sigma^*(s', \epsilon)=\Sigma^*(s, \epsilon)=\Sigma^*(m, \epsilon).
\end{equation}
Since for each $\bu\in \Sigma^*(m, \epsilon)$, $\phi^s(T_\bu)$ is continuous and strictly decreasing, it is clear that
$$
\sum_{\bu\in\Sigma^*(s, \epsilon)}\phi^s(T_\bu)<\infty
$$
is continuous and strictly decreasing on $(m-1,m]$.

Let $N_\epsilon(A)$ be the smallest number of sets with diameters at most $\epsilon$ covering the set  $A\subset \mathbb{R}^d$. First we give the proof for that $s^*$ is the upper bound for the upper box dimension of $E$.

\begin{proof}[Proof of Theorem~\ref{dimBU}]
 Let $B$ be a ball such that $J\subset B$. Given $\delta > 0$, there exists an integer $k$ such that $|\Psi_\mathbf{u}(B)| < \delta$ for all $|\mathbf{u}| \geq k$. Let $A$ be a covering set of $\Sigma^\infty$ such that $|\mathbf{u}| \geq k$ for each $\mathbf{u}\in A$. Then $E \subset \bigcup_{\mathbf{u}\in A} \Psi_\mathbf{u}(B)$. Each ellipsoid $\Psi_\mathbf{u}(B)$ is contained in a rectangular parallelepiped of side lengths $2|B|\alpha_1, \ldots, 2|B|\alpha_d$ where the $\alpha_i$ are the singular values of $T_\mathbf{u}$.

 Fix $s>0$. Let $m$ be the least integer greater than or equal to $s$. We divide such a parallelepiped into at most
$$
(4|B|)^d\alpha_1 \alpha_2 \ldots \alpha_{m-1} \alpha_m^{1-m}=(4|B|)^d \phi^s(T_\mathbf{u}) \alpha_m^{-s}
$$
cubes of side $\alpha_m \leq \epsilon$.  Hence, the total number of cubes with side $\epsilon$ covering  $\bigcup_{\mathbf{u}\in \Sigma^*(s,\epsilon)} \Psi_\mathbf{u}(B)$ is bounded by
$$
(4|B|)^d \sum_{\mathbf{u}\in \Sigma^*(s,\epsilon)} \phi^s(T_\mathbf{u}) \alpha_m^{-s} \leq  (4|B|)^d  \alpha_-^{-s} \epsilon^{-s} \sum_{\mathbf{u}\in \Sigma^*(s,\epsilon)} \phi^s(T_\mathbf{u}).
$$
Since  $E\subset \bigcup_{\mathbf{u}\in \Sigma^*(s,\epsilon)} \Psi_\mathbf{u}(B)$,  it is clear that
\begin{equation}\label{Nlphi}
N_\epsilon(E) \leq (4|B|)^d  \alpha_-^{-s} \epsilon^{-s} \sum_{\mathbf{u}\in \Sigma^*(s,\epsilon)} \phi^s(T_\mathbf{u}).
\end{equation}

Since we only know that $\sum_{\bu\in\Sigma^*(s, \epsilon)}\phi^s(T_\bu)$
is continuous and strictly decreasing on $(m-1,m]$, we consider the following two cases:
$$
(1) \quad \limsup_{\epsilon \to \infty} \sum_{\mathbf{u}\in \Sigma^*(s^*,\epsilon)} \phi^{s^*}(T_\mathbf{u})<\infty, \qquad (2) \limsup_{\epsilon \to \infty} \sum_{\mathbf{u}\in \Sigma^*(s^*,\epsilon)} \phi^{s^*}(T_\mathbf{u})=\infty.
$$

\noindent Case (1): Since  $\limsup_{\epsilon\to 0}\sum_{\bu\in\Sigma^*(s^*, \epsilon)}\phi^{s^*}(T_\bu)<\infty$,  there exists a constant $\epsilon_0$ such that for $\epsilon<\epsilon_0$,
\begin{equation}
\sum_{\mathbf{u}\in \Sigma^*(s^*,\epsilon)} \phi^{s^*}(T_\mathbf{u}) < c,
\end{equation}
where $c$ is a constant. By \eqref{Nlphi}, it follows that
$$
N_\epsilon(E) \leq c_1 \epsilon^{-s^*},
$$
where $c_1=c (4|B|)^d \alpha_-^{-s^*}$.Hence
$$
-\frac{\log N_\epsilon(E)}{\log \epsilon} \leq \frac{ c_1 + s^*\log \epsilon }{\log \epsilon},
$$
and it implies  that
$$
\overline{\dim}_{\rm B} E = \limsup_{\epsilon\to 0} \frac{-\log N_\epsilon(E)}{\log \epsilon} \leq s^*.
$$
Therefore,  the conclusion holds.

\noindent Case (2): Suppose that $\limsup_{\epsilon\to 0} \sum_{\bu \in \Sigma^* (s^*, \epsilon)} \phi^{s^*}(T_\bu)=\infty$. Let $m$ be the least integer greater than $s^*$, and let $s$ be non-integral with $m-1 \leq s^* < s < m$. By the definition of $s^*$, there exists $s_0$ such that $s^*<s_0<s$ and
$$
\limsup_{\epsilon\to 0} \sum_{\bu \in \Sigma^* (s_0, \epsilon)} \phi^{s_0}(T_\bu)<\infty.
$$
Since $m-1<s_0<s<m$, we have that $\Sigma^* (s_0, \epsilon)=\Sigma^* (s, \epsilon)$. Hence for each $\epsilon>0$,
$$
\sum_{\bu \in \Sigma^* (s, \epsilon)} \phi^s(T_\bu)<\sum_{\bu \in \Sigma^* (s_0, \epsilon)} \phi^{s_0}(T_\bu).
$$
It follows that
$$
\limsup_{\epsilon\to 0} \sum_{\bu \in \Sigma^* (s, \epsilon)} \phi^s(T_\bu)\leq \limsup_{\epsilon\to 0} \sum_{\bu \in \Sigma^* (s_0, \epsilon)} \phi^{s_0}(T_\bu)<\infty.
$$
By the similar argument as above, we have that
$$
-\frac{\log N_\epsilon(E)}{\log \epsilon} \leq \frac{ c + s_0\log \epsilon }{\log \epsilon},
$$
and it implies that
$\overline{\dim}_{\rm B} E  \leq s_0<s$. Since $s$ is chosen arbitrarily,  we have that
$$
\overline{\dim}_{\rm B} E  \leq s^*,
$$
and the conclusion holds.

\end{proof}

To prove that $s^*$ is the lower bound of upper box dimension, we need the following technical lemma which gives a sufficient condition for the upper box dimension of $E$.
\begin{lem}\label{dimBL}
Let $E$ be the non-autonomous affine set given by ~\eqref{attractor}. Suppose that there exists an increasing sequence $\{s_n\}_{n=1}^\infty$ convergent to $s^*$  and a sequence $\{c_n>0\}_{n=1}^\infty$  such that for each integer $n>0$,
$$
N_\epsilon(E) \geq c_n \epsilon^{-s_n} \sum_{\mathbf{u}\in \Sigma^*(s_n,\epsilon)} \phi^{s_n}(T_\mathbf{u}),
$$
for all $\epsilon>0$.  Then $\overline{\dim}_{\rm B} E = s^*$.
\end{lem}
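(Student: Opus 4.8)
The plan is to establish the lower bound $\overline{\dim}_{\rm B}E\ge s^*$ and combine it with Theorem~\ref{dimBU}. That theorem already gives $\overline{\dim}_{\rm B}E\le\min\{s^*,d\}$, and since $E$ is a bounded subset of $\R^d$ one automatically has $\overline{\dim}_{\rm B}E\le d$; so once $\overline{\dim}_{\rm B}E\ge s^*$ is in hand it follows that $s^*\le d$ and hence $\overline{\dim}_{\rm B}E=\min\{s^*,d\}=s^*$, which is the assertion. For the lower bound I would fix an index $n$, prove $\overline{\dim}_{\rm B}E\ge s_n$, and then let $n\to\infty$, using $s_n\uparrow s^*$.

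To get $\overline{\dim}_{\rm B}E\ge s_n$, write $S_n(\epsilon)=\sum_{\bu\in\Sigma^*(s_n,\epsilon)}\phi^{s_n}(T_\bu)$. This is a finite positive number: $\Sigma^*(s_n,\epsilon)$ is a cut-set, and every $\bu$ in it satisfies $\epsilon<\alpha_m(T_{\bu^-})\le\alpha_1(T_{\bu^-})\le\alpha_+^{|\bu|-1}$, so the word lengths are bounded and the cut-set is finite, while $\phi^{s_n}(T_\bu)>0$ since the $T_\bu$ are nonsingular. Because $s_n<s^*$ and $s^*$ is the infimum in \eqref{defs*}, the number $s_n$ does not belong to the set defining it, i.e.
\[
\limsup_{\epsilon\to0}S_n(\epsilon)=\infty.
\]
I would then take logarithms in the hypothesis $N_\epsilon(E)\ge c_n\epsilon^{-s_n}S_n(\epsilon)$ and divide by $\log\epsilon$ — which is negative for $\epsilon<1$, so the inequality flips — to obtain
\[
\frac{-\log N_\epsilon(E)}{\log\epsilon}\ \ge\ s_n+\frac{-\log c_n-\log S_n(\epsilon)}{\log\epsilon}\qquad(0<\epsilon<1).
\]
Passing to $\limsup_{\epsilon\to0}$, the term $-\log c_n/\log\epsilon$ vanishes, and along any sequence $\epsilon_j\to0$ with $S_n(\epsilon_j)\to\infty$ both $-\log c_n-\log S_n(\epsilon_j)$ and $\log\epsilon_j$ tend to $-\infty$, so their quotient is eventually positive; hence $\limsup_{\epsilon\to0}\bigl(-\log c_n-\log S_n(\epsilon)\bigr)/\log\epsilon\ge0$. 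Therefore $\overline{\dim}_{\rm B}E=\limsup_{\epsilon\to0}(-\log N_\epsilon(E))/\log\epsilon\ge s_n$, and letting $n\to\infty$ completes the lower bound.

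I do not expect a genuine obstacle here: the argument is the bookkeeping dual to the covering estimate already carried out in the proof of Theorem~\ref{dimBU}. The points that need a little care are the sign of $\log\epsilon$ when dividing the logarithmic inequality, the observation that $s_n<s^*=\inf\{\cdots\}$ forces $\limsup_{\epsilon\to0}S_n(\epsilon)=\infty$ directly from the definition \eqref{defs*}, and the fact that control of $S_n$ is only available along the subsequence $\epsilon_j$ on which it blows up — which is exactly enough, since the upper box dimension is itself a $\limsup$ as $\epsilon\to0$.
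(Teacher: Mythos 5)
Your proof is correct and follows essentially the same route as the paper's: both deduce from $s_n<s^*$ and the definition \eqref{defs*} that $\limsup_{\epsilon\to 0}\sum_{\bu\in\Sigma^*(s_n,\epsilon)}\phi^{s_n}(T_\bu)=\infty$, then use the hypothesized covering lower bound along a subsequence where this sum blows up to get $\overline{\dim}_{\rm B}E\ge s_n$, and let $n\to\infty$. The only cosmetic difference is that the paper picks $\epsilon_{n_k}$ with the sum exceeding $1$ and discards it, whereas you keep $\log S_n(\epsilon)$ and note the quotient is eventually nonnegative; your extra remark reconciling the stated conclusion $\overline{\dim}_{\rm B}E=s^*$ with the bound $\min\{s^*,d\}$ from Theorem~\ref{dimBU} is a welcome clarification.
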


\begin{proof}
Since $\{s_n\}_{n=1}^\infty$ is increasing and  convergent to $s^*$, we assume that  $s_n $ is non-integral such that  $m-1<s_n<s^*\leq m$ for some integer $m$. By the definition of $s^*$ and $\Sigma^*(s_n, \epsilon)$, we have that
$$
\limsup_{\epsilon\to 0}\sum_{\bu\in\Sigma^*(s_n, \epsilon)}\phi^{s_n}(T_\bu)=\infty.
 $$
Hence there exists a sequence $\{\epsilon_{n_k}\}$ such that $\lim_{k\to \infty} \epsilon_{n_k} = 0$, and
\begin{equation}
\sum_{\mathbf{u}\in \Sigma^*(s_n,\epsilon_{n_k})} \phi^{s_n}(T_\mathbf{u}) > 1.
\end{equation}

Since
$$
N_{\epsilon_{n_k}}(E) \geq c_n\sum_{\mathbf{u}\in \Sigma^*(s_n\epsilon_{n_k})} \phi^{s_n}(T_\mathbf{u}) \epsilon_n^{-s_n} > c_n\epsilon_n^{-s_n},
$$
we have that
$$
\limsup_{k\to\infty}-\frac{\log N_{\epsilon_{n_k}}(E)}{\log \epsilon_{n_k}} \geq s_n.
$$
It follows that
$$
\overline{\dim}_{\rm B} E = \limsup_{\epsilon\to 0} \frac{-\log N_\epsilon(E)}{\log \epsilon} \geq s_n,
$$
for all $n>0$. Since $\lim_{n\to\infty} s_n= s$, letting $n$ tends to infinite, the conclusion holds.
\end{proof}

\begin{proof}[Proof of Corollary ~\ref{cor1}]
Since $T_{k,i}$ are scalar matrices, for $\bu=u_1\ldots u_k\in\Sigma^k$, the matrix $T_\bu$ is still a scalar matrix given by
$$
T_\mathbf{u}=\diag\{\alpha_{1,u_1} \alpha_{2,u_2} \ldots \alpha_{k,u_k},\ldots,\alpha_{1,u_1} \alpha_{2,u_2} \ldots \alpha_{k,u_k}\}.
$$
We write
$$
\alpha_\bu=\alpha_{1,u_1} \alpha_{2,u_2} \ldots \alpha_{k,u_k},
$$
and it is clear that $\phi^s(T_\mathbf{u})=\alpha_\bu^s$ and
$$
\Sigma^*(s,\epsilon)=\{\bu\in\Sigma^*:  \alpha_\bu \leq\epsilon< \alpha_{\bu^-}  \}.
$$
Note that $\Sigma^*(s,\epsilon)$ is independent of $s$.

Fix a sufficiently small $\epsilon>0$. By~\eqref{alpha}, for each $\bu\in\Sigma^*(s, \epsilon)$,
$$
\alpha_\bu \geq \alpha_- \alpha_{\bu^-} > \alpha_- \epsilon.
$$
There exists a constant $c_1$ such that every ball with radius $\epsilon$ intersects no more than $c_1$ basic sets $J_\bu $ for $\bu\in\Sigma^*(s,\epsilon)$. Hence $\card\,\Sigma^*(s,\epsilon) \leq c_1 N_\epsilon(E)$. It follows that
\begin{eqnarray*}
N_\epsilon(E) &\geq& c_1^{-1} \card\,\Sigma^*(s,\epsilon)\\
&\geq& c_1^{-1} \sum_{\bu\in\Sigma^*(s,\epsilon)} (\alpha_-)^{-1} \alpha_\bu \epsilon^{-s}\\
&\geq& c \epsilon^{-s}\sum_{\mathbf{u}\in \Sigma^*(s,\epsilon)} \phi^s(T_\mathbf{u}),
\end{eqnarray*}
where $c$ is a constant independent of $\bu$ and $\epsilon$.
By Lemma ~\ref{dimBL}, we have that
$$
\overline{\dim}_{\rm B} E \geq s^*.
$$
Combining with Theorem~\ref{dimBU}, the conclusion holds.
\end{proof}

\begin{proof}[Proof of Theorem~\ref{projc}]
By Theorem~\ref{dimBU}, it is clear that $s^*\geq \overline{\dim}_{\rm B}(E)\geq d-1$. If $s^*=d-1$, it is clear that
$$
\overline{\dim}_{\rm B} E =d-1=s^*,
$$
and the conclusion holds.  Hence we only need to show $\overline{\dim}_{\rm B} E \geq s^*$ for $s^*>d-1$.

Let  $U$ be  the open set in the OPC condition. Given $\delta >0$, we write
$$
U_{-\delta}=\{x\in U: B(x,\delta)\in U\}.
$$
For each given $(d-1)$-dimensional subspace $\Theta$, by the continuity of Lebesgue measure, it is clear that   $\mathcal{L}^{d-1}(\proj_\Theta U_{-\delta})$ monotonically increases to $\mathcal{L}^{d-1}(\proj_\Theta U)$ as $\delta$ tends to $0$. Since $\mathcal{L}^{d-1}(\proj_\Theta U_{-\delta})$ and $\mathcal{L}^{d-1}(\proj_\Theta U)$ are continuous in $\Pi$ with respect to  the Grassmann topology on the set of $(d-1)$-dimensional subspaces, by Dini's theorem, we have that $\mathcal{L}^{d-1}(\proj_\Theta U_{-\delta})$  uniformly converges to  $\mathcal{L}^{d-1}(\proj_\Theta U)$ in $\Theta$. Since $\mathcal{L}^{d-1}(\proj_\Theta U)=\mathcal{L}^{d-1}(\proj_\Theta \overline{U})$,  we may choose $\delta > 0$ such that
$$
\mathcal{L}^{d-1}(\proj_\Theta U_{-\delta}) \geq \mathcal{L}^{d-1}(\proj_\Theta \overline{U})-\frac{1}{2}c.
$$
for all subspaces $\Theta$. Let $G_0=\proj_\Theta U_{-\delta}\cap \proj_\Theta (J\cap \Psi_\bu^{-1}(E))$, then
$$
\mathcal{L}^{d-1}(G_0) \geq \mathcal{L}^{d-1}(\proj_\Theta (J\cap \Psi_\bu^{-1}(E))) - \mathcal{L}^{d-1}(\proj_\Theta \overline{U})+ \mathcal{L}^{d-1}(\proj_\Theta U_{-\delta}) \geq \frac{1}{2}c.
$$
For each $\mathbf{u}\in \Sigma$, if $\Theta$ is the
$(d-1)$-dimensional subspace of $\R^d$ perpendicular to the shortest semi-axis of the ellipsoid $\Psi_ \mathbf{u}(B)$, where $B$ is the unit ball in $\R^d$.
Let $G=\Psi_\bu(G_0)$, then
\begin{equation}\label{ineqd-1}
\mathcal{L}^{d-1}(G) \geq \frac{1}{2}c\alpha_1(T_\mathbf{u})\ldots \alpha_{d-1}(T_\mathbf{u}).
\end{equation}
for  $y=\Psi_\bu(y_0)\in G$, then there exists $x_0\in U_{-\delta}$ such that $\proj_{\Psi_\bu^{-1}\Pi} x_0=y_0$. Let $x=\Psi_\bu(x_0)$, then $\proj_\Theta x=y$ and $B(x,\delta\alpha_d(T_\bu)) \subset \Psi_\bu(U)$.
Hence
\begin{equation}\label{ineqd=1}
\mathcal{L}^1\{x\in \Psi_u(U): \proj_\Theta x=y\} \geq \delta \alpha_d(T_\mathbf{u}),
\end{equation}
for all $y\in G$. For all $x\in \Psi_ \mathbf{u}(U)$ such that $\proj_\Theta x \in G$, it is clear that the distance from $x$ to the set $E$ is no more that$ |U| \alpha_d(T_\mathbf{u})$. Combining \eqref{ineqd-1} and \eqref{ineqd=1} together, we obtain that
$$
\mathcal{L}^d\{x\in \Psi_\mathbf{u}(U)  :|x-z|\leq |U| \alpha_d(T_\mathbf{u}) \text{ for some } z\in E\} \geq \frac{1}{2}c\delta \alpha_1(T_\mathbf{u})\ldots \alpha_d(T_\mathbf{u}).
$$

Arbitrarily choose $0<\epsilon < 1$. For each $d-1<s<s^*$, the set $\Sigma^*(s,\epsilon)$ is independent of $s$. Note that OPC condition implies that   $\bigcup_{\mathbf{u}\in \Sigma^*(s,\epsilon)}\Psi_\bu(U)$ is a union of disjoint open sets.
For each $s^*< s \leq d$,
\begin{eqnarray*}
&& \mathcal{L}^d \{x\in \R^d :|x-z|\leq |U|\epsilon \text{ for some } z\in E\} \\
&\geq&  \sum_{\mathbf{u}\in \Sigma^*(s, \epsilon)} \mathcal{L}^d \{x\in \Psi_\bu(U) :|x-z|\leq |U|\alpha_d(T_\mathbf{u}) \text{ for some } z\in E\}\\
&\geq& \frac{1}{2}c\delta\alpha_-^{d-s}\epsilon^{d-s}\sum_{\mathbf{u}\in \Sigma^*(s, \epsilon)} \phi^s(T_\mathbf{u}).
\end{eqnarray*}

Note that the set $\{x\in \R^d :|x-z|\leq \epsilon \text{ for some } z\in E\}$ may be covered by $N(\epsilon)$ balls of radius $2\epsilon$ if $E$ is covered by $N(\epsilon)$ balls of radius $\epsilon$.  Therefore,
$$
\mathcal{L}^d \{x\in \R^d :|x-z|\leq \epsilon \text{ for some } z\in E\} \leq N(\epsilon)c_1 (2\epsilon)^d,
$$
where $c_1$ is the volume of the d-dimensional unit ball. It follows that
$$
N_{|U|\epsilon}(E) \geq 2^{-d-1}|U|^{-d}c_1^{-1}c\delta \alpha_-^{d-s}\epsilon^{-s} \sum_{\mathbf{u}\in \Sigma^*(\epsilon)} \phi^s(T_\mathbf{u}).
$$
By Lemma ~\ref{dimBL}, we obtain that $\overline{\dim}_{\rm B} E =  s^*$, and the conclusion holds.
\end{proof}

Next,  we prove that $s_A$ is a lower bound for the lower box-dimension of the non-autonomous affine set $E$.
\begin{proof}[Proof of Theorem~\ref{dimLBL}]
Since
$$
\mathcal{L}^{d-1}\{E\} \geq \mathcal{L}^{d-1}\{\proj_\Theta (J\cap \Psi_\bu^{-1}(E))\}\geq c
$$
for all $(d-1)$-dimensional subspaces $\Theta$, the Hausdorff dimension of $E$ is at least $d-1$. By Theorem ~\ref{dimHU}, $s_A \geq \dimh E \geq d-1$. If $s_A=d-1$, then
$$
\underline{\dim}_{\rm B} E \geq \dimh E \geq d-1=s_A.
$$
Hence it is sufficient to show $\underline{\dim}_{\rm B} E \geq s_A$ for s$_A>d-1$.

For every $d-1<s<s_A$, we have that  $\mathcal{M}^s(E)=\infty$. Hence there exists a constant $c_1>0$ such that for all sufficiently small $\epsilon>0$,
$$
\sum_{\mathbf{u}\in \Sigma^*(s, \epsilon)}\phi^s(T_\mathbf{u}) \geq c_1.$$
By the same argument as in Theorem~\ref{projc}, we have that
$$
\mathcal{L}^d \{x\in \R^d :|x-w|\leq |U|\epsilon \text{ for some } w\in E\}\geq \frac{1}{2}c\delta\alpha_-^{d-s}\epsilon^{d-s}\sum_{\mathbf{u}\in \Sigma^*(s, \epsilon)} \phi^s(T_\mathbf{u}).
$$
This implies that
$$
\mathcal{L}^d \{x\in \R^d :|x-w|\leq |U|\epsilon\text{ for some } w\in E\} \geq c_2\epsilon^{d-s},
$$
where $c_2=\frac{1}{2}c\delta\alpha_-^{d-s}$. Using the Minkowski definition of box-counting
dimension, see \cite{Bk_KJF2}, it follows that
$$
\underline{\dim}_{\rm B} E = d-\limsup_{r\to 0}\frac{\log \mathcal{L}^d \{x\in \R^d :|x-w|\leq r \text{ for some } w\in E\}}{\log r} \geq s,
$$
for all $s\leq s_A$. Hence $\underline{\dim}_{\rm B} E\geq  s_A$, and the conclusion holds.
\end{proof}

\begin{proof}[Proof of Corollary ~\ref{cor2}]
Since $E$ has a connected component that is not contained in any straight line, we choose  three non-collinear points $x_1,x_2,x_3$  in the same connected component of $E$. We write $c=\min\{\|x_i-x_j\| : i\neq j \}$.   Since $J\cap \Psi_\bu^{-1}(E)=E$, it implies that
$$
\mathcal{L}^1\{\proj_\Theta (J\cap \Psi_\bu^{-1}(E))\}=\mathcal{L}^1\{\proj_\Theta E\},
$$
and it is clear that
$$
\mathcal{L}^1\{\proj_\Theta (J\cap \Psi_\bu^{-1}(E))\}\geq c.
$$
By Theorem~\ref{projc} and Theorem~\ref{dimLBL}, the conclusion holds.

\end{proof}

\section{Hausdorff dimension of non-autonomous affine sets}\label{secH}

First, we give the proof that $s_A$ is an upper bound for the Hausdorff dimension of non-autonomous affine sets.
\begin{proof}[Proof of Proposition ~\ref{dimHU}]
Let $B$ be a sufficiently large ball such that $J\subset B$. Given $\delta > 0$, there exists an integer $k$ such that $|\Psi_\mathbf{u}(B)| < \delta$ for all $\bu$ such that  $|\mathbf{u}| \geq k$. Let $A$ be a covering set of $\Sigma^\infty$ such that $|\mathbf{u}| \geq k$ for each $\mathbf{u}\in A$. Then $E \subset \bigcup_{\mathbf{u}\in A} \Psi_\mathbf{u}(B)$. Each ellipsoid $\Psi_\mathbf{u}(B)$ is contained in a rectangular parallelepiped of side lengths $2|B|\alpha_1, \ldots, 2|B|\alpha_d$ where the $\alpha_i$ are the singular values of $T_\mathbf{u}$.

For each $s>s_A$, let $m$ be the least integer greater than or equal to $s$. We can divide such a parallelepiped into at most
$$
\Big(4|B|\frac{\alpha_1}{\alpha_m}\Big) \Big(4|B|\frac{\alpha_2}{\alpha_m}\Big)\ldots \Big(4|B|\frac{\alpha_{m-1}}{\alpha_m}\Big)\Big(4|B|\Big)^{d-m+1}
$$
cubes of side $\alpha_m$.

Taking such a cover of each ellipsoid $\Psi_\mathbf{u}(B)$ with $\mathbf{u}\in A$, it follows that
\begin{eqnarray*}
\mathcal{H}_{\sqrt{d}\delta}^s(E)&\leq& \sum_{\mathbf{u}\in A} (4|B|)^d\alpha_1 \alpha_2 \ldots \alpha_{m-1} \alpha_m^{1-m}(\sqrt{d}\alpha_m)^s\\
&\leq& (4|B|\sqrt{d})^d \sum_{\mathbf{u}\in A} \phi^s(T_\mathbf{u}).
\end{eqnarray*}
Since this holds for every covering set $A$ with $|\mathbf{u}| \geq k$ for $\mathbf{u}\in A$, we have that
$$
\mathcal{H}_{\sqrt{d}\delta}^s(E) \leq (4|B|\sqrt{d})^d \mathcal{M}_{(k)}^s(\Sigma^\infty).
$$
Letting $\delta$ tend to $0$, it follows that
$$
\mathcal{H}^s(E) \leq (4|B|\sqrt{d})^d \mathcal{M}^s(\Sigma^\infty) < \infty.
$$
Hence $\dimh E \leq s$ for all $s>s_A$, which implies that $\dimh E \leq s_A$.
\end{proof}

The following cited theorem gives a necessary and sufficient condition for absolute continuity of measures, which is very useful in our proofs, and we refer readers to \cite[Theorem 2.12]{Book_mattila} for details.
\begin{thm}\label{abcon}
Let $\mu$ and $\lambda$ be Radon measures on $\R^d$. Then $\mu \ll \lambda$ if and only if
$$
\liminf_{r\to 0} \frac{\mu(B(x,r))}{\lambda(B(x,r))}  <\infty,
$$
for $\mu$-almost all $x\in\R^d$.
\end{thm}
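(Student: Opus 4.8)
The whole argument rests on the Besicovitch covering theorem in $\R^d$: there is a constant $P(d)$, depending only on $d$, so that every fine cover of a set $A\subset\R^d$ by closed balls contains a countable subcover with overlap at most $P(d)$. I would apply it twice, once in each direction, to compare $\mu$ and $\lambda$ on a suitably chosen family of balls.

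For the ``if'' direction, suppose $D(x):=\liminf_{r\to0}\mu(B(x,r))/\lambda(B(x,r))<\infty$ for $\mu$-almost all $x$, and let $A$ be Borel with $\lambda(A)=0$; the target is $\mu(A)=0$. After discarding a $\mu$-null set I may assume $D<\infty$ on $A$, write $A=\bigcup_{k\ge1}A_k$ with $A_k=\{x\in A:D(x)<k\}$, and reduce to showing $\mu(A_k)=0$ for each $k$. Fixing $k$ and $\varepsilon>0$, outer regularity of the Radon measure $\lambda$ gives an open $U\supset A_k$ with $\lambda(U)<\varepsilon$; for every $x\in A_k$ there are arbitrarily small balls $B(x,r)\subset U$ with $\mu(B(x,r))<k\,\lambda(B(x,r))$, and feeding this fine cover into Besicovitch and summing yields $\mu(A_k)\le kP(d)\lambda(U)<kP(d)\varepsilon$. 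Letting $\varepsilon\to0$ gives $\mu(A_k)=0$, hence $\mu(A)=0$.

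For the ``only if'' direction, assume $\mu\ll\lambda$ and suppose, for contradiction, that the bad set $A=\{x:D(x)=\infty\}$ has $\mu(A)>0$. It suffices to prove $\lambda(A)=0$, since absolute continuity then forces $\mu(A)=0$. I would localize to a ball $V=B(0,n)$, on which $\mu$ is finite because $\mu$ is Radon; fixing $M>0$, every $x\in A\cap V$ has arbitrarily small balls $B(x,r)\subset V$ with $M\,\lambda(B(x,r))<\mu(B(x,r))$, so Besicovitch and summation give $\lambda(A\cap V)\le P(d)M^{-1}\mu(V)$. Sending $M\to\infty$ and then $n\to\infty$ gives $\lambda(A)=0$, the desired contradiction.

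The genuinely load-bearing input is the Besicovitch covering theorem, and in particular that its overlap constant depends only on $d$: this is what turns both covering estimates into finite bounds, and it is precisely where the Euclidean hypothesis is used (the statement can fail in general metric spaces). The remaining points are routine bookkeeping — the Borel measurability of $x\mapsto D(x)$, so that $A$ and the level sets $A_k$ are measurable, and the degenerate case $\lambda(B(x,r))=0$, which under $\mu\ll\lambda$ also forces $\mu(B(x,r))=0$ and hence does not affect any estimate.
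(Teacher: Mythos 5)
Your argument is correct. Note, however, that the paper does not prove this statement at all: it is quoted verbatim from Mattila's book (Theorem 2.12 there) and used as a black box, so there is no in-paper proof to compare against. What you have written is essentially the standard proof from that reference --- both implications reduced to covering estimates via the Besicovitch covering theorem, with the level-set decomposition $A_k=\{D<k\}$ and outer regularity in one direction and localization to balls $B(0,n)$ with $\mu(V)<\infty$ in the other --- and you correctly identify the only genuinely Euclidean ingredient (the dimension-dependent overlap constant) and the routine caveats (measurability of $D$, the $0/0$ convention).
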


To show that $s_A$ is also a lower bound for the Hausdorff dimension of non-autonomous affine sets, we need the following lemmas for the connection between integral estimates and singular value functions.

The first lemma shows that singular value functions provide useful estimates for potential integrals, which was proved by Falconer in~\cite[Lemma 2.1]{Falco88}. Let $\overline{B(0,\rho)}$ be the closed ball in $\R^d$ with centre at the origin and radius $\rho$.

\begin{lem}\label{lemtphi}
Let $s$ satisfy $0 < s\leq  n$ with $s$ non-integral.
Then there exists $c>0$ such that, for all non-singular
linear transformations $T\in {\mathcal{ L}}(\R^d,\R^d)$,
$$
\int_{\overline{B(0,\rho)}} \frac{dx}{|Tx|^{s}}
\leq \frac{c}{\phi^{s}(T)}.
$$
\end{lem}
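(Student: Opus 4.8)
The plan is to pass to a diagonal transformation and then integrate out one coordinate at a time, each step lowering the exponent $s$ by $1$ until it drops below $1$; this is the argument of \cite[Lemma 2.1]{Falco88}, which I sketch. First I would write the singular value decomposition $T=\Omega_1 D\Omega_2$ with $\Omega_1,\Omega_2$ orthogonal and $D=\diag(\alpha_1,\dots,\alpha_d)$, where $\alpha_j=\alpha_j(T)$ and $1>\alpha_1\ge\dots\ge\alpha_d>0$. The substitution $x\mapsto\Omega_2^{-1}x$ has unit Jacobian and fixes $\overline{B(0,\rho)}$ setwise, while $|\Omega_1 y|=|y|$; so the integral is unchanged on replacing $T$ by $D$, and it suffices to show
$$
I_d(s;\alpha_1,\dots,\alpha_d):=\int_{\overline{B(0,\rho)}}\bigl(\alpha_1^2 x_1^2+\dots+\alpha_d^2 x_d^2\bigr)^{-s/2}\,dx\ \le\ c\,\phi^s(D)^{-1},
$$
where $m=\lceil s\rceil$ and $\phi^s(D)=\alpha_1\cdots\alpha_{m-1}\alpha_m^{\,s-m+1}$.

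I would run an induction on $m$. For $m=1$, i.e. $0<s<1$: keeping only the leading term gives $\alpha_1^2x_1^2+\dots+\alpha_d^2x_d^2\ge\alpha_1^2 x_1^2$, hence $I_d\le\alpha_1^{-s}\int_{\overline{B(0,\rho)}}|x_1|^{-s}\,dx$, and the last integral is a finite constant depending only on $d,s,\rho$ since $s<1$; as $\phi^s(D)=\alpha_1^s$ here, the bound follows. For the step $m\ge2$ (so $s>1$, and $d\ge2$ since $s\le d$): fixing $x'=(x_2,\dots,x_d)$ with $Q:=\alpha_2^2x_2^2+\dots+\alpha_d^2x_d^2$, substituting $u=\alpha_1x_1$ and enlarging the $u$-range to $\R$,
$$
\int_{|x_1|\le\sqrt{\rho^2-|x'|^2}}\bigl(\alpha_1^2x_1^2+Q\bigr)^{-s/2}\,dx_1\ \le\ \alpha_1^{-1}\int_{\R}(u^2+Q)^{-s/2}\,du\ =\ c_s\,\alpha_1^{-1}\,Q^{-(s-1)/2},
$$
where $c_s=\int_{\R}(v^2+1)^{-s/2}\,dv<\infty$ precisely because $s>1$ (the set $Q=0$ is Lebesgue‑null, so this is harmless by Tonelli). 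Integrating over $x'$ in the closed $\rho$‑ball of $\R^{d-1}$ yields $I_d(s;\alpha_1,\dots,\alpha_d)\le c_s\,\alpha_1^{-1}\,I_{d-1}(s-1;\alpha_2,\dots,\alpha_d)$. Since $\lceil s-1\rceil=m-1$ and $\alpha_2\ge\dots\ge\alpha_d$ is still in decreasing order, the inductive hypothesis bounds this by a constant times $\alpha_1^{-1}(\alpha_2\cdots\alpha_{m-1}\alpha_m^{\,s-m+1})^{-1}=\phi^s(D)^{-1}$, completing the induction.

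The constant obtained is uniform in $T$ because it is merely the product of the base‑case constant with the $m-1\le d-1$ factors $c_s$ coming from the successive reductions, all of which depend only on $d$, $s$, $\rho$. I do not expect a genuine obstacle; the only point requiring care is bookkeeping: each reduction must stay in the admissible range $0<s-k\le d-k$, which holds automatically because $s$ is non-integral with $s\le d$, so the exponent decreases through the integers and lands in $(0,1)$ where the base case applies — and it is exactly the monotone ordering of the singular values that lets the leading term be discarded in the base step and makes the factor $\alpha_1^{-1}$ peel off $\phi^s(D)$ at each inductive step.
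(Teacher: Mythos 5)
Your argument is correct, and in fact the paper offers no proof of this lemma at all --- it is quoted verbatim as Lemma 2.1 of Falconer's 1988 paper --- while your reduction to a diagonal matrix via the singular value decomposition, followed by integrating out one coordinate at a time (each step contributing a factor $\alpha_j^{-1}$ and lowering the exponent by $1$, with the base case $0<s<1$ handled by discarding all but the term $\alpha_1^2x_1^2$) is precisely the argument of that cited source, with the bookkeeping $\phi^{s-1}(\diag(\alpha_2,\dots,\alpha_d))=\alpha_2\cdots\alpha_{m-1}\alpha_m^{\,s-m+1}$ done correctly. The only cosmetic point is that the ``$n$'' in the statement should be read as $d$, as you implicitly did.
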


The next lemma provides  a technical result on net measures.
\begin{lem}\label{measure_ineq}
Let $\mathcal{M}^s$ be the Borel measure defined by ~\eqref{measure}. If
$\mathcal{M}^s(\Sigma^\infty)>0$ then there exists a
Borel measure $\mu$ on $\Sigma^\infty$ such that
$0<\mu(\Sigma^\infty)<\infty$  and  a constant  $c>0$ such that
\begin{equation}\label{frost}
    \mu(\mathcal{C}_\bu)\leq c \phi^s(T_\mathbf{u}) ,
\end{equation}
for all  $ \bu \in \Sigma^*$.
\end{lem}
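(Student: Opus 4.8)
The plan is to use a finite-level max-flow/min-cut argument followed by a weak-$*$ compactness argument; this is what replaces the naive top-down construction of $\mu$, which fails precisely because $\sum_{\bu\in\Sigma^k}\phi^s(T_\bu)$ is not submultiplicative, so the $\phi^s$-weights of the $n_k$ children of a cylinder need not add up in a controllable way.

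First I would reduce to a fixed level. Since $\mathcal{M}_{(k)}^s(\Sigma^\infty)$ increases to $\mathcal{M}^s(\Sigma^\infty)>0$, fix an integer $K$ with $c_0:=\mathcal{M}_{(K)}^s(\Sigma^\infty)>0$, and note that, since $\Sigma^K$ is itself a covering set with all words of length $\ge K$, one has $c_0\le C_1:=\sum_{\bu\in\Sigma^K}\phi^s(T_\bu)<\infty$. Only the bound $\mu(\mathcal{C}_\bu)\le c\,\phi^s(T_\bu)$ for $|\bu|\ge K$ will require work: for the finitely many words with $|\bu|<K$ one simply uses $\mu(\mathcal{C}_\bu)\le\mu(\Sigma^\infty)\le C_1\le (C_1/\rho)\,\phi^s(T_\bu)$ with $\rho:=\min\{\phi^s(T_\bu):|\bu|<K\}>0$.

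Next, for each $m\ge K$ I would build a finite capacitated network on the tree of words of length at most $m$: split every node $\bu$ into an edge $\bu_{\mathrm{in}}\to\bu_{\mathrm{out}}$ of capacity $+\infty$ if $|\bu|<K$ and of capacity $\phi^s(T_\bu)$ if $K\le|\bu|\le m$; adjoin a source $S$ with an edge to $\emptyset_{\mathrm{in}}$, the tree edges $\bu_{\mathrm{out}}\to(\bu i)_{\mathrm{in}}$, and edges $\bu_{\mathrm{out}}\to\Delta$ for $|\bu|=m$, all of capacity $+\infty$. A cut of finite capacity can only consist of node-edges at levels in $[K,m]$, and the corresponding set of words, once comparable elements are deleted, is a finite antichain that covers $\Sigma^\infty$ with all words of length $\ge K$; hence every such cut has capacity at least $\mathcal{M}_{(K)}^s(\Sigma^\infty)=c_0$, while $\Sigma^K$ gives a cut of capacity $\le C_1$. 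By the max-flow/min-cut theorem there is a flow $f_m$ of value in $[c_0,C_1]$; writing $f_m(\mathbf{w})$ for the amount delivered to $\Delta$ through the leaf $\mathbf{w}\in\Sigma^m$, flow conservation along the tree yields, for every $\bu$ with $K\le|\bu|\le m$,
$$
\sum_{\mathbf{w}\in\Sigma^m,\ \mathbf{w}\succeq\bu}f_m(\mathbf{w})\le\phi^s(T_\bu),\qquad \sum_{\mathbf{w}\in\Sigma^m}f_m(\mathbf{w})\in[c_0,C_1].
$$

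Finally I would pass to the limit. Spreading each mass $f_m(\mathbf{w})$ over the cylinder $\mathcal{C}_{\mathbf{w}}$ according to a fixed reference probability measure on $\Sigma^\infty$ produces a Borel measure $\mu_m$ with $\mu_m(\mathcal{C}_\bu)=\sum_{\mathbf{w}\succeq\bu,\,|\mathbf{w}|=m}f_m(\mathbf{w})$ for $|\bu|\le m$ and total mass in $[c_0,C_1]$. As $\Sigma^\infty$ is compact and each cylinder is clopen, the uniformly bounded family $\{\mu_m\}_{m\ge K}$ has a weak-$*$ convergent subsequence $\mu_{m_j}\rightharpoonup\mu$, and by clopenness $\mu(\mathcal{C}_\bu)=\lim_j\mu_{m_j}(\mathcal{C}_\bu)$ for every fixed $\bu$; in particular $0<c_0\le\mu(\Sigma^\infty)\le C_1<\infty$ and $\mu(\mathcal{C}_\bu)\le\phi^s(T_\bu)$ whenever $|\bu|\ge K$. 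Combined with the short-word estimate this gives \eqref{frost} with $c=\max\{1,C_1/\rho\}$. The only delicate points are the identification of the finite cuts of the network with antichain covering sets of words of length $\ge K$ (so that the min-cut is bounded below by $\mathcal{M}_{(K)}^s(\Sigma^\infty)$) and the fact that no mass escapes in the weak-$*$ limit, which is automatic from compactness of $\Sigma^\infty$; the genuine conceptual obstacle, namely the failure of submultiplicativity, is exactly what the max-flow/min-cut step is designed to bypass.
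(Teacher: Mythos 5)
Your proof is correct, but it takes a genuinely different route from the paper's. The paper disposes of the lemma in two lines by invoking the ``subsets of finite measure'' theorem for net (Method II) measures --- Theorem 5.4 of Falconer's \emph{Geometry of Fractal Sets}, or Theorem 54 of Rogers --- to get a compact $G\subset\Sigma^\infty$ with $\mathcal{M}^s(G)>0$ and $\mathcal{M}^s(G\cap\mathcal{C}_\bu)\le c\,\phi^s(T_\bu)$ for all $\bu$, and then simply sets $\mu(A)=\mathcal{M}^s(G\cap A)$. You instead run the max-flow/min-cut proof of Frostman's lemma directly on the tree $\Sigma^*$: the identification of finite-capacity cuts with covering sets consisting of words of length in $[K,m]$ gives the min-cut lower bound $\mathcal{M}^s_{(K)}(\Sigma^\infty)=c_0>0$, the cut $\Sigma^K$ gives the finite upper bound $C_1$, flow conservation gives $\sum_{\mathbf{w}\succeq\bu}f_m(\mathbf{w})\le\phi^s(T_\bu)$ for $K\le|\bu|\le m$, and weak-$*$ compactness of the uniformly bounded family $\{\mu_m\}$ on the compact space $\Sigma^\infty$ (cylinders being clopen, their masses pass to the limit) produces $\mu$. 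All the steps check out, including the separate treatment of the finitely many words of length $<K$ via $\rho=\min_{|\bu|<K}\phi^s(T_\bu)>0$, which is positive because the paper assumes all matrices in the $\Xi_k$ are nonsingular. The trade-off: the paper's argument is much shorter and yields the extra information that $\mu$ is the restriction of the net measure $\mathcal{M}^s$ itself to a compact set of positive finite measure, whereas yours is self-contained and elementary, using only the definition of $\mathcal{M}^s_{(k)}$, max-flow/min-cut on a finite network, and Riesz/Banach--Alaoglu, and so avoids the increasing-sets machinery behind the subset theorem; both correctly sidestep the failure of submultiplicativity that blocks a naive top-down mass distribution.
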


\begin{proof}
Since $\mathcal{M}^s(\Sigma^\infty)>0$, by a similar argument to Theorem 5.4 in \cite{Book_KJF1}, there exists a compact $G \subset \Sigma^\infty$ such that $\mathcal{M}^s(G)>0$ and
$$
\mathcal{M}^s(G\cap \mathcal{C}_\bu) \leq c\phi^s(T_\mathbf{u}),
$$
for all $\bu \in \Sigma^*$, where $c>0$ is a constant independent of $\bu$.  (Alternatively, This may be regarded as  a special case of Theorem 54 in \cite{Bk_Rogers}.)

We write $\mu$ for the measure given  by
$$
\mu(A) = \mathcal{M}^s(G\cap A),
$$
for all Borel $A \subset \Sigma^{\infty}$, and the measure $\mu$  has the desired properties.
\end{proof}

Recall that  $\Gamma=\{a_1,\ldots, a_\tau\}$ is a finite collection of translations, where $a_1,\ldots, a_\tau$ are regarded as variables in $\R^d$. For each $\mathbf{u}=u_1\ldots u_k \in \Sigma^*$,  $J_{\mathbf{u}}=\Psi_{\mathbf{u}}(J)=\Psi_{u_1}\circ  \ldots \circ \Psi_{u_k} (J)$. Suppose that the translation of $\Psi_{u_j}$ is an element of $\Gamma$, that is,
 $$
\Psi_{u_j}(x)=T_{j,u_j}x+ \w_{u_1\ldots u_j}, \qquad \w_{u_1\ldots u_j}\in \Gamma,
  $$
 for $j=1,2,\ldots, k$. We write $\ba=(a_1,\ldots, a_\tau)$ as a variable in $\R^{\tau d}$. To emphasize the dependence on these special translations in $\Gamma$,  we denote the non-autonomous affine set by $E^\ba$ and denote the projection $\Pi$  by $\Pi^\ba$, that is
\begin{eqnarray*}
\Pi^\ba (\bu) & =&   \w_{u_1} +T_{1,u_1}\w_{u_1u_2}+ \cdots + T_{\bu|k}\w_{\bu|k+1}   + \cdots,   \qquad \w_{\bu|k}\in \Gamma.
\end{eqnarray*}
It is clear that $E^\ba=\Pi^\ba(\Sigma^\infty)$. We write $\overline{\mathbf{B}(0,\rho)}$ for the closed ball in $\mathbf{R}^{\tau d}$ with centre the origin and radius $\rho$.

\begin{lem}\label{lemHD}
Let $s$ satisfy $0 < s < s_A$ with $s$ non-integral.
Suppose that there exists a constant $c>0$ such that, for all $\bu,\bv\in \Sigma^\infty$, $\bu \neq \bv$,
\begin{equation}\label{ineqPAP}
\int_{\overline{\mathbf{B}(0,\rho)}} \frac{d \ba}{|\Pi^\ba(\bu)-\Pi^\ba(\bv)|^{s}}
\leq \frac{c}{\phi^{s}(T_{\bu \wedge \bv})}.
\end{equation}
Then
$$
\dimh E^\ba \geq s_A,
$$
for  $\mathcal{L}^{\tau d}$-almost all $\ba$.
\end{lem}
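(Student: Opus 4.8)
The plan is to run the potential-theoretic (energy) method: build a suitably spread-out measure on $\Sigma^\infty$, push it forward to $E^\ba$, and show that for $\mathcal{L}^{\tau d}$-almost every $\ba$ the pushforward has finite $s$-energy, so that $\dimh E^\ba\ge s$. First I would fix the non-integral $s<s_A$ of the hypothesis, put $m=\lceil s\rceil$ (so $m-1<s<m$), and choose an auxiliary exponent $s'$ with $s<s'<\min\{m,s_A\}$; this interval is non-empty because $s<m$ and $s<s_A$. Since $s'<s_A$ we have $\mathcal{M}^{s'}(\Sigma^\infty)=\infty>0$, so Lemma~\ref{measure_ineq} supplies a Borel measure $\mu$ on $\Sigma^\infty$ with $0<\mu(\Sigma^\infty)<\infty$ and $\mu(\mathcal{C}_\bu)\le c_1\phi^{s'}(T_\bu)$ for all $\bu\in\Sigma^*$. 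Its projection $\mu^\ba$ (as in \eqref{def_ua}) is a finite Borel measure carried by $E^\ba=\Pi^\ba(\Sigma^\infty)$ with $\mu^\ba(\R^d)=\mu(\Sigma^\infty)>0$; and $\mu$ is non-atomic, since $\mu(\{\bu\})=\lim_k\mu(\mathcal{C}_{\bu|k})\le c_1\lim_k\phi^{s'}(T_{\bu|k})\le c_1\lim_k\alpha_+^{s'k}=0$.

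The core is the estimate of the averaged $s$-energy $I_s(\nu)=\int\int|x-y|^{-s}\,d\nu(x)\,d\nu(y)$. By Tonelli's theorem and the defining relation \eqref{def_ua2} of the pushforward,
$$
\int_{\overline{\mathbf{B}(0,\rho)}} I_s(\mu^\ba)\,d\ba \;=\; \int\int\bigg(\int_{\overline{\mathbf{B}(0,\rho)}}\frac{d\ba}{|\Pi^\ba(\bu)-\Pi^\ba(\bv)|^{s}}\bigg)\,d\mu(\bu)\,d\mu(\bv),
$$
and the hypothesis \eqref{ineqPAP} bounds the inner integral by $c\,\phi^{s}(T_{\bu\wedge\bv})^{-1}$, so the left side is at most $c\int\int\phi^{s}(T_{\bu\wedge\bv})^{-1}\,d\mu(\bu)\,d\mu(\bv)$. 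Decomposing according to $\mathbf{w}=\bu\wedge\bv$ (the pairs with $\bu\wedge\bv=\mathbf{w}$ lie in $\mathcal{C}_\mathbf{w}\times\mathcal{C}_\mathbf{w}$, and the diagonal is $\mu$-null),
$$
\int\int\frac{d\mu(\bu)\,d\mu(\bv)}{\phi^{s}(T_{\bu\wedge\bv})}=\sum_{k=0}^\infty\sum_{\mathbf{w}\in\Sigma^k}\frac{(\mu\times\mu)\{\bu\wedge\bv=\mathbf{w}\}}{\phi^{s}(T_\mathbf{w})}\le\sum_{k=0}^\infty\sum_{\mathbf{w}\in\Sigma^k}\frac{\mu(\mathcal{C}_\mathbf{w})^2}{\phi^{s}(T_\mathbf{w})}.
$$
Applying $\mu(\mathcal{C}_\mathbf{w})\le c_1\phi^{s'}(T_\mathbf{w})$ to one of the two factors, and using $\phi^{s'}(T_\mathbf{w})/\phi^{s}(T_\mathbf{w})=\alpha_m(T_\mathbf{w})^{s'-s}\le\alpha_+^{(s'-s)k}$ (valid since $m-1<s<s'<m$ and $\alpha_m(T_\mathbf{w})\le\alpha_1(T_\mathbf{w})\le\alpha_+^{k}$ by \eqref{alpha}), the right-hand side is at most
$$
c_1\sum_{k=0}^\infty\alpha_+^{(s'-s)k}\sum_{\mathbf{w}\in\Sigma^k}\mu(\mathcal{C}_\mathbf{w})=c_1\,\mu(\Sigma^\infty)\sum_{k=0}^\infty\alpha_+^{(s'-s)k}<\infty,
$$
the geometric series converging because $0<\alpha_+<1$ and $s'>s$.

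From the finiteness of the averaged energy I would conclude that $\int\int|\Pi^\ba(\bu)-\Pi^\ba(\bv)|^{-s}\,d\mu(\bu)\,d\mu(\bv)<\infty$ for $\mathcal{L}^{\tau d}$-a.e.\ $\ba\in\overline{\mathbf{B}(0,\rho)}$; in particular $\Pi^\ba(\bu)\ne\Pi^\ba(\bv)$ for $(\mu\times\mu)$-a.e.\ $(\bu,\bv)$ with $\bu\ne\bv$, so (discarding the diagonal by non-atomicity of $\mu$) the genuine $s$-energy $I_s(\mu^\ba)$ is bounded above by this symbolic integral and is finite. Since $\mu^\ba$ is a non-trivial finite measure supported on $E^\ba$, the classical energy lower bound for Hausdorff dimension gives $\dimh E^\ba\ge s$ for all such $\ba$. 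Exhausting $\R^{\tau d}$ by the balls $\overline{\mathbf{B}(0,\rho)}$ with $\rho\in\N$ yields $\dimh E^\ba\ge s$ for $\mathcal{L}^{\tau d}$-a.e.\ $\ba\in\R^{\tau d}$; and since the argument invokes \eqref{ineqPAP} only for the fixed $s$, applying it along a sequence of non-integral exponents increasing to $s_A$ and intersecting the resulting conull sets produces $\dimh E^\ba\ge s_A$ for $\mathcal{L}^{\tau d}$-a.e.\ $\ba$.

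The step I expect to be the main obstacle is the finiteness of $\int\int\phi^{s}(T_{\bu\wedge\bv})^{-1}\,d\mu(\bu)\,d\mu(\bv)$: if $\mu$ were taken at the level $s$ itself, the Frostman bound $\mu(\mathcal{C}_\bu)\le c\phi^{s}(T_\bu)$ can be essentially sharp and then this sum diverges term by term in $k$, so the crux is precisely to draw $\mu$ from a strictly larger exponent $s'$ in order to extract the decaying factor $\alpha_+^{(s'-s)k}$ from the mismatch between $\phi^{s'}$ and $\phi^{s}$. The remaining points---justifying the Tonelli interchange, passing from the symbolic double integral to the true $s$-energy of $\mu^\ba$ on $\R^d$ (controlling the coincidence set $\{\Pi^\ba(\bu)=\Pi^\ba(\bv)\}$), and handling the quantification over $s$ needed to reach $s_A$---are routine.
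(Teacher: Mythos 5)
Your proposal is correct and follows essentially the same route as the paper: draw $\mu$ from Lemma~\ref{measure_ineq} at an auxiliary exponent strictly between $s$ and $s_A$, use Tonelli together with the hypothesis \eqref{ineqPAP} to bound the averaged $s$-energy by $\sum_{\mathbf{p}}\mu(\mathcal{C}_{\mathbf{p}})^2/\phi^s(T_{\mathbf{p}})$, extract the geometric factor $\alpha_+^{(s'-s)k}$ from the mismatch of exponents, and conclude via the potential-theoretic method. The only (harmless) divergence is your extra restriction $s'<\min\{m,s_A\}$; the paper's bound $\phi^{t}(T_{\mathbf{p}})/\phi^{s}(T_{\mathbf{p}})\le\alpha_+^{k(t-s)}$ holds without confining $t$ to the same unit interval as $s$, so this care is not needed.
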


\begin{proof}
For each $s<s_A$, choose $t$ such that $s<t<s_A$, then $\mathcal{M}^t(\Sigma^\infty)>0$. By Lemma~\ref{measure_ineq}, there exists a
Borel measure $\mu$ on $\Sigma^\infty$ such that
$0<\mu(\Sigma^\infty)<\infty$ and a constant $c_1>0$ such that
\begin{equation}\label{ineqmph}
\mu(\mathcal{C}_\bu) \leq c_1 \phi^t(T_\bu),
\end{equation}
for all $\bu \in \Sigma^*$.
By Tonelli Theorem and \eqref{ineqPAP}, we have that
\begin{eqnarray*}
\int_{\ba\in\overline{\mathbf{B}(0,\rho)}}\int_{\Sigma^\infty}\int_{\Sigma^\infty}
\frac{d\mu(\bu)d\mu(\bv)d\ba}{|\Pi^\ba(\bu)-\Pi^\ba(\bv)|^s}
&=& \int_{\Sigma^\infty}\int_{\Sigma^\infty}\int_{\ba\in
\overline{\mathbf{B}(0,\rho)}}
\frac{d\ba d\mu(\bu)d\mu(\bv)}{|\Pi^\ba(\bu)-\Pi^\ba(\bv)|^s}\\
&\leq & c\int_{\Sigma^\infty}\int_{\Sigma^\infty}
\frac{d\mu(\bu)d\mu(\bv)}{\phi^s(T_{\bu\wedge\bv})}
\label{intest}\\
&\leq& c\sum_{\mathbf{p}\in \Sigma^*} \sum_{\bu'\wedge \bv'= \emptyset}
\frac{\mu(\mathcal{C}_{\mathbf{p}\bu'})
\mu(\mathcal{C}_{\mathbf{p}\bv'})}{\phi^s(T_{\mathbf{p}})}\nonumber\\
&\leq& c\sum_{\mathbf{p}\in \Sigma^*}
\frac{\mu(\mathcal{C}_{\mathbf{p}})^2}{\phi^s(T_{\mathbf{p}})}.\nonumber
\end{eqnarray*}

Combining \eqref{ineqmph},\eqref{alhpha+-} and \eqref{aspalpha}, it follows that
\begin{eqnarray*}
\sum_{\mathbf{p}\in \Sigma^*}
\frac{\mu(\mathcal{C}_{\mathbf{p}})^2}{\phi^s(T_{\mathbf{p}})}&\leq& c_1 \sum_{k=0}^{\infty}\sum_{\mathbf{p}\in \Sigma^k}
\frac{\phi^t(T_{\mathbf{p}})\mu(
\mathcal{C}_{\mathbf{p}})}{\phi^s(T_{\mathbf{p}})}\nonumber\\
&\leq& c_1 \sum_{k=0}^{\infty}\sum_{\mathbf{p}\in \Sigma^k}
\alpha_{+}^{k(t-s)}\mu(\mathcal{C}_{\mathbf{p}})\nonumber\\
&\leq& c_1 \mu(\Sigma^\infty) \sum_{k=0}^{\infty}\alpha_{+}^{k(t-s)}\nonumber \\
&< & \infty.
\end{eqnarray*}
Therefore the following estimate
$$
\int_{\Sigma^\infty}\int_{\Sigma^\infty}
\frac{d\mu(\bu)d\mu(\bv)}{|\Pi^\ba(\bu)-\Pi^\ba(\bv)|^s}<\infty
$$
holds for $\mathcal{L}^{\tau d}$-almost all $\ba\in
\R^{\tau d}$. By Potential theoretic method, see \cite[Theorem 4.13]{Bk_KJF2}, we obtain that $\dimh E^\ba\geq s$ for $\mathcal{L}^{\tau d}$-almost all $\ba\in
\R^{\tau d}$. Since $s<s_A$ is arbitrarily chosen, the conclusion holds.
\end{proof}

Finally, we prove that $s_A$ gives the Hausdorff dimension of non-autonomous affine sets $E^\ba$ almost surely.
\begin{proof}[Proof of Theorem~\ref{dimHL}]
Recall that
$$
\Pi^\ba (\bu)  =   \w_{u_1} +T_{1,u_1}\w_{u_1u_2}+ \cdots +  T_{\bu|k}\w_{\bu|k+1}   + \cdots,   \qquad \w_{\bu|k}\in \Gamma,
$$
where $\Gamma=\{a_1,\ldots, a_\tau\}$. For $\bu, \bv\in \Sigma^\infty$, we assume that $|\bu \wedge \bv|=n$. Without loss of generality, suppose that $\w_{\bu|{n+1}}=a_1$, $w_{\bv|{n+1}}=a_2$. Then
\begin{eqnarray*}
\Pi^\ba(\bu)-\Pi^\ba(\bv)&=& T_{\bu\wedge\bv}\Big(\w_{\bu|n+1}-\w_{\bv|n+1}+ (T_{n+1,u_{n+1}}\w_{\bu|n+2} \\
&& +T_{n+1,u_{n+1}}T_{n+2,u_{n+2}} \w_{\bu|{n+3}}+\ldots)
-(T_{n+1,v_{n+1}}\w_{\bv|n+2} \\
&& +T_{n+1,v_{n+1}}T_{n+2,v_{n+2}} \w_{\bu|{n+3}} +\ldots)\Big) \\
&=& T_{\bu\wedge\bv}(a_1-a_2+ H(\ba)),
\end{eqnarray*}
where $H$ is a linear map from $\R^{\tau d}$ to $\R^d$. We may write $H(\ba)=\sum_{a_j\in\Gamma \atop 1\leq j\leq \tau} H_j(a_j)$.

We write
\begin{eqnarray*}
\eta &=&\sup\{\|T_{k,j}\| : T_{k,j}\in \Xi_k, 0<j\leq n_k,  k>0 \},
\end{eqnarray*}
and we have that  $\eta < \frac{1}{2}.$  Let
$$
m=\inf\{k\geq 2:\w_{\bu|{n+k}}=\w_{\bv|{n+k}}\}.
$$
If $m=\infty$, it is straightforward that
$$
\|H_1\|\leq \sum_{k=2}^\infty \eta^{k-1} < 1.
$$
Otherwise for $m<\infty$, it is clear that $\w_{\bu|{n+m}}$ and $\w_{\bv|{n+m}}$ can not  equal to $a_1$ and $a_2$ simultaneously. We assume that $\w_{\bu|{n+m}}=\w_{\bv|{n+m}}\neq a_1$. Since $\eta <\frac{1}{2}$,  it follows that
\begin{eqnarray*}
\|H_1\|&\leq& \sum_{k=2}^{m-1} \eta^{k-1}+ \sum_{k=m+1}^{\infty} 2\eta^{k-1}<1 
\end{eqnarray*}
This implies that  the linear transformation $I+H_1$ is invertible. We define  the  invertible linear transformation by
\begin{equation}\label{inlitr}
y=a_1-a_2+ H(\ba), \ a_2=a_2,\ldots,\ a_\tau=a_\tau.
\end{equation}

For $s_A\leq d$, arbitrarily choosing a non-integral $0<s<s_A $, by Lemma~\ref{lemtphi} and \eqref{inlitr}, it follows that
\begin{eqnarray*}
\int_{\overline{\mathbf{B}(0,\rho)}}\frac{d\ba}{|\Pi^\ba(\bu)-\Pi^\ba(\bv)|^s} &=& \int_{\overline{\mathbf{B}(0,\rho)}}\frac{d\ba}{|T_{\bu\wedge\bv} (a_1-a_2+ H(\ba))|^s}\\
&\leq& c_1 \int \ldots \int_{y\in B_{(2+k)\rho} \atop a_j\in B_\rho} \frac{dy da_2 \ldots}{|T_{\bu\wedge\bv}(y)|^s}\\
&\leq& \frac{c}{\phi^{s}(T_{\bu \wedge \bv})}.
\end{eqnarray*}
Immediately, Lemma ~\ref{lemHD} implies that
$$
\dimh E^\ba \geq s_A
$$
for $\mathcal{L}^{\tau d}$-almost all $\ba$. Combining  Theorem ~\ref{dimHU}, the equality
$$
\dimh E^\ba = s_A
$$
holds for $\mathcal{L}^{\tau d}$-almost all $\ba$.

Next, we prove that $\mathcal{L}^d(E^\ba)>0$ for $s_A>d$.
Arbitrarily choose $d<s<s_A$. Then by the definition of $s_A$, we have that $\mathcal{M}^s(\Sigma^\infty)=\infty$. By Lemma ~\ref{measure_ineq}, there exists a
Borel measure $\mu$ on $\Sigma^\infty$ such that
$0<\mu(\Sigma^\infty)<\infty$ and a constant $c_1>0$ such that
\begin{equation}\label{ineqmuphi}
\mu(\mathcal{C}_\bu) \leq c_1 \phi^s(T_\bu),
\end{equation}
for all $\bu \in \Sigma^*.$
Let  $\mu^\ba$ be the projection measure  of $\mu$, that is $\mu^\ba=\mu\circ (\Pi^\ba)^{-1}$.  It is clear that
$$\mu^\ba(E^\ba)=\mu(\Sigma^\infty)>0.$$

To prove that $\mathcal{L}^d(E^\ba)>0$, we just need to prove $\mu_\ba \ll \mathcal{L}^d$. By Theorem~\ref{abcon}, it is equivalent to show that for $\mu^\ba$-almost all $x$,
$$\liminf_{r\to 0} \frac{\mu^\ba(B(x,r))}{ r^d}<\infty.$$

For simplicity, we write $\alpha_1,\ldots,\alpha_d$ for the singular values of
$T_{\bu\wedge\bv}$. For all distinct $\bu,\bv\in \Sigma^\infty$ and $\rho>0$, by \eqref{inlitr}, we have that
\begin{eqnarray}
&&\mathcal{L}^{\tau d}\{\ba \in \R^{\tau d}: |\Pi^\ba(\bu)-\Pi^\ba(\bv)|\leq \rho\}  = \int \I_{\{\ba \in \R^{\tau d}:|T_{\bu\wedge\bv}(a_1-a_2+ H(\ba))|\leq \rho\}}d\ba \nonumber\\
&&\leq c \int \I_{\{(y,a_2,\ldots,a_\tau) \in \R^{\tau d}:|T_{\bu\wedge\bv}(y)|\leq \rho\}}d y da_2\ldots da_\tau \nonumber\\
&&\leq c \mathcal{L}^d \{T_{\bu\wedge\bv}^{-1}(B(0,\rho))\}    \nonumber\\
&&\leq c \mathcal{L}^d \Big\{(x_1,\ldots,x_d)\in\R^d: |x_1|\leq \frac{\rho}{\alpha_1},\ldots,|x_d|\leq \frac{\rho}{\alpha_d}\Big\}      \nonumber\\
&&= c\cdot \frac{\rho^d}{\phi^d(T_{\bu\wedge\bv})}, \nonumber
\end{eqnarray}
where $c$ is a constant. Applying Fatou's Lemma and Fubini's Theorem, this implies that
\begin{eqnarray*}
&&\iint \liminf_{r\to 0} \frac{\mu^\ba(B(x,r))}{ r^d}d\mu^\ba(x) d\ba\\
&\leq& \liminf_{r\to 0} \frac{1}{ r^d} \iiint \I_{\{(\bu,\bv):|\Pi^\ba(\bu)-\Pi^\ba(\bv)|\leq r\}} d\mu(\bu)d\mu(\bv)d\ba\\
&\leq& \liminf_{r\to 0} \frac{1}{ r^d} \iint \mathcal{L}^{\tau d}\{\ba \in \R^{\tau d}: |\Pi^\ba(\bu)-\Pi^\ba(\bv)|\leq \rho\} d\mu(\bu)d\mu(\bv)\\
&\leq& c \iint \frac{1}{\phi^d(T_{\bu\wedge\bv})}d\mu(\bu)d\mu(\bv).
\end{eqnarray*}

Furthermore, by~\eqref{ineqmuphi} and \eqref{alhpha+-}, we have that for $\bu\in\Sigma^k$,
\begin{equation}\label{ineq_muphi}
\frac{\mu(\mathcal{C}_\bu)}{\phi^d(T_\bu)}\leq \frac{c_1\cdot \phi^s(T_\bu)}{\phi^d(T_\bu)} \leq c_1 \phi^d(T_\bu)^\frac{s-d}{d} \leq c_1 \alpha_+^{(s-d)k}.
\end{equation}
By the similar argument as above, we have that
\begin{eqnarray*}
 \iint \frac{1}{\phi^d(T_{\bu\wedge\bv})}d\mu(\bu)d\mu(\bv)
&\leq& \sum_{\mathbf{p}\in \Sigma^*} \sum_{\bu'\wedge \bv'= \emptyset}
\frac{\mu(\mathcal{C}_{\mathbf{p}\bu'})
\mu(\mathcal{C}_{\mathbf{p}\bv'})}{\phi^d(T_{\mathbf{p}})}\\
&\leq& c_1\sum_{k=0}^\infty \sum_{\mathbf{p}\in \Sigma^k} \alpha_+^{(s-d)k} \mu(\mathcal{C}_{\mathbf{p}})\\
&<&\infty.
\end{eqnarray*}
Hence for $\mathcal{L}^{\tau d}$-almost all $\ba$,   the inequality
$$\liminf_{r\to 0} \frac{\mu^\ba(B(x,r))}{ r^d}<\infty$$
holds for $\mu^\ba$-almost all $x$. By Theorem~\ref{abcon}, this implies that  $\mu^\ba \ll \mathcal{L}^d$. Since $\mu^\ba(E^\ba)>0$, it implies that $\mathcal{L}^d(E^\ba)>0$ for $\mu^\ba$-almost all $x$, and  the conclusion holds.

\end{proof}

\section{Affine Moran set with random translations}\label{secASA}
Recall that $\mathcal{D}$ is a bounded region in $\R^d$. For each $\bu\in \Sigma^* $,  the translation $\omega_\bu\in  \mathcal{D} $ is an independent random vector identically distributed
according to the probability measure $P=P_\bu$ which is absolutely continuous with respect to $d$-dimensional Lebesgue measure. The product probability measure $\mathbf{P} $ on
the family  $\omega =\{\omega_\bu : \bu \in  \Sigma^* \}$ is given by
$$
\mathbf{P} = \prod_{\bu\in \Sigma^*} P_\bu.
$$
For each $\mathbf{u}=u_1\ldots u_k \in \Sigma^*$, let $J_{\mathbf{u}}=\Psi_{\mathbf{u}}(J)=\Psi_{u_1}\circ  \ldots \circ \Psi_{u_k} (J)$, where the translation of $\Psi_{u_j}$ is an element of $\w$, that is,
 $$
\Psi_{u_j}(x)=T_{j,u_j}x+ \w_{u_1\ldots u_j}, \qquad \w_{u_1\ldots u_j}\in \omega =\{\omega_\bu : \bu \in  \Sigma^* \},
  $$
 for $j=1,2,\ldots, k$.
Assume that the collection $\mathcal{J}=\{J_\bu: \bu\in \Sigma^*\}$ fulfils the non-autonomous structure.
Since
\begin{eqnarray*}
\Pi^\w(\bu) &=&\w_{u_1} +T_{1,u_1}\w_{u_1u_2} + \cdots+ T_{\bu|k}\w_{\bu|k+1}   + \cdots,
\end{eqnarray*}
where $\omega_{\bu| j}\in  \mathcal{D} $, j=1,2,3\ldots, are random vectors. The points $\Pi^\omega(\bu) \in \mathbb{R}^d$ are  random points whose aggregate form the random set $E^\omega$, that is,
\begin{equation*}
E^{ \w} = \bigcup_{\bu \in \Sigma^\infty} \Pi^\w(\bu) \subset
\R^d, \label{2.6}
\end{equation*}
and $E^\w$ is called  {\it non-autonomous affine set with random translations}.

Let $\mathbb{E}$ denote expectation. Given $ \Lambda \subset \Sigma^*$, we write $\mathcal{F} =\sigma\{\w_\bu : \bu\in \Lambda\}$ for the sigma-field generated by the random vectors $\w_\bu$ for $\bu\in \Lambda$ and write $\mathbb{E}(X | \mathcal{F})$ for the expectation of a random variable $X$ conditional on $\mathcal{F}$; intuitively this is the expectation of $X$ given all $ \{\w_\bu : \bu\in\Lambda\}$.

With respect to the probability setting, we have a similar conclusion to Lemma~\ref{lemHD} to  estimate the potential integral. Since the proof is almost identical, we omit it.
\begin{lem}\label{thmHD}
Let $s$ satisfy $0 < s < s_A$ with $s$ non-integral. Suppose that there exists $c>0$ such that
$$
\mathbb{E}\Big(|\Pi^\w(\bu)-\Pi^\w(\bv)|^{-s}\  \Big| \ \mathcal{F}\Big) \leq  \frac{c}{\phi^{s}(T_{\bu \wedge \bv})},
$$
for all $\bu,\bv\in \Sigma^\infty$, $\bu \neq \bv$, where $\mathcal{F} =\sigma\{\w_\bu : \bu\in \Lambda\}$ for any subset $\Lambda $ of $\Sigma^*$ such that $\bv|k+1, \bv|k+2,\ldots\in \Lambda$ and $\bu|k+2, \bu|k+3,\ldots \in \Lambda $ but $\bu|k+1\notin \Lambda$, where $|\bu\wedge \bv|=k$.
Then
$$
\dimh E^\w \geq s_A,
$$
for almost all $\w$.
\end{lem}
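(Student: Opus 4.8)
The plan is to follow the proof of Lemma~\ref{lemHD} almost verbatim, replacing the finite-dimensional integral $\int_{\overline{\mathbf{B}(0,\rho)}}(\cdot)\,d\ba$ by the expectation $\mathbb{E}(\cdot)$ with respect to $\mathbf{P}$. Fix a non-integral $s<s_A$ and choose $t$ with $s<t<s_A$; then $\mathcal{M}^t(\Sigma^\infty)>0$, so Lemma~\ref{measure_ineq} supplies a Borel measure $\mu$ on $\Sigma^\infty$ with $0<\mu(\Sigma^\infty)<\infty$ and $\mu(\mathcal{C}_\bu)\leq c_1\phi^t(T_\bu)$ for all $\bu\in\Sigma^*$. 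Let $\mu^\w=\mu\circ(\Pi^\w)^{-1}$ be the projected measure, a Borel measure on $E^\w$ with $\mu^\w(E^\w)=\mu(\Sigma^\infty)>0$. The goal is to show that the $s$-energy
$$
I_s(\mu^\w)=\int_{\Sigma^\infty}\int_{\Sigma^\infty}\frac{d\mu(\bu)\,d\mu(\bv)}{|\Pi^\w(\bu)-\Pi^\w(\bv)|^{s}}
$$
is finite for $\mathbf{P}$-almost all $\w$, for then $\mu^\w$ is a measure of positive finite mass on $E^\w$ with finite $s$-energy, and the potential-theoretic criterion \cite[Theorem 4.13]{Bk_KJF2} forces $\dimh E^\w\geq s$.

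The first step is to remove the conditioning in the hypothesis. For distinct $\bu,\bv\in\Sigma^\infty$ with $k=|\bu\wedge\bv|$, the set $\Lambda=\{\bv|j:j>k\}\cup\{\bu|j:j>k+1\}$ satisfies the requirements in the statement, so with $\mathcal{F}=\sigma\{\w_\bu:\bu\in\Lambda\}$ the hypothesis gives $\mathbb{E}(|\Pi^\w(\bu)-\Pi^\w(\bv)|^{-s}\mid\mathcal{F})\leq c\,\phi^s(T_{\bu\wedge\bv})^{-1}$. Since $\phi^s(T_{\bu\wedge\bv})$ depends only on the linear parts $\{\Xi_k\}$ and is thus a deterministic constant, the tower property yields $\mathbb{E}(|\Pi^\w(\bu)-\Pi^\w(\bv)|^{-s})\leq c\,\phi^s(T_{\bu\wedge\bv})^{-1}$. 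Because the integrand is nonnegative, Tonelli's theorem permits interchanging $\mathbb{E}$ with the double integral over $\Sigma^\infty$, so
$$
\mathbb{E}\big(I_s(\mu^\w)\big)=\int_{\Sigma^\infty}\int_{\Sigma^\infty}\mathbb{E}\big(|\Pi^\w(\bu)-\Pi^\w(\bv)|^{-s}\big)\,d\mu(\bu)\,d\mu(\bv)\leq c\int_{\Sigma^\infty}\int_{\Sigma^\infty}\frac{d\mu(\bu)\,d\mu(\bv)}{\phi^s(T_{\bu\wedge\bv})}.
$$

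From here the estimate is identical to the one in Lemma~\ref{lemHD}: decomposing the double integral according to $\mathbf{p}=\bu\wedge\bv$, then using submultiplicativity of $\phi^s$, the bound $\mu(\mathcal{C}_\bu)\leq c_1\phi^t(T_\bu)$, \eqref{alhpha+-} and \eqref{aspalpha}, one obtains
$$
\int_{\Sigma^\infty}\int_{\Sigma^\infty}\frac{d\mu(\bu)\,d\mu(\bv)}{\phi^s(T_{\bu\wedge\bv})}\leq\sum_{\mathbf{p}\in\Sigma^*}\frac{\mu(\mathcal{C}_{\mathbf{p}})^2}{\phi^s(T_{\mathbf{p}})}\leq c_1\,\mu(\Sigma^\infty)\sum_{k=0}^{\infty}\alpha_+^{k(t-s)}<\infty.
$$
Hence $\mathbb{E}(I_s(\mu^\w))<\infty$, so $I_s(\mu^\w)<\infty$ for $\mathbf{P}$-almost all $\w$, giving $\dimh E^\w\geq s$ almost surely. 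Taking a sequence of non-integral values $s_n\nearrow s_A$ and intersecting the corresponding $\mathbf{P}$-conull sets completes the proof.

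I expect no real obstacle here; indeed this is why the paper says the proof is almost identical to that of Lemma~\ref{lemHD}. The only genuinely new ingredient is passing from the conditional bound to the unconditional bound by the tower property, which is immediate precisely because $\phi^s(T_{\bu\wedge\bv})$ carries no dependence on the translations, so the deterministic energy sum that does the real work is unchanged. The one technical remark worth making explicit is the joint measurability of $(\w,\bu,\bv)\mapsto|\Pi^\w(\bu)-\Pi^\w(\bv)|^{-s}$, needed for the Tonelli step; this follows from the continuity of $\Pi^\w(\bu)$ in both arguments.
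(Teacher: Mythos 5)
Your proposal is correct and follows exactly the route the paper intends: the paper omits the proof of this lemma, stating it is ``almost identical'' to that of Lemma~\ref{lemHD}, and your argument is precisely that adaptation, with the integral over $\ba$ replaced by the expectation and the tower property used to pass from the conditional bound to the unconditional one. The energy estimate via the cylinder decomposition, the bound $\mu(\mathcal{C}_{\mathbf{p}})\leq c_1\phi^t(T_{\mathbf{p}})$, and the final potential-theoretic step all match the paper's template.
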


Finally, we prove that for  almost all $\w$, the critical value $s_A$ gives the Hausdorff dimension of the non-autonomous affine set $E^w$.
\begin{proof}[Proof of Theorem ~\ref{asams}]
By Theorem ~\ref{dimHU}, the critical value $s_A$ is the upper bound to the Hausdorff dimension of $E^w$, that is,  $\dimh E^\w\leq s_A$. We only need to show that $s_A$ is also the lower bound.

Fix $\bu,\bv\in \Sigma^*$. Let $n=|\bu \wedge \bv|$. Then
\begin{eqnarray*}
\Big|\Pi^\w(\bu)-\Pi^\w(\bv)\Big|&=& T_{\bu\wedge\bv}\Big(\w_{\bu|{n+1}}+ T_{n+1,u_{n+1}}\w_{\bu|{n+2}} +T_{n+1,u_{n+1}}T_{n+2,u_{n+2}} \w_{\bu|{n+3}} +\ldots\\
&&-(\w_{\bv|{n+1}}+T_{n+1,v_{n+1}}\w_{\bv|{n+2}}
+T_{n+1,v_{n+1}}T_{n+2,v_{n+2}}\w_{\bv|{n+3}} +\ldots)\Big)\\
&=&\Big|T_{\bu\wedge\bv}\Big(w_{\bu|{n+1}}+ q_n(\bu,\bv,\w)\Big)\Big|,
\end{eqnarray*}
where $q_n(\bu,\bv,\w)$ is a random vector which is independent of $\w_{\bu|{n+1}}$. Since the measure $P$ is absolutely continuous with respect to $\mathcal{L}^d$ with bounded density, we have that
\begin{eqnarray*}
&&\mathbb{E}\Big(|\Pi^\w(\bu)-\Pi^\w(\bv)|^{-s} \ \Big| \ \mathcal{F}\Big)=\int_{\D} \frac{dP(\w_{\bu|n+1})}{|\Pi^\w(\bu)-\Pi^\w(\bv)|^{s}}\\
&&=\int_0^\infty s \rho^{-s-1}P\Big\{\w_{\bu|n+1} \in \mathcal{D}: \big|\Pi^\w(\bu)-\Pi^\w(\bv)\big|< \rho \Big\} d\rho \\
&&=s \int_0^\infty \rho^{-s-1} P\Big\{\w_{\bu|n+1}\in  \mathcal{D} : \Big|T_{\bu\wedge\bv}\big(\w_{\bu|n+1}+ q_n(\bu,\bv,\w)\big)\Big|<\rho \Big\}d\rho\\
&&\leq C \int_0^\infty \rho^{-s-1} \mathcal{L}^d\Big\{\w_{\bu|n+1}\in  \mathcal{D} :\Big|T_{\bu\wedge\bv}\big(\w_{\bu|n+1}+ q_n(\bu,\bv,\w)\big)\Big|<\rho\Big\}d\rho.
\end{eqnarray*}

For simplicity, we write $\alpha_1,\ldots,\alpha_d$ for the singular values of $T_{\bu\wedge\bv}$. Let $m$ be the integer such
that $s<m\leq s+1$. Since $D$ is a bounded region in $\R^d$, there exists $\rho_0>0$ such that $D\subset B(0, \rho_0)$. It is clear that
$$
\{\w_{\bu|n+1}\in \mathcal{D} :|T_{\bu\wedge\bv}\big(\w_{\bu|n+1}+ q_n(\bu,\bv,\w)\big)|<\rho\}\subset
B(0,\rho_0)\cap \Big(T_{\bu\wedge\bv}^{-1}\big(B(0,\rho) \big) -  q_n(\bu,\bv,\w)\Big).
$$
By moving $B(0,\rho_0)$ to $ B(T_{\bu\wedge\bv}^{-1}(0),\rho_0) $, we have that
$$
\mathcal{L}^d\Big\{B(0,\rho_0)\cap \Big(T_{\bu\wedge\bv}^{-1}\big(B(0,\rho) \big) -  q_n(\bu,\bv,\w)\Big) \Big\}\leq
\mathcal{L}^d\big\{T_{\bu\wedge\bv}^{-1}\big(B(0,\rho)\big)\cap B\big(T_{\bu\wedge\bv}^{-1}(0),\rho_0\big)  \Big\}.
$$

Combining with these two facts,  we have that
\begin{eqnarray*}
&&\int_0^\infty \rho^{-s-1} \mathcal{L}^d\Big\{\w_{u_1\ldots u_{n+1}}\in  \mathcal{D} :\Big|T_{\bu\wedge\bv}\big(w_{u_1\ldots u_{n+1}}+ q_n(\bu,\bv,\w)\big)\Big|<\rho\Big\}d\rho \\
&\leq& \int_0^\infty \rho^{-s-1} \mathcal{L}^d\Big\{T_{\bu\wedge\bv}^{-1}\big(B(0,\rho)\big)\cap B\big(T_{\bu\wedge\bv}^{-1}(0),\rho_0\big)  \Big\}d\rho\\
&\leq& \int_0^{\alpha_m} \rho^{-s-1} \mathcal{L}^d \Big\{|x_1|\leq
\frac{\rho}{\alpha_1},\ldots,|x_m|\leq \frac{\rho}{\alpha_m},|x_{m+1}|\leq
\rho_0,\ldots,|x_n|\leq \rho_0
\Big\}dt \\
&& +\int_{\alpha_m}^\infty \rho^{-s-1} \mathcal{L}^d \Big\{|x_1|\leq
\frac{\rho}{\alpha_1},\ldots,|x_{m-1}|\leq
\frac{\rho}{\alpha_{m-1}},|x_m|\leq \rho_0,\ldots,|x_n|\leq \rho_0\Big\}dt  \\
&\leq& \int_0^{\alpha_m}
\frac{\rho_0^{n-m}\rho^{m-s-1}}{\alpha_1\ldots\alpha_m}d\rho
+\int_{\alpha_m}^\infty
\frac{\rho_0^{n-m+1}\rho^{m-s-2}}{\alpha_1\ldots\alpha_{m-1}}d\rho\\
&=&c_1\frac{1}{\alpha_1\ldots\alpha_{m-1}\alpha_m^{s-m+1}}+c_2\frac{1}{\alpha_1\ldots\alpha_{m-1}\alpha_m^{s-m+1}}\\
&=&\frac{c}{\phi^{s}(T_{\bu \wedge \bv})},
\end{eqnarray*}
where $c_1, c_2$ and $c$ are  constants independent of $\bu, \bv$ and $\w$.

Therefore, for all $0 < s\leq  d$ with $s$ non-integral,
we obtain that
$$
\mathbb{E}\Big(|\Pi^\w(\bu)-\Pi^\w(\bv)|^{-s}\ \Big| \ \mathcal{F}\Big) \leq  \frac{c}{\phi^{s}(T_{\bu \wedge \bv})},
$$
for all $\bu,\bv\in \Sigma^*$, $\bu \neq \bv$.
By Lemma ~\ref{thmHD},  we have that $\dimh\geq s_A$, and  the conclusion holds.

We next prove part (2). Choose $s$ such that $d<s<s_A$. By the definition of $s_A$, we have that $\mathcal{M}^s(\Sigma^\infty)=\infty$. By Lemma ~\ref{measure_ineq}, there exists a
Borel measure $\mu$ on $\Sigma^\infty$ such that
$0<\mu(\Sigma^\infty)<\infty$ and a constant $c_1>0$ such that
\begin{equation}
\mu(\mathcal{C}_\bu) \leq c_1 \phi^s(T_\bu),             \label{frost}
\end{equation}
for all $\bu \in \Sigma^*.  $
Let $\mu^\w$ be the projection measure of $\mu$ given by \eqref{def_ua}. It is clear that
$$\mu^\w(E^\w)=\mu(\Sigma^\infty)>0.$$

By similar argument as above, for all distinct $\bu,\bv\in \Sigma^\infty$ and $\rho>0$, there exist a constant $c$ such that
\begin{eqnarray*}
&&\mathbf{P}\{\w \in \Omega: |\Pi^\w(\bu)-\Pi^\w(\bv)|< \rho\}   \nonumber\\
&\leq& c \mathcal{L}^d \{T_{\bu\wedge\bv}^{-1}(B(0,\rho))\}    \nonumber\\
&\leq& c \mathcal{L}^d \Big\{(x_1,\ldots,x_d)\in\R^d: |x_1|\leq \frac{\rho}{\alpha_1},\ldots,|x_d|\leq \frac{\rho}{\alpha_d}\Big\}      \nonumber\\
&=& c\cdot \frac{\rho^d}{\phi^d(T_{\bu\wedge\bv})}\label{ineq_pw}.
\end{eqnarray*}
Furthermore, by \eqref{frost} and ~\eqref{alhpha+-}, we have that for $\bu\in\Sigma^k$,
\begin{equation*}\label{ineq_phid}
\frac{\mu(\mathcal{C}_\bu)}{\phi^d(T_\bu)}\leq c_1 \alpha_+^{(s-d)k}.
\end{equation*}

Similar to the proof of Theorem~\ref{dimHL}, we have that
\begin{eqnarray*}
&&\iint \liminf_{r\to 0} \frac{\mu^\w(B(x,r))}{ r^d}d\mu^\w(x) d\mathbf{P}(\w)  \\
&\leq& \liminf_{r\to 0} \frac{1}{ r^d} \iint \mathbf{P}\{\w \in \Omega:  |\Pi^\w(\bu)-\Pi^\w(\bv)|< r \} d\mu(\bu)d\mu(\bv)\\
&\leq& C \iint \frac{1}{\phi^d(T_{\bu\wedge\bv})}d\mu(\bu)d\mu(\bv)\\
&\leq& C \mu(\Sigma^\infty) \sum_{k=0}^\infty \alpha_+^{(s-d)k} \\
&<&\infty,
\end{eqnarray*}
where $C$ is a constant.

Hence for $\mathbf{P}$-almost all $\w$, we obtain that
$$\liminf_{r\to 0} \frac{\mu^\w(B(x,r))}{ r^d}<\infty,$$
for $\mu^\w$-almost all $x$.  Since $\mu^\w(E^\w)>0$, by Theorem~\ref{abcon},  it follows that $\mathcal{L}^d(E^\w)>0$ for $\mu^\w$-almost all $x$, and the conclusion holds.
\end{proof}

\section{Comparison of critical values and some examples}\label{sec_CV}
In this section, we discuss the relations of the critical values $s^*$, $s_A$ and the affine dimension  $d(T_1,\ldots, T_M)$.  The first conclusion follows straightforward from their definitions.

\begin{prop}\label{prop_CV}
Let $s^*$ and $s_A$ be given by \eqref{defs*} and \eqref{defsA}, respectively. Then
$$s_A\leq s^*.$$
\end{prop}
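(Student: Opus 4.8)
The plan is to deduce the inequality from the observation that $\mathcal{M}^s(\Sigma^\infty)=0$ for every $s>s^*$; since $s_A=\inf\{s:\mathcal{M}^s(\Sigma^\infty)=0\}$ by \eqref{defsA}, this immediately gives $s_A\le s^*$ (the case $s^*=\infty$ being vacuous). So fix $s>s^*$. Because $s^*$ is the infimum of the set appearing in \eqref{defs*}, there is some $s'<s$ with $\limsup_{\epsilon\to 0}\sum_{\bu\in\Sigma^*(s',\epsilon)}\phi^{s'}(T_\bu)<\infty$; in particular there are $\epsilon_0>0$ and $M<\infty$ such that $\sum_{\bu\in\Sigma^*(s',\epsilon)}\phi^{s'}(T_\bu)\le M$ for all $0<\epsilon<\epsilon_0$. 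We may assume $s,s'\le d$, the case $s^*\ge d$ being identical with the convention $\phi^t(T)=(\det T)^{t/d}$.

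First I would compare the singular value functions on the stopping set $\Sigma^*(s',\epsilon)$. Writing $m'$ for the integer with $m'-1<s'\le m'$, every $\bu\in\Sigma^*(s',\epsilon)$ satisfies $\alpha_{m'}(T_\bu)\le\epsilon<1$, and since all singular values of $T_\bu$ with index $\ge m'$ are at most $\alpha_{m'}(T_\bu)$, writing $\phi^s(T_\bu)/\phi^{s'}(T_\bu)$ as a monomial in these singular values and checking that the exponents sum to $s-s'$ yields
$$
\phi^s(T_\bu)\le\epsilon^{\,s-s'}\phi^{s'}(T_\bu)\qquad(\bu\in\Sigma^*(s',\epsilon)).
$$
Next I would note that $\Sigma^*(s',\epsilon)$ is an admissible cover for the net pre-measure $\mathcal{M}_{(k)}^s$ once $\epsilon$ is small enough: it is a cut-set, so $\Sigma^\infty\subset\bigcup_{\bu\in\Sigma^*(s',\epsilon)}\mathcal{C}_\bu$, and each of its words has $\alpha_-^{|\bu|}\le\alpha_d(T_\bu)\le\alpha_{m'}(T_\bu)\le\epsilon$, whence $|\bu|\ge\log\epsilon/\log\alpha_-\to\infty$ as $\epsilon\to0$ by \eqref{aspalpha}. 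Combining the two, for any fixed $k$ and all $\epsilon$ small enough,
$$
\mathcal{M}_{(k)}^s(\Sigma^\infty)\le\sum_{\bu\in\Sigma^*(s',\epsilon)}\phi^s(T_\bu)\le\epsilon^{\,s-s'}\sum_{\bu\in\Sigma^*(s',\epsilon)}\phi^{s'}(T_\bu)\le M\epsilon^{\,s-s'},
$$
and letting $\epsilon\to0$ gives $\mathcal{M}_{(k)}^s(\Sigma^\infty)=0$. As $k$ is arbitrary and $\mathcal{M}_{(k)}^s(\Sigma^\infty)$ increases to $\mathcal{M}^s(\Sigma^\infty)$ by \eqref{measure}, we obtain $\mathcal{M}^s(\Sigma^\infty)=0$, which is what was needed.

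I do not expect a genuine difficulty here; the statement is essentially a bookkeeping comparison of the two families of sums defining $s^*$ and $s_A$. The only place calling for a little attention is the inequality $\phi^s(T_\bu)\le\epsilon^{\,s-s'}\phi^{s'}(T_\bu)$ when $s$ and $s'$ fall in different integer blocks (so that $\lceil s\rceil\neq\lceil s'\rceil$ and one must track the extra middle singular values), together with the use of the standing assumption $\alpha_-<1$ to guarantee that the words in $\Sigma^*(s',\epsilon)$ become arbitrarily long as $\epsilon\to0$, which is exactly what makes these stopping sets legitimate covers in the definition of $\mathcal{M}^s$.
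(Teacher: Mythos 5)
Your argument is correct and is precisely the ``follows straightforward from the definitions'' reasoning that the paper invokes without writing out: the cut-sets $\Sigma^*(s',\epsilon)$ are admissible covers for $\mathcal{M}_{(k)}^s$ once $\epsilon$ is small (their words have length at least $\log\epsilon/\log\alpha_-\to\infty$), and the comparison $\phi^s(T_\bu)\le\epsilon^{\,s-s'}\phi^{s'}(T_\bu)$ on these cut-sets forces $\mathcal{M}^s(\Sigma^\infty)=0$ for every $s>s^*$, hence $s_A\le s^*$. The paper supplies no further detail, so there is nothing to compare; the only spot worth one extra sentence in your write-up is the reduction when $s^*\ge d$ (where $\phi^s(T)=(\det T)^{s/d}$ and the stopping sets must be read with $m=d$), which does go through as you indicate.
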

Note that the inequality may hold strictly, see Example~\ref{exCVie}. Moreover, $d(T_1,\ldots, T_M)$ generally does not exist in non-autonomous affine IFS.

In the following special cases, the two critical values $s_A$ and $s^*$ may coincide with  $d(T_1,\ldots, T_M)$.  Remind  that the non-autonomous affine set $E$ may not be self-affine fractals even if the sequence $\{\Xi_k\}$ is identical,  see Remark (4) after the definition of non-autonomous affine sets.

\begin{thm}\label{sequal}
Suppose that  $\Xi_k =\{T_1,T_2,\ldots,T_M\}$ and  $n_k=M$ for all $k>0$. Then
$$
s_A=s^*=d(T_1,\ldots,T_M).
$$
\end{thm}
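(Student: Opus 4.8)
The plan is to reduce Theorem~\ref{sequal} to two routine upper bounds together with one substantial lower bound. Write $s_0=d(T_1,\ldots,T_M)$, $a_k(s)=\sum_{\bu\in\Sigma^k}\phi^s(T_\bu)$ and $p(s)=\lim_k a_k(s)^{1/k}$. Since $\Xi_k=\{T_1,\ldots,T_M\}$ and $n_k=M$ for every $k$, we have $T_\bu=T_{u_1}\cdots T_{u_k}$ for $\bu=u_1\cdots u_k$, so \eqref{subphi} shows that $\{a_k(s)\}_k$ is submultiplicative; as noted just after \eqref{subphi}, $p$ is then well defined, continuous and strictly decreasing, and $p(s_0)=1$ is precisely the defining equation of the affine dimension. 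Hence $p(s)<1$ — equivalently $\sum_k a_k(s)<\infty$ — whenever $s>s_0$, while $a_k(s)\ge p(s)^k\ge 1$ and $\sum_k a_k(s)=\infty$ for $s\le s_0$.

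\emph{Upper bounds.} Fix $s>s_0$. For every $k$ and $\epsilon$ one has $\Sigma^*(s,\epsilon)\cap\Sigma^k\subset\Sigma^k$, so $\sum_{\bu\in\Sigma^*(s,\epsilon)}\phi^s(T_\bu)\le\sum_k a_k(s)<\infty$ uniformly in $\epsilon$; thus $s$ satisfies the condition in \eqref{defs*}, and letting $s\downarrow s_0$ gives $s^*\le s_0$. Taking $A=\Sigma^k$ in the definition of $\mathcal{M}^s_{(k)}$ gives $\mathcal{M}^s_{(k)}(\Sigma^\infty)\le a_k(s)\to 0$, so $\mathcal{M}^s(\Sigma^\infty)=0$ and $s_A\le s_0$ (this is not even needed, as Proposition~\ref{prop_CV} already gives $s_A\le s^*\le s_0$). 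It therefore remains to prove $s_A\ge s_0$.

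\emph{Lower bound.} First suppose $s_0>d$. For non-integral $d<s<s_0$ the convention $\phi^s(T)=(\det T)^{s/d}$ makes $\bu\mapsto\phi^s(T_\bu)$ multiplicative, so splitting $\Sigma^N$ over a finite cut-set $A$ with $\min_{\bu\in A}|\bu|\ge k$, where $N=\max_{\bu\in A}|\bu|$, yields $\sum_{\bu\in A}\phi^s(T_\bu)\,p(s)^{-|\bu|}=1$; since $p(s)>1$ this forces $\sum_{\bu\in A}\phi^s(T_\bu)\ge p(s)^k$, whence $\mathcal{M}^s_{(k)}(\Sigma^\infty)=p(s)^k\to\infty$ and $s_A\ge s$; letting $s\uparrow s_0$ gives $s_A\ge s_0$. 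The remaining range $s<\min\{d,s_0\}$ I would handle by comparison with a genuine self-affine set: choose $p$ with $\alpha_+^p<\tfrac12$ and pass to the homogeneous system with alphabet $\Sigma^p$ and matrix set $\{T_\bu:\bu\in\Sigma^p\}$; its affine dimension is again $s_0$ (its pressure is $p(\cdot)^p$), and since refining any covering set to block lengths multiplies $\sum\phi^s(T_\bu)$ by at most $M^p$, its net measure $\mathcal{M}^s$ on $\Sigma^\infty$ is comparable to the original one, so it has the same critical value $s_A$. For a generic choice of translations this grouped system generates a self-affine set $F$ with $\|T_\bu\|<\tfrac12$, so the classical formula cited in the introduction gives $\dimh F=\min\{d,s_0\}$, while Theorem~\ref{dimHU} applied to $F$ (itself a self-affine Moran set) gives $\dimh F\le\min\{s_A,d\}$. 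Hence $s_A\ge\min\{d,s_0\}$, which together with the multiplicative case yields $s_A\ge s_0$ in all cases; combined with $s_A\le s^*\le s_0$ this proves $s_A=s^*=s_0$.

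\emph{Main obstacle.} The only place requiring real work is the lower bound $\mathcal{M}^s(\Sigma^\infty)>0$ in the genuinely affine range $s<\min\{d,s_0\}$: there $\phi^s$ is merely submultiplicative, the naive normalized level measures $\mu_k(\mathcal{C}_\bu)=\phi^s(T_\bu)/a_k(s)$ need not satisfy $\mu_k(\mathcal{C}_\bu)\le C\phi^s(T_\bu)$ with $C$ independent of $\bu$, so the mass distribution principle does not apply directly. The grouping argument sidesteps this by invoking Falconer's and Solomyak's theorem for self-affine sets; a self-contained alternative would instead reconstruct, in the symbolic setting, Falconer's Frostman-type measure on $\Sigma^\infty$ for $s<s_0$, which is exactly what makes Lemma~\ref{measure_ineq} applicable. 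Everything else is bookkeeping with submultiplicativity.
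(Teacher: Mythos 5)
Your proposal is correct, but the essential lower bound $s_A\ge d(T_1,\ldots,T_M)$ is obtained by a genuinely different route from the paper's. The paper argues by contraposition entirely inside the symbolic space: if $s>s_A$ then $\mathcal{M}^s(\Sigma^\infty)=0$, so some finite covering set $\Lambda$ satisfies $\sum_{\bu\in\Lambda}\phi^s(T_\bu)\le 1$; concatenating copies of $\Lambda$ and padding up to exact levels (using submultiplicativity, $\phi^s(T_\bv)\le 1$, and $n_k\equiv M$) gives $\sum_{\bu\in\Sigma^{k+p}}\phi^s(T_\bu)\le M^p$ for every $k$, whence $p(s)\le 1$ and $s\ge d(T_1,\ldots,T_M)$. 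This is Falconer's covering-iteration argument transplanted to $\Sigma^\infty$; it is self-contained, needs no case split at $s=d$, and makes visible that $s_A\ge d(T_1,\ldots,T_M)$ is a purely combinatorial statement about the net measure, independent of any geometric realization. Your argument instead splits at $s=d$: for $s>d$ you exploit exact multiplicativity of $\phi^s$ to build the consistent premeasure $\nu(\mathcal{C}_\bu)=\phi^s(T_\bu)p(s)^{-|\bu|}$ and deduce $\mathcal{M}^s_{(k)}(\Sigma^\infty)\ge p(s)^k$ (note ``$\ge$'' rather than ``$=$'': covering sets need not be cut-sets, but countable subadditivity of $\nu$ still yields the bound); for $s<\min\{d,s_0\}$ you pass to $p$-blocks, check that blocking changes $\mathcal{M}^s$ only by a factor $M^p$, realize the blocked system geometrically with generic translations, and combine the cited Falconer--Solomyak formula \eqref{sadimf} with Theorem \ref{dimHU}. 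I have checked these steps and they all go through, so the proof is valid; but the second half imports a much heavier theorem --- whose own proof contains precisely the symbolic Frostman-type lemma you are trying to establish --- and detours through the Hausdorff dimension of a geometric attractor to prove a statement about $\mathcal{M}^s(\Sigma^\infty)$. A leaner version of your plan would cite directly from Falconer's 1988 paper the fact that $\mathcal{M}^s(\Sigma^\infty)>0$ for $s$ below the affinity dimension (the input to Lemma \ref{measure_ineq}), or simply run the paper's concatenation argument, which costs a few lines and covers both ranges of $s$ at once.
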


\begin{proof}
Since $n_k =M$, we have that $\Sigma^k=\{1,\ldots, M\}^k$ for all $k>0$. This implies that
\begin{eqnarray*}
\sum_{\bu\in\Sigma^{k+l}}\phi^s(T_\bu)&=& \sum_{\bu\in\Sigma^k}\sum_{\bv\in\Sigma^l}\phi^s(T_{\bu\bv})\\
&\leq& \sum_{\bu\in\Sigma^k}\phi^s(T_{\bu}) \sum_{\bv\in\Sigma^l}\phi^s(T_{\bv}).
\end{eqnarray*}
Thus $\sum_{\bu\in\Sigma^k}\phi^s(T_\bu)$ is a submultiplicative sequence, so by the standard property of such sequences, $\lim_{k\to\infty}\big(\sum_{\bu\in\Sigma^k} \phi^s(T_\bu)\big)^\frac{1}{k}$ exists for each $s$. Since for each $\bu\in\Sigma^*$ and $0<h<1$,
$$
\phi^s(T_\bu) \alpha_-^h \leq \phi^{s+h}(T_\bu) \leq \phi^s(T_\bu) \alpha_+^h,
$$
we have that $\lim_{k\to\infty}\big(\sum_{\bu\in\Sigma^k} \phi^s(T_\bu)\big)^\frac{1}{k}$ is continuous and strictly decreasing in $s$. Since the limit is  greater
than 1 for $s = 0$ and less than 1 for sufficiently large $s$, there exists a unique $s$, written as $d(T_1,\ldots,T_M)=s$, such that
$$
\lim_{k\to\infty}\Big(\sum_{\bu\in\Sigma^k} \phi^{s}(T_\bu)\Big)^\frac{1}{k}=1.
$$

First, we show that $s^*\geq d(T_1,\ldots,T_M)$. For each $s>d(T_1,\ldots,T_M)$,
$$
\sum_{\bu\in\Sigma^*} \phi^s(T_\bu)= \sum_{k=1}^\infty \sum_{\bu\in\Sigma^k} \phi^s(T_\bu) <\infty.
$$
Hence
$$
\limsup_{\epsilon\to 0}\sum_{\bu\in\Sigma^*(s, \epsilon)}\phi^s(T_\bu)\leq \sum_{\bu\in\Sigma^*} \phi^s(T_\bu)<\infty.
$$
It follows that $s>s^*$. Then we have $s^*\leq d(T_1,\ldots,T_M)$.

Next, we show $d(T_1,\ldots,T_M)\leq s_A$. Let $m$ be the integer that $m-1\leq s_A<m$.
Arbitrarily choosing $s_A<s<m$, then $\mathcal{M}^s(\Sigma^\infty)=0$.
Thus there exists a covering set $\Lambda$ of $\Sigma^\infty$ such that
$$
\sum_{\bu\in \Lambda}\phi^s(T_\bu)\leq 1.
$$
Let $p=\max\{|\bu|:\bu\in \Lambda\}$. For $\epsilon<\alpha_-^p$, we define further covering sets $\Lambda_k$ by
$$
\Lambda_k=\{\bu_1,\bu_2,\ldots,\bu_q:\bu_i\in \Lambda, |\bu_1,\bu_2,\ldots,\bu_q|\geq k \text{ but } |\bu_1,\bu_2,\ldots,\bu_{q-1}| < k\}.
$$
Then by the submultiplicativity of $\phi^s$,
\begin{eqnarray*}
\sum_{\bu\in \Lambda}\phi^s(T_{\bu_1}\ldots T_{\bu_q}T_\bu) &\leq&  \phi^s(T_{\bu_1}\ldots T_{\bu_q}) \sum_{\bu\in \Lambda} \phi^s(T_\bu)\\
&\leq& \phi^s(T_{\bu_1}\ldots T_{\bu_q}).
\end{eqnarray*}
Applying this inductively, we obtain that
$$
\sum_{\bu\in \Lambda_k}\phi^s(T_\bu)\leq 1.
$$
If $\bu\in \Sigma^{k+p}$, then $\bu=\bu'\bv$, where $\bu'\in \Lambda_k$ and $|\bv|\leq p$. Moreover, for such each $\bu'$, there are at most $M^p$ such $\bv$. Since $\phi^s(T_\bu)\leq \phi^s(T_{\bu'})$,
$$
\sum_{\bu\in \Sigma^{k+p}} \phi^s(T_\bu) \leq M^p \sum_{\bu\in \Lambda_k}\phi^s(T_\bu) \leq M^p.
$$
Since it holds for all $k$,  we have that
$$\lim_{k\to \infty} \Big(\sum_{\bu\in \Sigma^k} \phi^s(T_\bu)\Big)^\frac{1}{k}\leq 1.
$$
Hence $s\geq d(T_1,\ldots,T_M)$, and it implies that  $d(T_1,\ldots,T_M)\leq s_A$. By Proposition~\ref{prop_CV},  we obtain that
$$
s_A=s^*=d(T_1,\ldots,T_M).
$$
\end{proof}
The Hausdorff dimension of self-affine sets immediately follows from Theorem~\ref{dimHL} and Theorem~\ref{sequal}, see\cite{Falco88, Solom98}
\begin{cor}
Let $E^\ba$ be the self-affine set given by ~\eqref{saattractor}, where $\ba=(a_1,\ldots,a_M)$.  Suppose that $\|T_j\|< \frac{1}{2}$ for all $0<j\leq M$.
Then for $\mathcal{L}^{M d}$-almost all $\ba\in \R^{M d}$,

$(1)$ $\dimh E^\ba = s_A$ if $s_A \leq d$,

$(2)$ $\mathcal{L}^d(E^\ba)>0$ if $s_A>d$.
\end{cor}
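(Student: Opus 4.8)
The plan is to realise the self-affine set $E^\ba$ of~\eqref{saattractor} as a particular self-affine Moran set with finitely many translations, and then to invoke Theorem~\ref{sequal} and Theorem~\ref{dimHL} essentially as black boxes. Fix a radius $\rho>0$ and put $J=\overline{B(0,R)}$ with $R$ large enough that $\Psi_i(J)=T_i(J)+a_i\subset J$ for every $1\le i\le M$ whenever $\|a_i\|\le\rho$; since $\eta:=\max_{1\le i\le M}\|T_i\|<\tfrac12<1$, it suffices to take $R\ge\rho/(1-\eta)$. Set $n_k=M$ and $\Xi_k=\{T_1,\dots,T_M\}$ for all $k$, let $\Gamma=\{a_1,\dots,a_\tau\}$ with $\tau=M$, and to each word $\bu=u_1\cdots u_k\in\Sigma^k=\{1,\dots,M\}^k$ assign the translations $\w_{u_1\cdots u_j}=a_{u_j}\in\Gamma$, $1\le j\le k$. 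Then $\Psi_{u_j}(x)=T_{j,u_j}x+\w_{u_1\cdots u_j}=T_{u_j}x+a_{u_j}$ coincides with the original IFS map, so $J_\bu=\Psi_{u_1}\circ\cdots\circ\Psi_{u_k}(J)$, and the three self-affine Moran structure conditions hold: (1) from $\Psi_i(J)\subset J$; (2) trivially; (3) since $\max_{\bu\in\Sigma^k}|J_\bu|\le\eta^k|J|\to0$. As $\Sigma^k=\mathcal{I}^k$, the resulting self-affine Moran set $\bigcap_k\bigcup_{\bu\in\Sigma^k}J_\bu=\Pi^\ba(\Sigma^\infty)$ is precisely the self-affine attractor $E^\ba$, which does not depend on the auxiliary ball $J$ anyway.

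Because $\Xi_k=\{T_1,\dots,T_M\}$ and $n_k=M$ for all $k>0$, Theorem~\ref{sequal} applies and gives
$$s_A=s^*=d(T_1,\dots,T_M).$$
In particular $s_A$ is finite and well defined, depending only on $\{\Xi_k\}$ and not on the translations, and the dichotomy $s_A\le d$ versus $s_A>d$ is exactly $d(T_1,\dots,T_M)\le d$ versus $>d$.

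Finally, since $\sup\{\|T_{k,j}\|:0<j\le n_k,\ k>0\}=\eta<\tfrac12$, Theorem~\ref{dimHL} applies with this $\tau=M$ and $\Gamma$, yielding for $\mathcal{L}^{Md}$-almost all $\ba=(a_1,\dots,a_M)\in\R^{Md}$ that $\dimh E^\ba=s_A$ when $s_A\le d$ and $\mathcal{L}^d(E^\ba)>0$ when $s_A>d$; one first obtains this for a.e.\ $\ba$ in each ball $\overline{\mathbf B(0,\rho)}$ using the ball $J=J(\rho)$ above, then takes a countable union over $\rho\in\N$, the value $s_A$ and the set $E^\ba$ being independent of this choice. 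This is the assertion of the corollary. The only point genuinely needing care is the dictionary of the first paragraph — that the intrinsically-defined self-affine attractor really is the self-affine Moran set for the assignment $\w_{u_1\cdots u_j}=a_{u_j}$, and that the proof of Theorem~\ref{dimHL}, which passes through Lemma~\ref{lemHD} and the potential estimate~\eqref{ineqPAP} established under $\eta<\tfrac12$, uses nothing about the translation-assignment rule beyond its taking values in the finite set $\Gamma$; both are routine bookkeeping, which is why the paper records this as an immediate consequence.
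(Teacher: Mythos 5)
Your proposal is correct and follows exactly the route the paper intends: the paper offers no written proof beyond the remark that the corollary ``immediately follows from Theorem~\ref{dimHL} and Theorem~\ref{sequal}'', and you have simply spelled out the routine dictionary (choice of $J$, the translation assignment $\w_{u_1\cdots u_j}=a_{u_j}$, verification of the Moran structure conditions, and the exhaustion over balls $\overline{\mathbf{B}(0,\rho)}$) that makes that specialization legitimate.
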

The Hausdorff dimension of self-affine sets with random translation immediately follows from Theorem~\ref{asams} and Theorem~\ref{sequal}, see\cite{JPS07}

\begin{cor}
Let $E^\w$ be the self-affine set with random translation. Then for $\mathbf{P}$-almost all $\omega$,

$(1)$ $\dimh E^\w = s_A$ if $s_A \leq d$,

$(2)$ $\mathcal{L}^d(E^\omega)>0$ if $s_A>d$.

\end{cor}

Finally, we give some examples to illustrate the definition of non-autonomous affine sets and our conclusions.
\begin{exmp}\label{exp1}
Suppose that $d=2$ and $J=[0,1]^2$. For all $k>0$, $n_k=2$,
$$
\Psi_{k,1}=\begin{pmatrix}
           \frac{1}{2} & 0 \\
           0 & \frac{2^{k+1}+1}{2^{k+1}+2}
         \end{pmatrix}x, \quad \Psi_{k,2}=\begin{pmatrix}
           \frac{1}{2} & 0 \\
           0 & \frac{2^{k+1}+1}{2^{k+1}+2}
         \end{pmatrix}x + \begin{pmatrix}
                            \frac{1}{2} \\
                            0
                          \end{pmatrix}.
$$
Then
$$
\lim_{k\to \infty} \max_{\bu\in \Sigma^k} |J_\bu| = \frac{2}{3},
$$
and $E$ has non-empty interior. Hence all dimensions of $E$ equals $2$.

\begin{figure}[h]\label{fig3}
\centering
\includegraphics[width=\textwidth]{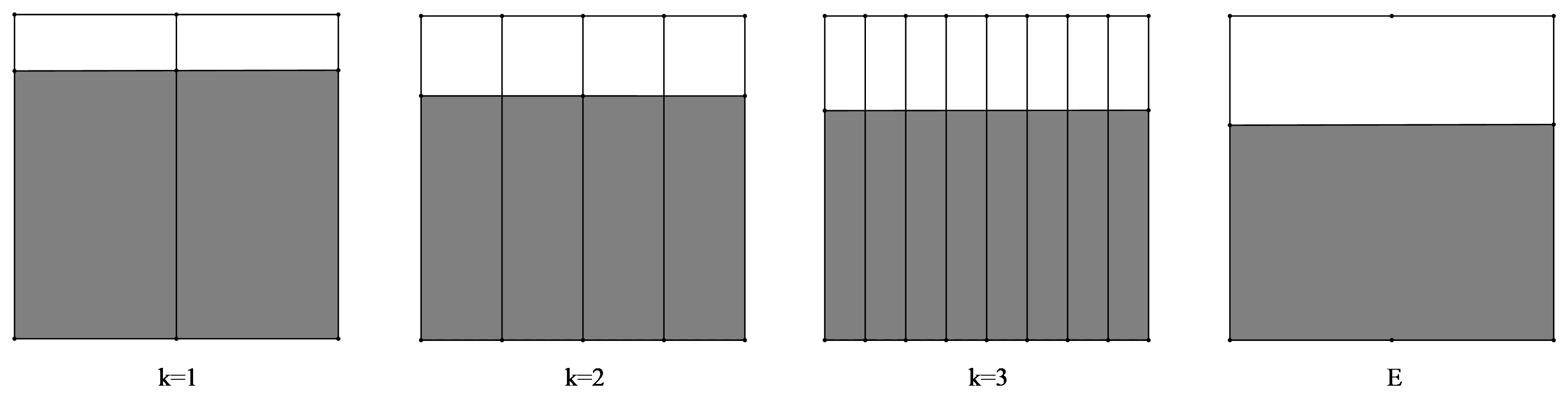}
\caption{non-autonomous affine set with positive finite Lebesgue measure}
\end{figure}
\end{exmp}

Note that Moran sets are always uncountable. In the following examples, we show that non-autonomous affine sets may be finite, countable or uncountable even if the Moran separation condition is satisfied.

\begin{exmp}\label{SMfnt}
Suppose that $d=2$ and $J=[0,1]^2$. For all $k>0$, $n_k=2$,
$$
\Psi_{k,1}=\begin{pmatrix}
           \frac{1}{2} & 0 \\
           0 & \frac{1}{2}
         \end{pmatrix}x, \quad
\Psi_{k,2}=\begin{pmatrix}
           0 & 0 \\
           0 & \frac{1}{2}
         \end{pmatrix}x+\begin{pmatrix}
         1 \\
         0
         \end{pmatrix}.
$$
Then
$$E=\{(0,0),(1,0), (\frac{1}{2},0), (\frac{1}{4},0), \ldots, (\frac{1}{2^n},0), \ldots\}.
$$
It is clear that $E$ is countable, see Figure~\ref{fig2}.

\begin{figure}[h]\label{fig2}
\centering
\includegraphics[width=\textwidth]{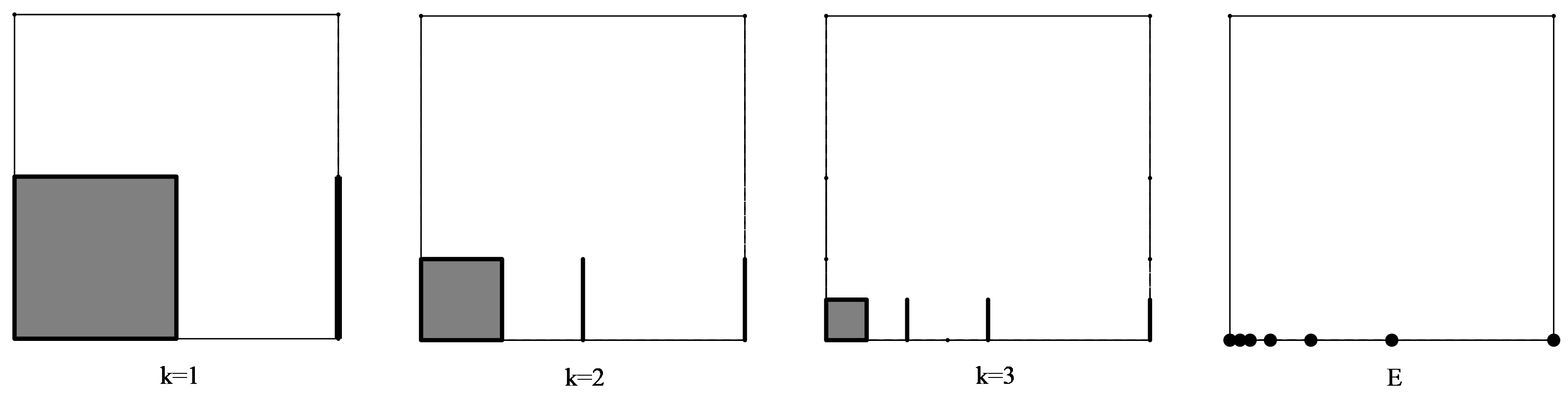}
\caption{non-autonomous affine set contains countably many elements}
\end{figure}
\end{exmp}

\begin{exmp}\label{SMcntb}
Given $J=[0,1]^2$ and an integer $n\geq 2$. Let  $n_1=n$  and  $\mathcal{D}\subset \{0,1,\ldots, n-1\}^2$. We write
$$
\Psi_{1,j}=\begin{pmatrix}
           \frac{1}{n} & 0 \\
           0 & \frac{1}{n}
         \end{pmatrix} (x+b_j),  \quad  b_j \in \mathcal {D}.
$$
For all $k\geq 2$, we set $n_k=2$ and
$$\Psi_{k,1}=\begin{pmatrix}
           \frac{1}{2} & \frac{1}{2} \\
           0 & \frac{1}{2}
         \end{pmatrix} x, \quad
\Psi_{k,2}=\begin{pmatrix}
           \frac{1}{2} & 0 \\
           \frac{1}{2} & \frac{1}{2}
         \end{pmatrix} x.
$$
Then $\card\ E=n$. See Figure~\ref{fig1}.

\begin{figure}[h]\label{fig1}
\centering
\includegraphics[width=\textwidth]{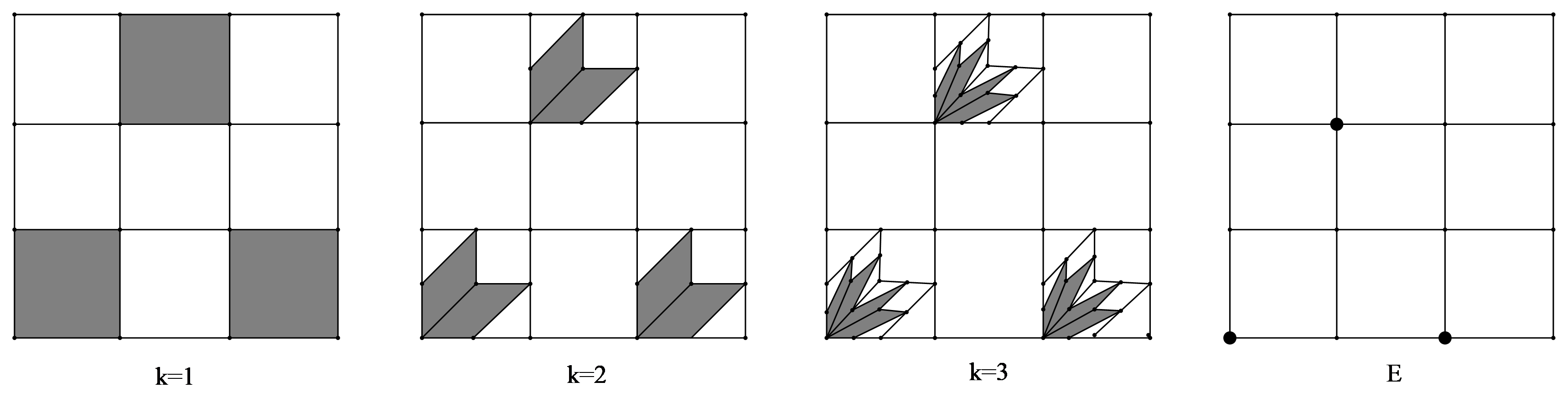}
\caption{Self-affine set contains only 3 elements, where $n=3$ and $\mathcal{D}=\{\binom{0}{0},   \binom{ 1 }{ 2 }, \binom{ 2 }{ 0 }   \}$. }
\end{figure}
\end{exmp}

Next, we give an example that $s^*$ and $s_A$ are the sharp bounds for the upper box dimension and the  Hausdorff dimension of non-autonomous affine sets.
\begin{exmp}\label{exCVie}
Suppose that $d=2$ and $J=[0,1]^2$. For each integer $k>0$,
$$
n_k=\left\{\begin{array}{cl}
                       3, & \qquad \textit{ if $k=1$ or } 3 \cdot 2^{2n}<k\leq 3\cdot 2^{2n+1} \textit{ for some integer } n\geq0\\
                       9, & \qquad \textit{ if } 3\cdot 2^{2n-1}<k\leq 3\cdot 2^{2n} \textit{ for some integer } n\geq 0.
                     \end{array}
\right.
$$
Let $\D_k\in \{0,1,\ldots,8\}\times \{0,1,2\}$ and $|\D_k|=n_k$. For $1\leq j\leq n_k$, $T_{k,j}=\diag\{\frac{1}{9},\frac{1}{3}\}$, and
$$
\Psi_{k,j}(x)=T_{k,j}(x+b_{k,j}),
$$
where $b_{k,j}\in\D_k$. Let $s_k$ be the number of $j$ such that $(i,j)\in\D_k$ for some $i$. Suppose that for each $k>0$, $s_k=2$.
Then for $\bu\in\Sigma^k$,
$$
\phi^s(T_\bu)=\left\{
  \begin{array}{lcc}
  2^{-sk}, & & 0<s<1;\\
  2^{(1-2s)k}, & & 1\leq s\leq 2.
  \end{array} \right.
$$
By simple calculation, we have that
$$
s^*=\frac{4}{3},\qquad s_A=\frac{7}{6}.
$$
But
$$
\overline{\dim}_{\rm B}  E = \frac{5\log 3+3\log 2}{6\log 3}<s^*,
$$
and
$$
\dimh E\leq \underline{\dim}_{\rm B} E = \frac{5\log 3+3\log 2}{6\log 3}<s_A.
$$
\end{exmp}

\end{document}